\newtheorem{theorem}{Theorem}[section]
\newtheorem{proposition}[theorem]{Proposition}
\newtheorem{lemma}[theorem]{Lemma}
\newtheorem{corollary}[theorem]{Corollary}
\theoremstyle{definition}
\newtheorem*{remark}{Remark}
\newtheorem*{notation}{Notation}
\newtheorem*{acknowledgment}{Acknowledgment}
\numberwithin{equation}{section}
\newcommand{\Z}{\mathbb{Z}}
\newcommand{\Q}{\mathbb{Q}}
\newcommand{\R}{\mathbb{R}}
\newcommand{\C}{\mathbb{C}}
\newcommand{\A}{\mathbb{A}}
\newcommand{\F}{\mathbb{F}}
\newcommand{\G}{\varGamma}
\newcommand{\g}{\gamma}
\newcommand{\w}{\omega}
\newcommand{\la}{\lambda}
\newcommand{\ve}{\varepsilon}
\DeclareMathOperator{\Tr}{Tr}
\DeclareMathOperator{\re}{Re}
\DeclareMathOperator{\im}{Im}
\DeclareMathOperator{\Sym}{Sym}
\DeclareMathOperator{\diag}{diag}
\DeclareMathOperator{\rank}{rank}
\DeclareMathOperator{\ord}{ord}
\newcommand{\Sp}{\mathit{Sp}}
\newcommand{\GL}{\mathit{GL}}
\newcommand{\SL}{\mathit{SL}}
\newcommand{\GSp}{\mathit{GSp}}
\newcommand{\bff}{\mathbf{f}}
\newcommand{\sikakubox}{\leavevmode
  \hbox to.77778em{%
  \hfil\vrule
  \vbox to.775em{\hrule width.25em\vfil\hrule}%
  \vrule\hfil}}
\newenvironment{smatrix}{\left( \begin{smallmatrix}}{\end{smallmatrix} \right)}
\newcommand{\tp}{{\vrule width0pt height8pt}^t}
\newcommand{\ha}{\mathbb{H}}
\renewcommand{\F}{\mathbb{F}}
\newcommand{\epi}{\mathbf{e}}
\newcommand{\1}{\mathbf{1}}
\newcommand{\bfi}{\mathbf{i}}
\newcommand{\itO}{\mathit{O}}
\title[functional equations of Siegel Eisenstein series of level $p$]{On the functional equations of Siegel Eisenstein series of an odd prime level $p$}
\author{Keiichi Gunji}
\date{}
\email{keiichi.gunji@it-chiba.ac.jp}
\address{Department of Mathematics, Chiba Institute of Technology,2-1-1 Shibazono, Narashino, Chiba, 275-0023, Japan}
\begin{document}
\maketitle

\begin{abstract} Let $p$ be an odd prime. In this paper we write down the functional equations of Siegel Eisenstein series of degree $n$, level $p$ with quadratic or trivial characters. First we study the $U(p)$ action in the space of Siegel Eisenstein series to get $U(p)$-eigen functions. Secondly we show that the functional equations for $U(p)$-eigen Eisenstein series are quite simple and easy to write down. Consequently the matrix that represents functional equations of Siegel Eisenstein series are given by the products of simple matrices.
\end{abstract}

\section{Introduction} \label{section_intro}

Functional equations are one of the most important result in the theory of Eisenstein series. The general theories for functional equations in terms of the automorphic representations are already established by Langlands (\cite{La}).

In terms of the classical notation, first Kaufold got the result in degree $2$ case(\cite{Kau}). Kalinin gave the functional equation for any degree case, just rewriting the Langlands or Harish-Chandra's results in classical terminologies (\cite{Kal}). 
After that Mizumoto (\cite{Miz}) gave another proof for functional equations by computing the constant term of the Fourier expansions and unfolding methods.

We shall introduce their results. Let $\G^n= \Sp(n,\Z)$ be the symplectic modular group of rank $n$, matrix-size $2n$ and $P_{0,n}(\Z)$ be its subgroup consisting of the elements whose lower left $(n \times n)$-blocks are zero matrices. For $s \in \C$, an even integer $k$ and $Z = X + \sqrt{-1}Y \in \ha_n$ (Siegel upper half space) we define
\[ E_k^{n}(Z,s) = \det(Y)^{s-k/2} \sum_{\begin{smatrix} * & * \\ C & D \end{smatrix} \in P_{0,n}(\Z) \backslash \G^n} \det(CZ+D)^{-k} | \det(CZ+D)|^{-2s+k}. \]
The right-hand side converges absolutely for $ \re(2s) > n+1$ and it continues meromorphically to whole $s$-plane. 
We put
\[ \mathbb{E}_k^n(Z,s) := \frac{\Gamma_n \left(s+\dfrac{k}{2} \right)}{\Gamma_n(s)} \xi(2s) \prod_{j=1}^{[n/2]} \xi(4s-2j) E_k^{n}(Z,s), \]
here $\Gamma_n(s) = \pi^{n(n-1)/4} \prod_{i=0}^{n-1} \Gamma(s-i/2)$ and $\xi(s) = \pi^{-s/2} \Gamma(s/2) \zeta(s)$ is the completed Riemann zeta function, that satisfies $\xi(s) = \xi(1-s)$. 

\begin{theorem}[{\cite[Theorem 2]{Kal}, \cite[Corollary 6.6]{Miz}}] \label{thm_FE_Siegel_Eisenstein}
Let $\kappa_n = (n+1)/2$. Under the above notations, the functional equation
\[ \mathbb{E}_k^n(Z,s) = \mathbb{E}_k^n ( Z, \kappa_n-s ) \]
holds.
\end{theorem}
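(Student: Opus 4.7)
The plan is to follow Mizumoto's Fourier-expansion strategy rather than the representation-theoretic route via Langlands. Concretely, I would establish the functional equation one Fourier coefficient at a time, showing that when combined with the completion factor
\[
  \frac{\Gamma_n(s+k/2)}{\Gamma_n(s)}\, \xi(2s) \prod_{j=1}^{[n/2]} \xi(4s-2j),
\]
each Fourier coefficient of $E_k^n(Z,s)$ is interchanged with the corresponding coefficient at $\kappa_n - s$.

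The first step is to write the Fourier expansion
\[
  E_k^n(Z,s) = \sum_{T} a(T,Y,s)\, e^{2\pi \sqrt{-1}\Tr(TX)},
\]
with $T$ running over half-integral symmetric matrices of size $n$. Unfolding the sum over $P_{0,n}(\Z)\backslash \G^n$ via the standard Bruhat decomposition gives, for non-degenerate $T$, a factorisation
\[
  a(T,Y,s) = (\text{explicit Gamma factor}) \cdot \eta(T,Y,s) \cdot \prod_{p} b_p(T,s),
\]
where $\eta(T,Y,s)$ is a confluent hypergeometric function of matrix argument in the sense of Shimura and $b_p(T,s)$ is the $p$-adic Siegel series encoding the local representation densities of $T$. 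For degenerate $T$ of rank $r<n$, sorting the sum by rank gives a similar decomposition but with the Siegel series of rank $r$ and additional ratios of Riemann zeta values arising from summation over the kernel lattice.

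Next I would import the local functional equations. On the archimedean side, Shimura's theory of confluent hypergeometric functions of matrix argument supplies the identity that converts $\Gamma_n(s+k/2)\,\eta(T,Y,s)/\Gamma_n(s)$ into a function symmetric under $s \mapsto \kappa_n - s$. On the non-archimedean side, the Kitaoka--Katsurada functional equation for each $b_p(T,s)$, once assembled into a global Euler product, produces a functional equation whose zeta-type denominators are precisely absorbed by $\xi(2s)\prod_{j=1}^{[n/2]} \xi(4s-2j)$. Combining the two gives the desired symmetry for every non-degenerate Fourier coefficient.

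The main obstacle, to my mind, is the bookkeeping for the degenerate Fourier coefficients. When $T$ has rank $r<n$, the local functional equation naturally pivots around $\kappa_r$ rather than $\kappa_n$, so one has to verify that the auxiliary $\xi$-factors produced by the degeneration absorb the mismatch in the reflection point; this reduces to a combinatorial identity among completed zeta values. A cleaner alternative is to invoke Langlands' general theory: the completion factor appears as the normalisation of the global intertwining operator for induction from the Siegel parabolic, and the functional equation follows from the Gindikin--Karpelevich formula at each finite place together with Shimura's computation at the archimedean place, with the degenerate contributions handled automatically by the spectral picture.
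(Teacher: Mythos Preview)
The paper does not actually prove this theorem: it is stated in the introduction as a known result, with citations to Kalinin and to Mizumoto, and is then used as a black box (for instance, in the proof of Lemma~\ref{lem_FE_rough_form}). So there is no ``paper's own proof'' to compare against beyond the references.

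That said, your outline is essentially a sketch of Mizumoto's proof in \cite{Miz}, which is one of the two cited sources: he computes the Fourier expansion, separates out Shimura's confluent hypergeometric function and the Siegel series, and then combines the archimedean functional equation from \cite{Shi1} with the local functional equations of the Siegel series to obtain the symmetry $s\mapsto \kappa_n-s$ coefficient by coefficient. Your remark about the degenerate terms is also accurate: Mizumoto handles the rank-$r$ contributions via the Epstein-type zeta functions $\zeta_r^{(n)}(Y,s)$ (the same objects that appear in this paper's Lemma~\ref{lem_FE_epstein_zeta}), and the bookkeeping you anticipate is exactly the content of his Lemmas leading up to Corollary~6.6. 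The alternative you mention via Langlands' intertwining operator and the Gindikin--Karpelevich formula is Kalinin's route in \cite{Kal}. So your proposal correctly recovers both cited approaches; there is no gap, but there is also nothing in the present paper to compare it to.
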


The aim of this paper is to consider functional equations for level $p$ case, with an odd prime number $p$. Let
\[ \G_0^n(p) = \left\{  \begin{pmatrix} A & B \\ C  & D \end{pmatrix} \in \Sp(n,\Z) \biggm| C \equiv 0 \bmod p \right\}. \]
As is well-known that the space of Siegel Eisenstein series of degree $n$ with respect to $\G^n_0(p)$ is $(n+1)$-dimensional, thus our functional equations are written in vector valued form. In elliptic modular case, i.e.\ the case of $n=1$, the explicit forms of functional equations are already known (cf.\ \cite[Theorem 4.4.2]{Kub}), but in the higher degree case it is predicted to be quite complicated.

Our basic strategy is investigating  $U(p)$-operator on the space of Siegel Eisenstein series. We explain it more precisely. Let $\psi = \chi_0$ or $\chi_p$ be the Dirichlet character modulo $p$, here $\chi_0$ stands for the trivial character and $\chi_p$ stands for the quadratic character. Let $k$ be an integer such that $\psi(-1) = (-1)^k$.
We can take $\{ w_j \}_{0 \le j \le n}$ as a representative set of the double co-set $P_{0,n}(\Z) \backslash \G^n / \G^n_0(p)$, where $w_j$ is given in (\ref{eq_def_w_j}). Then the Siegel Eisenstein series corresponding to $w_j$ is defined by
\[ E^n_{k,\psi}(w_j;Z,s) = \det(Y)^{s-k/2}  \sum_{\g} \psi^*(\g) \det(CZ+D)^{-k} |\det (CZ+D)|^{-2s+k},
\]
here $\g = \begin{smatrix} A & B \\ C & D \end{smatrix}$ runs through the set $P_{0,n}(\Z) \backslash P_{0,n}(\Z) w_j  \G_0^n(p)$, $\psi^*$ is defined in (\ref{eq_psi*}). It also converges for $\re(2s)>n+1$ and continues meromorphically to whole $s$-plane.
We set $\mathcal{E}_{k,s}^n(\G^n_0(p),\psi)$  the $\C$-vector space spanned by $\{ E_{k,\psi}^n(w_j; Z,s) \mid  0 \le j \le n \}$. Then the eigenvalues of  $U(p)$-operator on $\mathcal{E}^n_{k,s}(\G^n_0(p),\psi)$ are given by $\{ p^{l(s,j)} \mid 0 \le j \le n \}$ with $l(s,j) = n(k/2-s) + 2sj - j(j+1)/2$ (Corollary \ref{cor_eigenvalues_U(p)}). Let $\kappa_n = (n+1)/2$. By an easy calculation one can find
\[ l(s, j) = l(\kappa_n-s, n-j), \]
thus we see the following observation. Let $E^{n,(\nu)}_{k,\psi}(Z,s) \in \mathcal{E}^n_{k,s}(\G^n_0(p),\psi)$ be the suitably normalized $U(p)$-eigenfunction with the eigenvalue $p^{l(s,\nu)}$; it is defined in Proposition \ref{prop_eigen_Eisenstein_series}, explicitly. Then we may expect the existence of the functional equation of the form
\begin{equation*} E^{n,(\nu)}_{n,\psi}(Z,s) = C(s) E^{n,(n-\nu)}_{n,\psi} \left(Z, \kappa_n-s \right) 
\end{equation*}
with some function $C(s)$ in $s$. In the case of $\psi = \chi_0$, it actually follows from Theorem \ref{thm_FE_Siegel_Eisenstein}. On the other hand the case of $\psi= \chi_p$, we can prove it by using the general theory of Langlands. Now we state our  main theorem. Let $\delta_p = 0$ or $1$ according as $p \equiv 1$ or $p \equiv 3 \bmod 4$. We define
\begin{align*} \mathbb{E}_{k,\chi_0}^{n,(\nu)}(Z,s) &  = \g_\nu(\chi_0,s) \dfrac{\Gamma_n\left(s + \dfrac{k}{2} \right)}{\Gamma_n(s)} \, \xi(2s) \prod_{j=1}^{[n/2]} \xi(4s-2j) \,  E_{k,\chi_0}^{n,(\nu)}(Z,s), \\
 \mathbb{E}_{k,\chi_p}^{n,(\nu)}(Z,s)  & = \g_\nu(\chi_p,s) \dfrac{\Gamma_n\left(s + \dfrac{k}{2} \right)}{\Gamma_n \left( s + \dfrac{\delta_p}{2} \right)} \, \xi(\chi_p, 2s) \prod_{j=1}^{[n/2]} \xi(4s-2j) \,  E_{k,\chi_p}^{n,(\nu)}(Z,s),
\end{align*}
here,
\[  \xi(\chi_p,s) = \left( \frac{p}{\pi} \right)^{(s+\delta_p)/2} \Gamma \left( \frac{s+\delta_p}{2} \right) L(\chi_p,s) \]
is the completed Dirichlet $L$-function, that satisfies $\xi(\chi_p,1-s) = \xi(\chi_p,s)$; the factor $\gamma_\nu(\psi,s)$ is defined by
\begin{align*} \g_\nu(\chi_0,s) & =  \frac{1-p^{-2s}}{1-p^{\nu-2s}} \prod_{i=1}^{[\nu/2]} \frac{1-p^{2i-4s}}{1-p^{2\nu+1 -2i-4s}}, \\
 \g_\nu(\chi_p,s) & =  (\ve_p p^{-1/2})^\nu  \prod_{i=1}^{[\nu/2]} \frac{1-p^{2i-4s}}{1-p^{2\nu+1 -2i-4s}},
\end{align*}
with $\ve_p = 1$ or $\sqrt{-1}$ according as $p \equiv 1$ or $p \equiv 3 \bmod 4$.

\begin{theorem}[{Theorem \ref{thm_FE_eigen_function_chi_0}, Theorem \ref{thm_FE_eigen_function_chi_p}}]
We have the functional equation
\[ \mathbb{E}^{n,(\nu)}_{k,\psi}(Z,s) = \mathbb{E}^{n,(n-\nu)}_{k,\psi}(Z, \kappa_n-s). \]

\end{theorem}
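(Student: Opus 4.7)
My plan is to treat the $\chi_0$ and $\chi_p$ cases separately, exploiting the fact that at trivial character the whole space $\mathcal{E}^n_{k,s}(\G^n_0(p),\chi_0)$ consists of oldforms, whereas at $\chi_p$ the Eisenstein series are genuine newforms at $p$.

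For $\psi = \chi_0$, I would expand each double-coset Eisenstein series $E^n_{k,\chi_0}(w_j;Z,s)$ as an explicit finite linear combination of level-$1$ Eisenstein series $E^n_k$ pulled back through slash operators by rational symplectic matrices supported at $p$ (essentially the scaling $Z \mapsto pZ$ restricted to various sub-blocks). Combined with the $U(p)$-diagonalisation of Corollary \ref{cor_eigenvalues_U(p)}, this writes $E^{n,(\nu)}_{k,\chi_0}(Z,s)$ as an explicit such combination. Applying Theorem \ref{thm_FE_Siegel_Eisenstein} term by term substitutes $s \mapsto \kappa_n - s$; since the eigenvalue of $U(p)$ on the transformed sum equals $p^{l(\kappa_n - s, n-\nu)} = p^{l(s,\nu)}$, the result must lie in the line spanned by $E^{n,(n-\nu)}_{k,\chi_0}(Z,\kappa_n - s)$. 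The remaining step is to check, by a direct computation, that the scalar obtained after collecting the determinant factors from the slash operators and the $\Gamma_n$, $\xi$ prefactors of the level-$1$ completed Eisenstein series equals $\g_\nu(\chi_0,s)$.

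For $\psi = \chi_p$, I would invoke Langlands' general theory (\cite{La}). Realize $\mathcal{E}^n_{k,s}(\G^n_0(p),\chi_p)$ adelically as Eisenstein series attached to the degenerate principal series of $\Sp(2n,\A)$ induced from the Siegel parabolic by $\chi_p |\det|^s$, taking a flat section fixed by $K^p_0(p)$ at $p$, spherical at all other finite places, and of the prescribed $K_\infty$-type at infinity. Langlands' functional equation then equates the Eisenstein series at $s$ with the one at $\kappa_n - s$ built from the image of the section under the global intertwining operator $M(s)$, which factorises into local pieces. The archimedean factor produces the ratio $\Gamma_n(s+k/2)/\Gamma_n(s+\delta_p/2)$; at the unramified finite places Gindikin--Karpelevich yields $\xi(\chi_p,2s) \prod_{j=1}^{[n/2]} \xi(4s-2j)$; at $p$, an explicit computation on a basis of $K^p_0(p)$-fixed vectors that diagonalises $U(p)$ should produce, on the $\nu$-th eigenvector, the scalar $(\ve_p p^{-1/2})^\nu \prod_{i=1}^{[\nu/2]}(1-p^{2i-4s})/(1-p^{2\nu+1-2i-4s}) = \g_\nu(\chi_p,s)$.

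The decisive step is this local intertwining calculation at $p$ in the $\chi_p$ case: one must diagonalise the local $U(p)$-operator on the $K^p_0(p)$-fixed subspace of the ramified degenerate principal series and evaluate the intertwining integral on each eigenvector, with the factor $\ve_p$ emerging from an orbital integral over the opposite unipotent radical twisted by the ramified quadratic character $\chi_p$, i.e.\ a Gauss sum. In the $\chi_0$ case the analogous bookkeeping is elementary but tedious, since one has to isolate the rational $p$-factor $(1-p^{-2s})/(1-p^{\nu-2s})$ from the slash-operator expansion. Once these local computations are in place, the assembly into the stated functional equation is routine.
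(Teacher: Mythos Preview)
Your two-step structure---first establish the existence of a functional equation of the form $E^{n,(\nu)}_{k,\psi}(Z,s)=C(s)\,E^{n,(n-\nu)}_{k,\psi}(Z,\kappa_n-s)$ via the level-$1$ functional equation (for $\chi_0$) or Langlands' theory (for $\chi_p$), then determine the scalar $C(s)$---matches the paper exactly. Where you diverge is in the second step.

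You propose to pin down $C(s)$ by tracking it through the oldform expansion (for $\chi_0$) or by computing all local intertwining factors explicitly, including the ramified one at $p$ (for $\chi_p$). The paper instead determines $C(s)$ by comparing \emph{constant terms of the Fourier expansion}. By Lemma~\ref{lemm_const_term_eigen_function} the constant term of $E^{n,(\nu)}_{k,\psi}(Z,s)$ is simply $\mathscr{S}^\nu_\nu(\psi,0,2s)\,C_\nu(Y,k,s)$, an explicit product of $\Gamma$-factors, Dirichlet $L$-values, and the Epstein-type zeta $\zeta^{(n)}_\nu(Y,\cdot)$. The paper then checks directly (Proposition~\ref{prop_FE_constant_term_chi_0} and its $\chi_p$ analogue) that the completed constant term $\Lambda^{n,(\nu)}_{k,\psi}(Y,s)$ satisfies $\Lambda^{n,(\nu)}_{k,\psi}(Y,s)=\Lambda^{n,(n-\nu)}_{k,\psi}(Y,\kappa_n-s)$, using only the classical functional equations of $\xi$, $\xi(\chi_p,\cdot)$, Mizumoto's functional equation for $\zeta^{(n)}_\nu$ (Lemma~\ref{lem_FE_epstein_zeta}), and a $\Gamma$-identity (Lemma~\ref{lem_FE_Gamma_function}). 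This fixes $C(s)$ without ever touching the intertwining integral at $p$.

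The practical difference is that the paper's route completely bypasses the computation you identify as ``decisive'': the ramified local intertwining operator on $U(p)$-eigenvectors, with its Gauss-sum contribution. That computation is certainly feasible in principle, but the constant-term comparison reuses the Siegel-series machinery already set up in Section~\ref{section_Fourier_exp} and reduces everything to known one-variable functional equations. Your approach would give an independent derivation of the $p$-factors $\gamma_\nu(\psi,s)$ from the representation-theoretic side, which is of intrinsic interest, but it is more work than the paper needs.
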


In terms of the basis $\{E^n_{k,\psi}(w_\nu;Z,s) \}$ of $\mathcal{E}^n_{k,s}(\G^n_0(p),\psi)$, the functional equations are written as follows. Set
\begin{align*} \mathbb{E}_{k,\chi_0}^{n}(w_\nu;Z,s) &  = \dfrac{\Gamma_n\left(s + \dfrac{k}{2} \right)}{\Gamma_n(s)} \, \xi(2s) \prod_{j=1}^{[n/2]} \xi(4s-2j) \,  E_{k,\chi_0}^{n}(w_\nu;Z,s), \\
 \mathbb{E}_{k,\chi_p}^{n}(w_\nu; Z,s)  & = \dfrac{\Gamma_n\left(s + \dfrac{k}{2} \right)}{\Gamma_n \left( s + \dfrac{\delta_p}{2} \right)} \, \xi(\chi_p,2s) \prod_{j=1}^{[n/2]} \xi(4s-2j) \,  E_{k,\chi_p}^{n}(w_\nu;Z,s).
\end{align*}
Let $B_\psi(s)$ be the representative matrix of the eigenfunctions $E^{n,(\nu)}_{k,\psi}(Z,s)$ using the basis $E^{n}_{k,\psi}(w_j;Z,s)$, that is defined in (\ref{eq_b_psi}). Note that $B_\psi(s)$ is strict upper triangular.
We denote by $T_\psi(s) = ( t_{ij})_{0 \le i,j \le n}$ the anti-diagonal matrix of size $n+1$, whose $(n-\nu,\nu)$-component $(0 \le \nu \le n)$ is given by $\g_\nu(\psi,s) \g_{n-\nu}(\psi, \kappa_n-s)^{-1}$. 
\begin{theorem}[Theorem \ref{thm_FE_chi_0_standard_basis}, Theorem \ref{thm_FE_chi_p_standard_basis}]
We have the functional equation
\[ \begin{pmatrix} \mathbb{E}^{n}_{k,\psi}(w_0;Z,\kappa_n-s) \\ \vdots \\ \mathbb{E}^{n}_{k,\psi}(w_n;Z,\kappa_n-s) \end{pmatrix} = B_\psi(\kappa_n-s)^{-1}T_\psi(s) B_\psi(s) \begin{pmatrix} \mathbb{E}^{n}_{k,\psi}(w_0;Z,s) \\ \vdots \\ \mathbb{E}^{n}_{k,\psi}(w_n;Z,s) \end{pmatrix}.\]
\end{theorem}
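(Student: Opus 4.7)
The plan is to deduce the standard-basis functional equation from the already-proved functional equation of the $U(p)$-eigenfunctions (Theorems \ref{thm_FE_eigen_function_chi_0} and \ref{thm_FE_eigen_function_chi_p}), by transferring it through the change-of-basis matrix $B_\psi(s)$.

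First I would rewrite the definition (\ref{eq_b_psi}) in terms of the completed series. Multiplying the expansion
\[
E^{n,(\nu)}_{k,\psi}(Z,s)=\sum_{j=0}^n (B_\psi(s))_{\nu,j}\,E^n_{k,\psi}(w_j;Z,s)
\]
by the Gamma quotient and $\xi$-product common to the definitions of $\mathbb{E}^n_{k,\psi}(w_j;Z,s)$ and $\mathbb{E}^{n,(\nu)}_{k,\psi}(Z,s)$ (the latter carries the extra scalar $\g_\nu(\psi,s)$), one obtains
\[
\mathbb{E}^{n,(\nu)}_{k,\psi}(Z,s)=\g_\nu(\psi,s)\sum_{j=0}^n (B_\psi(s))_{\nu,j}\,\mathbb{E}^n_{k,\psi}(w_j;Z,s).
\]
Writing $D_\psi(s)=\diag(\g_0(\psi,s),\dots,\g_n(\psi,s))$ and denoting by $\mathbf{v}(s)$ and $\mathbf{E}(s)$ the column vectors of the $\mathbb{E}^n_{k,\psi}(w_j;Z,s)$ and the $\mathbb{E}^{n,(\nu)}_{k,\psi}(Z,s)$ respectively, this reads $\mathbf{E}(s)=D_\psi(s)B_\psi(s)\mathbf{v}(s)$.

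Next I would apply the eigenfunction functional equation $\mathbb{E}^{n,(\nu)}_{k,\psi}(Z,s)=\mathbb{E}^{n,(n-\nu)}_{k,\psi}(Z,\kappa_n-s)$, which in vector form is simply $\mathbf{E}(s)=J\mathbf{E}(\kappa_n-s)$ where $J$ is the $(n+1)\times(n+1)$ anti-diagonal permutation matrix. Combining with the previous identity at both $s$ and $\kappa_n-s$ and solving for $\mathbf{v}(\kappa_n-s)$ yields
\[
\mathbf{v}(\kappa_n-s)=B_\psi(\kappa_n-s)^{-1}\bigl(D_\psi(\kappa_n-s)^{-1}JD_\psi(s)\bigr)B_\psi(s)\,\mathbf{v}(s).
\]
An entry-by-entry computation (using $J_{i,j}=\delta_{i+j,n}$) shows that $D_\psi(\kappa_n-s)^{-1}JD_\psi(s)$ is anti-diagonal with $(n-\nu,\nu)$-entry $\g_\nu(\psi,s)\g_{n-\nu}(\psi,\kappa_n-s)^{-1}$, which is precisely the definition of $T_\psi(s)$.

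The real content has already been done in the proofs of Theorems \ref{thm_FE_eigen_function_chi_0} and \ref{thm_FE_eigen_function_chi_p}, so what remains here is purely formal. The only point that requires mention is the invertibility of $B_\psi(s)$, which is immediate from its upper triangular shape together with the nonzero diagonal coming from the normalization chosen in Proposition \ref{prop_eigen_Eisenstein_series}. I therefore expect no genuine obstacle; the task is simply to keep the factors $\g_\nu(\psi,s)$ in the correct place so that the diagonal twist of the anti-diagonal $J$ by $D_\psi$ produces the matrix $T_\psi(s)$ stated in the theorem.
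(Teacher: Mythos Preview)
Your proposal is correct and matches the paper's approach: the paper presents Theorems~\ref{thm_FE_chi_0_standard_basis} and~\ref{thm_FE_chi_p_standard_basis} as immediate corollaries of the eigenfunction functional equations (Theorems~\ref{thm_FE_eigen_function_chi_0} and~\ref{thm_FE_eigen_function_chi_p}) via the change-of-basis relation in Proposition~\ref{prop_eigen_Eisenstein_series}, and your argument simply spells out that linear algebra. The identification $D_\psi(\kappa_n-s)^{-1}JD_\psi(s)=T_\psi(s)$ is exactly how $T_\psi(s)$ is defined, so nothing further is needed.
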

Thus we can write the representative matrix of the functional equations by the product of upper triangular matrices and the anti-diagonal matrix.

We explain the contents of this paper. In section \ref{section_U(p)-operator} we study the action of $U(p)$-operator on the space of Siegel Eisenstein series. The results are already known by Walling (\cite{Wal}), however we use the different methods from \cite{Wal}. We can regard the space of modular forms as functions in $\GSp(\A)$ in a standard manner, then the problem becomes to study the action of  $U(p)$-operator on the space of degenerate principal series. This action is purely $p$-local and simple calculations induce our results. In section \ref{section_Fourier_exp} we study the Fourier expansions of Siegel Eisenstein series. The constant teams play crucial roles in studying functional equations. In section \ref{section_functional_eq} we prove our main results.
Finally in section \ref{section_example} we compute the example in the case of degree $2$ with the quadratic character.

\begin{acknowledgment}
The author would like to express his gratitude to Professor Masao Tsuzuki in Sophia University and Professor Tamotsu Ikeda in Kyoto University for their useful comments and a lot of discussions about this topics. 
\end{acknowledgment}

\begin{notation} We denote by $\1_n$ the identity matrix of size $n$. We put $w_n = \begin{pmatrix} 0 & -\1_n \\ \1_n & 0 \end{pmatrix}$. For a commutative ring $R$, we define
\begin{align*}  \GSp(n,R)  & = \{ g \in \GL(2n,R) \mid \tp \! g w_n g = \mu(g) w_n,\ \mu(g) \in R^\times \}, \\
  \Sp(n,R) & = \{ g \in \GSp(n,R) \mid \mu(g) = 1 \}.
\end{align*}
For $g \in \GSp(n,R)$, $(n \times n)$-matrices $A_g$, $B_g$, $C_g$ and $D_g$ are defined by $g = \begin{pmatrix} A_g & B_g \\ C_g & D_g \end{pmatrix}$. We put $\GSp^+(n,\R) = \{ g \in \GSp(n,\R) \mid \mu(g) > 0 \}$.

Let $\ha_n$ be the Siegel upper half space. We usually write the element of $\ha_n$ by $Z = X+\sqrt{-1}Y$. The group $\GSp^+(n,\R)$ acts on $\ha_n$ by 
\[ g \langle Z \rangle = (A_g Z+B_g)(C_g Z + D_g)^{-1} \quad \text{for $Z \in \ha_n$ and $g \in \GSp^+(n,\R)$.} \]
We define the automorphic factor $j(g,Z)$ for $g \in \GSp^+(n,\R)$ and $Z \in \ha_n$ by
\[ j(g,Z) = \mu(g)^{-n/2} \det(C_g Z + D_g), \]
and for the function $f$ on $\ha_n$ and $k \in \Z$ we put
\[ f|_k g(Z) = j(g,Z)^{-k} f( g\langle Z \rangle). \]
Note that for a scalar matrix $a \1_{2n}$ with $a>0$, we have $j(a \1_{2n},Z)=1$. 

Throughout this paper $p$ stands for a fixed odd prime number. An arbitrary prime number is usually denoted by $q$. We put
\[ \G^n_0(p) = \{ g \in \Sp(n,\Z) \mid C_\g \equiv 0 \bmod p \}. \]
For a Dirichlet character $\psi$ modulo $p$, let 
\[A_k(\G_0^n(p),\psi) =  \{ f \colon \ha_n \to \C  \mid  f|_k \g (Z) = \psi(\det D_\g) f(Z) \ \text{for $\g \in \G^n_0(p)$} \}, \]
and for $n \ge 2$
\[ M_k(\G_0^n(p), \psi) = \{ f \in A_k(\G^n_0(p), \psi) \mid \text{$f$ is holomorphic} \}, \]
that is the space of holomorphic Siegel modular forms of degree $n$, level $p$ and character $\psi$. If $n=1$ we also require the holomorphy on cusps. We only consider the case $\psi =\chi_0$ or $\chi_p$, here $\chi_0$ stands for the trivial character modulo $p$ and $\chi_p$ stands for the quadratic character modulo $p$. 

The set of symmetric matrix of size $n$ with entries in $R$ is denoted by $\Sym^n(R)$. We put $\Sym^n(\Z)^*$ the set of half integral matrices. For $A \in M_n(\C)$, we put $\epi(A) = \exp 2 \pi \sqrt{-1} \Tr(A)$.
\end{notation}

\section{The action of $U(p)$-operator on the space of Siegel Eisenstein series} \label{section_U(p)-operator}

In this section we study the action of  $U(p)$-operator on the space of Siegel Eisenstein series.  It is already investigated by Walling (\cite[Section 4]{Wal}), that works for any square-free level.
Here we introduce another method. We only consider the level $p$ case, however we can avoid complicated calculation by lifting modular forms of classical notations to modular forms of adelic notations.

\subsection{$U(p)$-operator in classical notation} \label{subsection_U(p)-classical}

Let $\tau_p = \begin{pmatrix} \1_n & 0 \\ 0 & p \1_n \end{pmatrix}$. 
For $f \in A_k(\G_0^n(p),\psi)$, we define  $U(p)$-operator on the space $A_k(\G^n_0(p),\psi)$ by
\[ \bigl(U(p)f \bigr)(Z)  = p^{nk/2-n(n+1)/2} \sum_{\g \in \G^n_0(p) \backslash \G^n_0(p) \tau_p \G^n_0(p)} \psi(\det A_\g) f|_k \g. \] 
It is well-known that a representative set of $\G^n_0(p) \backslash \G^n_0(p) \tau_p  \G^n_0(p)$ is given by
\[ \left\{ \begin{pmatrix} \1_n & S \\ 0 & p  \1_n \end{pmatrix} \biggm| S \in \Sym^n(\Z/p) \right\}, \]
thus we have
\[ (U(p)f)(Z) = p^{-n(n+1)/2} \sum_{S \in \Sym^n(\Z/p)} f \left( \frac{Z+S}{p} \right). \]
Because of the above, the $U(p)$-action becomes quite simple in terms of Fourier coefficients. 
For $f \in M_k(\G_0^n(p), \psi)$, if the Fourier expansion of $f$ is given by \[ f(Z) = \sum_{T \in \Sym^n(\Z)^*} C(T) \epi(TZ), \]
then
\[ \bigl(f|U(p) \bigr)(Z) = \sum_{T \in \Sym^n(\Z)^*} C(pT) \epi(TZ). \]

Next we define the space of Siegel Eisenstein series. For $0 \le r \le n$, we put 
\begin{equation} w_{n,r} = w_r = \left( \begin{array}{cc|cc} 0_{r} & & -\1_r & \\ & \1_{n-r} & & 0_{n-r} \\ \hline \1_r & & 0_r & \\  & 0_{n-r} & & \1_{n-r} \end{array} \right). \label{eq_def_w_j}
\end{equation}
We set
\begin{align*} P_n(\R) & = \left\{ \begin{pmatrix} \mu A & B \\ 0 & \tp \! A^{-1}  \end{pmatrix} \in \GSp(n,\R) \right\}, \\
P_{0,n}(\R) & = \left\{ \begin{pmatrix} A & B \\ 0 & \tp \! A^{-1}  \end{pmatrix} \in \Sp(n,\R) \right\}.
\end{align*}
Then a representative set of $P_{0,n}(\Z) \backslash \Sp(n,\Z)/\G^n_0(p)$ is given by $\{ w_r \}_{0 \le r \le n}$.

Assume that $\psi = \chi_0$ or $\chi_p$. We define the function $\psi^*$ on  $P_{0,n}(\Z) w_r \G^n_0(p)$ by 
\begin{equation} \psi^*(\g) = \psi(\det D_\eta) \psi(\det D_\kappa), \quad  \g = \eta w_r \kappa, \ \eta \in P_{0,n}(\Z), \ \kappa \in \G_0^n(p). \label{eq_psi*}
\end{equation}
One can check that $\psi^*$ is well-defined.
Let $k$ be an integer such that $\psi(-1) = (-1)^k$. For $Z = X+\sqrt{-1}Y \in \ha_n$, we define the Siegel Eisenstein series of degree $n$, weight $k$, level $p$ with character $\psi$ corresponding to the cusp $w_r$ by
\[ E^n_{k,\psi}(w_r;Z,s) = \det(Y)^{s-k/2}  \sum_{\g} \psi^*(\g) \,  j(\g, Z)^{-k} \, |j(\g,Z)|^{-2s+k},
\]
here $\g$ runs thorough the set $P_{0,n}(\Z) \backslash P_{0,n}(\Z) w_r \G_0^n(p)$.
The right hand side converges when $2\re(s) > n+1$. As is well-known, it continues meromorphically in whole $s$-plane (cf. \cite[Theorem 1]{Kal} and \cite[Proposition 2.1]{Shi2}). Note that if $\psi$ is neither trivial nor quadratic, then $E^n_{k,\psi}(w_r;Z,s)$ is defined only when $r = 0$ or $n$. We have $E^n_{k,\psi}(w_r;Z,s) \in A_k(\G_0^n(p), \psi)$ and $E^n_{k,\psi}(w_r;Z, k/2) \in M_k(\G_0^n(p), \psi)$.
If the the Fourier expansion is given by
\[ E^{n}_{k,\psi}(w_r;Z,s) = \sum_{T \in \Sym^n(\Z)^*} b(T,Y,s) \epi(TX), \]
then
\begin{equation} U(p) E^{n}_{k,\psi} (w_r; Z,s)  = \sum_{T \in \Sym^n(\Z)^*} b(pT, p^{-1}Y,s) \epi(TX). \label{eq_U(p)_with_parameter_s}
\end{equation}

We define the space of Siegel Eisenstein series by
\begin{align*}
\mathcal{E}_{k,s}(\G^n_0(p), \psi) & = \langle E^n_{k,\psi}(w_r;Z,s) \mid 0 \le r \le n \rangle_\C.
\end{align*}
This space is closed under the action of $U(p)$-operator.

\subsection{$U(p)$-operator in adelic notatoion} \label{subsection_U(p)-G(A)}

In order to investigate the action of $U(p)$-operator, it is convenient to consider it in adelic notation. Let $\A$ be the adele ring of $\Q$. Let $G = \GSp_n$ and $G_\infty^+ = \GSp^+(n,\R)$. 
We define the compact group $\prod_{v \le \infty} K_v \subset G(\mathbb{A})$ by
\[ K_v = \begin{cases} \Sp(n,\R) \cap \mathit{O}(2n) & \text{if $v = \infty$,} 
\\ \GSp(n,\Z_q) & \text{if $v = q \ne p$ is a finite place,} \\ 
\{ \g \in \GSp(n,\Z_p) \mid C_\g \equiv 0 \bmod p \} & \text{if $v=p$.} \end{cases}
\]
Though $\prod_v K_v$ is not a maximal compact subgroup of $G(\A)$,  we have a decomposition
\begin{equation} \label{eq_strong_approx_GSp}
G(\A) = G(\Q) G_\infty^+ \prod_{\text{$q$:prime}} K_q. 
\end{equation}
The Dirichlet character $\psi$ modulo $p$ can be lifted to the idele character $\w$ as follows. Let $\A^\times$ be the idele group of $\Q^\times$, then we have
\[ \quad  \A^{\times} = \Q^\times \R^\times_+ \prod_{\text{$q$:prime}} \Z_q^\times, \]
with the set $\R^\times_+$ of positive real numbers. For  $g = \g \delta \alpha \in \A^\times$ with $\g \in \Q^\times$, $\delta \in \R^\times_+$ and $\alpha = (\alpha_q)_q \in \prod \Z_q^\times$ as above decomposition, we define the character $\w \colon \Q^\times \backslash \A^\times \to \C^\times$ by
\[ \w(g) = \psi(\varpi(\alpha_p))^{-1} \]
under the natural surjection $\varpi \colon \Z_p^\times \to (\Z/p)^\times$. We denote by $\w_v$ the $v$-component of $\w$, i.e.\ $\w_v  \colon \Q_v^\times \hookrightarrow \A^\times \xrightarrow{\w} \C^\times$.  Written explicitly, if $q \ne p$ be a finite place then
\[ \w_q( q^m \alpha) = \psi(q)^m \quad \text{with $\alpha \in \Z_q^\times$}, \]
and
\[ \w_p(p^m \alpha) = \psi( \varpi(\alpha) )^{-1} \quad \text{with $\alpha \in \Z_p^\times$.} \]
For the infinite place $v=\infty$,
\[ \w_\infty(d) = \psi \left( \frac{d}{|d|} \right) = \begin{cases} 1 & \text{if $\psi(-1) = 1$} \\ \mathrm{sign}(d) & \text{if $\psi(-1) = -1$}. \end{cases} \]

For $f \in A_k(\G_0^n(p), \psi)$ we define the $\C$-valued function $\Lambda(f)$ on $G(\A)$ as follows. Let $\bfi_n = \sqrt{-1} \cdot \1_n \in \ha_n$. 
For $g = \g g_\infty \kappa \in G(\A)$  as in (\ref{eq_strong_approx_GSp}),  we define
\[ \Lambda(f)(g) = \w_p(\det D_{\kappa_p}) j(g_\infty, \bfi_n)^{-k} f(g_\infty \langle \bfi_n \rangle). \]
This is well-defined since $f \in A_k(\G^n_0(p),\psi)$. Then $\Phi=\Lambda(f)$ satisfies the following properties.
\begin{equation} \label{eq_adelic_modular_form}
\begin{split}
& \Phi(\g g) = \Phi(g)  \quad \text{for} \quad  \g \in G(\Q), \\
& \Phi(g u) =  \Phi(g) j(u, \bfi_n)^{-k} \quad \text{for} \quad  u \in K_\infty, \\
& \Phi(g \kappa) = \Phi(g) \w_p( \det D_{\kappa_p}) \quad  \text{for} \quad  \kappa =(\kappa_q) \in \prod_q K_q.
\end{split}
\end{equation}

Conversely for a function $\Phi \colon G(\A) \to \C$ satisfying (\ref{eq_adelic_modular_form}), we define the function $\Pi(\Phi) \colon \ha_n \to \C$ by 
\[ \Pi(\Phi)(Z) = \Phi(g_\infty) j(g_\infty, \bfi_n)^k, \]
here $g_\infty \in \GSp^+(n,\R)$ is taken so that $g_\infty \langle  \bfi_n \rangle = Z$. Then $\Pi(\Phi) \in A_k(\G_0^n(p), \psi)$, and the maps  $\Lambda$ and $\Pi$ are inverse of each other.

Let $\mathcal{S}(\A)$ be the space of the smooth functions on $G(\A)$, that is $C^\infty$-class in infinite place and locally constants in finite places. We consider the $U(p)$-operator on $\mathcal{S}(\A)$. Put
\[ H_p = K_p  \begin{pmatrix} p \1_n & 0 \\ 0 & \1_n \end{pmatrix} K_p  \]
and we define the function $\widetilde{\w}_p \colon H_p \to \C$ by $\widetilde{\w}_p(x) = \w_p(\det D_x)$. For $\Phi \in \mathcal{S}(\A)$ satisfying (\ref{eq_adelic_modular_form}) we define
\begin{equation}  \label{eq_def_U(p)_adele}
(U(p)\Phi)(g) = p^{nk/2-n(n+1)/2} \int_{H_p} \widetilde{\w}_p^{-1} (x) \Phi(gx) \, d x,
\end{equation}
here $d x$ is the Haar measure of $\GSp(\Q_p)$ such that the volume of $K_p$ equals to $1$. Then $U(p)(\Phi)$ also satisfies (\ref{eq_adelic_modular_form}).
Since we have
\begin{equation*} 
 H_p = \coprod_{u \in \Sym^n(\Z_p/p)} \begin{pmatrix} p \1_n & u \\ 0 & \1_n \end{pmatrix} K_p,
\end{equation*}
we can write
\begin{equation} \label{eq_def_U(p)_adele_another}
 (U(p)\Phi)(g) = p^{nk/2-n(n+1)/2} \sum_{u \in \Sym^n(\Z_p/p)} \Phi \left( g \begin{pmatrix} p \1_n & u \\ 0 & \1_n \end{pmatrix}_p \right).
\end{equation}

\begin{lemma}  \label{lemma_compatible_U(p)}
Let $f \in A_k(\G^n_0(p), \psi)$ be a smooth function. Then we have \[U(p)(\Lambda(f)) = \Lambda( U(p)f). \] 
\end{lemma}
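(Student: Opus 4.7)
The plan is to reduce the claimed identity to a computation on $G^+_\infty$, viewed as sitting inside $G(\A)$ with trivial components at every finite place. Both $U(p)\Lambda(f)$ and $\Lambda(U(p)f)$ satisfy the three transformation laws (\ref{eq_adelic_modular_form}): for the left-hand side this is remarked right after the definition of $U(p)$, and for the right-hand side it follows once one knows $U(p)$ preserves $A_k(\G_0^n(p),\psi)$. By strong approximation (\ref{eq_strong_approx_GSp}), any adelic function satisfying (\ref{eq_adelic_modular_form}) is determined by its restriction to $G^+_\infty$, so I would only need to verify equality there. The right-hand side evaluated at $g_\infty$ is immediate from the definitions:
\[ \Lambda(U(p)f)(g_\infty) = j(g_\infty,\bfi_n)^{-k}\,(U(p)f)(Z) = p^{-n(n+1)/2}\,j(g_\infty,\bfi_n)^{-k} \sum_{S\in\Sym^n(\Z/p)} f\!\left(\frac{Z+S}{p}\right), \]
where $Z = g_\infty\langle\bfi_n\rangle$.

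For the left-hand side I would apply (\ref{eq_def_U(p)_adele_another}) and rewrite each summand via a strong-approximation decomposition. Given $u \in \Sym^n(\Z_p/p)$, pick an integral lift $u' \in \Sym^n(\Z)$; the key witness is
\[ \g_u := \begin{pmatrix} p\1_n & u' \\ 0 & \1_n \end{pmatrix} \in G(\Q), \qquad \g_u^{-1} = \begin{pmatrix} p^{-1}\1_n & -p^{-1}u' \\ 0 & \1_n \end{pmatrix}. \]
Since $p^{-1}\in\Z_q^\times$ for $q\neq p$, the element $\g_u^{-1}$ lies in $K_q$ at every such place; at $p$,
\[ \g_u^{-1}\begin{pmatrix} p\1_n & u \\ 0 & \1_n \end{pmatrix} = \begin{pmatrix} \1_n & p^{-1}(u-u') \\ 0 & \1_n \end{pmatrix} \in K_p, \]
with trivial $D$-block, hence trivial $\w_p$ contribution. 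At the archimedean place, $\g_u^{-1} g_\infty \in G^+_\infty$ (since $\mu(\g_u^{-1}) = p^{-1}>0$), it sends $\bfi_n$ to $(Z-u')/p$, and the cocycle together with $j(\g_u^{-1},Z) = \mu(\g_u^{-1})^{-n/2}\det\1_n = p^{n/2}$ yield $j(\g_u^{-1}g_\infty,\bfi_n) = p^{n/2}\,j(g_\infty,\bfi_n)$.

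Feeding this into the definition of $\Lambda(f)$ gives
\[ \Lambda(f)\!\left(g_\infty\cdot\begin{pmatrix} p\1_n & u \\ 0 & \1_n \end{pmatrix}_{\!p}\right) = p^{-nk/2}\,j(g_\infty,\bfi_n)^{-k}\,f\!\left(\frac{Z-u'}{p}\right), \]
so summing and multiplying by $p^{nk/2 - n(n+1)/2}$ cancels the $p^{-nk/2}$, and the reindexing $S := -u' \bmod p$, combined with the $\Sym^n(\Z)$-translation invariance of $f$, identifies the resulting sum with $\sum_{S\in\Sym^n(\Z/p)} f((Z+S)/p)$, matching the right-hand side. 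I expect the main obstacle to be the global coherence of the decomposition — producing a single $\g_u \in G(\Q)$ whose diagonal embedding simultaneously sits in $K_q$ for $q\neq p$ and whose product with the $p$-th component belongs to $K_p$ — which is exactly what choosing the integer lift $u'$ of $u$ arranges. Everything else is $p$-power bookkeeping through the automorphic factor.
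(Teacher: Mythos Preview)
Your proof is correct and follows essentially the same approach as the paper's: both arguments hinge on factoring out the rational element $\gamma_u = \begin{pmatrix} p\1_n & u' \\ 0 & \1_n \end{pmatrix} \in G(\Q)$ (the paper calls it $\nu_u$) to obtain a strong-approximation decomposition and then reading off the value of $\Lambda(f)$. Your preliminary reduction to $g_\infty$ via the transformation laws (\ref{eq_adelic_modular_form}) is a mild streamlining compared to the paper, which instead works at a general $g = \gamma g_\infty \kappa$ and performs a change of variable $x \mapsto \kappa_p^{-1} x$ in the integral to strip off $\kappa_p$ before doing the same core computation.
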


\begin{proof} For simplicity we write $d(n,k) = nk/2 - n(n+1)/2$.
Take $g = \g g_\infty \kappa \in G(\A)$ with $\g \in G(\Q)$, $g_\infty \in \GSp^+(n,\R)$ and $\kappa = (\kappa_q)_q \in \prod_q K_q$. In (\ref{eq_def_U(p)_adele}) change the variable $x \mapsto \kappa_p^{-1}x$ then
\[ \bigl( U(p)\Lambda(f) \bigr)(g) = p^{d(n,k)} \w_p(\det D_{\kappa_p}) \int_{H_p} \widetilde{\w}_p^{-1}(x) \Lambda(f)(g_\infty \kappa' x) \,dx,
\]
with $\kappa' =(\kappa'_q)_q$ such that $\kappa_q' = \kappa_q$ for $q \ne p$ and $\kappa_p' = \1_{2n}$.  By (\ref{eq_def_U(p)_adele_another}) we have
\[(U(p) \Lambda(f))(g) = p^{d(n,k)} \w_p(\det D_{\kappa_p}) \sum_{u \in \Sym^n(\Z_p/p)} \Lambda(f) \left( g_\infty \kappa' \begin{pmatrix} p \1_n & u \\ 0 & \1_n \end{pmatrix}_p \right). \]
We put $\nu_u = \begin{pmatrix} p \1_n & u \\ 0 & \1_n \end{pmatrix} \in G(\Q)$. Then
\[ g_\infty \kappa' \begin{pmatrix} p \1_n & u \\ 0 & \1_n \end{pmatrix}_p = \nu_u \widetilde{g}_\infty \widetilde{\kappa} \]
with
\[ \widetilde{g}_\infty = 
  (\nu_u^{-1})_\infty g_\infty, \quad \widetilde{\kappa}_p = \1_n. 
\]
If we write $Z = g_\infty \langle  \bfi_n \rangle \in \ha_n$, then\begin{align*}
\Lambda(f)(\nu_u \widetilde{g}_\infty \widetilde{\kappa}) & = j\bigl( ( \nu_u^{-1})_\infty g_\infty,  \bfi_n \bigr)^{-k} f((\nu_u^{-1})_\infty g_\infty \langle  \bfi_n \rangle) \\
& = j( \nu_u^{-1}, Z )^{-k} j(g_\infty,  \bfi_n)^{-k} f \left( \nu_u^{-1} \langle Z \rangle \right),
\end{align*}
Thus
\begin{align*}& \quad  \bigl( U(p)\Lambda(f) \bigr)(g)    = \w_p(\det D_{\kappa_p}) j(g_\infty,  \bfi_n)^{-k} p^{d(n,k)}\sum_{u \in \Sym^n(\Z_p/p)} f|_k \nu_u^{-1}( Z). 
\end{align*}
Since $\nu_u^{-1} = p^{-1} \begin{pmatrix} \1_n & -u \\ 0 & p \1_n \end{pmatrix}$, we have proved our lemma.

\end{proof}

\subsection{Siegel Eisenstein series on $G(\A)$} \label{subsection_Siegel_Eisen_G(A)}

Now we consider the space of Eisenstein series. 
For each place $v$, let $\mathcal{S}_v$ be the set of smooth functions $f \colon G(\Q_v) \to \C$. We set for $s \in \C$
\[I_v^0(s,\w) = \left\{ f \in \mathcal{S}_v \biggm|  f \Bigl( \begin{smatrix} \mu \, \tp \! D^{-1} & * \\ 0 & D \end{smatrix} g \Bigr) = \w_v (\det D) \bigl| \mu^n \det D^{-2} \bigr|_v^{s/2} f(g) \right\}.  \]
We also define
\[I_v^1(s,\w) = \left\{f \in \mathcal{S}_v \biggm|  f \Bigl( \begin{smatrix} \mu \, \tp \! D^{-1} & * \\ 0 & D \end{smatrix} g \Bigr) = \w_v(\mu \det D) \, \bigl| \mu^n \det D^{-2} \bigr|_v^{s/2} f(g) \right\}.  \]
If the idele character $\w$ is  trivial  then $I_v^0(s,\w) = I_v^1(s,\w)$. 

Fix an integer $k$ such that $\psi(-1) = (-1)^k$. For $\nu = 0$ or $1$, we define the space $I_v^{\nu}(s, \w)_{K_v}$ as follows.
If $v = q \ne p$ is a finite place then
\[ I_q^\nu(s, \w)_{K_q} = \{ f \in I_q^\nu(s, \w) \mid f(g \kappa) = f(g) \ \text{for $\kappa \in K_q$} \}. \]
If $v = p$, then
\[ I_p^\nu(s, \w)_{K_p} = \{ f \in I_p^\nu(s, \w) \mid f(g \kappa) = \w_p(\det D_\kappa) f(g)  \ \text{for $\kappa \in K_p$}  \}. \]
Finally if $v = \infty$ then
\[ I^\nu_\infty(s, \w)_{K_\infty} = \{f \in I^\nu_\infty(s, \w) \mid f(g \kappa ) = j(\kappa, \bfi_n)^{-k} f(g) \ \text{for $\kappa \in K_\infty$}  \}. \]

For $v \ne p$, the $\C$-vector space $I^\nu_v(s, \w)_{K_v}$ is 1-dimensional, since in those cases we have $G(\Q_v) = P_n(\Q_v) K_v$. 
We take the basis $f^{(\nu)}_v$ of $I_v^\nu(s, \w)_{K_v}$ so that $f^{(\nu)}_v(\1_{2n}) = 1$.
On the other hand for $v=p$, we have the decomposition
\[ G(\Q_p) = \coprod_{r=0}^n P_n(\Q_p) w_r K_p. \]

\begin{lemma} Assume that $\w$ is the quadratic character. There exists $h \in I_p^\nu(s,\w)_{K_p}$ such that $h(w_i) \ne 0$ if and only if $i \equiv \nu \bmod 2$.

\end{lemma}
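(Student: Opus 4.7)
The plan is to use the Bruhat decomposition $G(\Q_p) = \coprod_{r=0}^n P_n(\Q_p) w_r K_p$ already recorded in the excerpt, so that any $h \in I_p^\nu(s,\w)_{K_p}$ is uniquely determined by its values $h(w_0), \ldots, h(w_n)$. For $h(w_r)$ to be allowed to be nonzero, the left transformation rule under $P_n(\Q_p)$ and the right transformation rule under $K_p$ must agree on the stabilizer $M_r := K_p \cap w_r^{-1} P_n(\Q_p) w_r$. Writing $\alpha := w_r k w_r^{-1} \in P_n(\Q_p)$ for $k \in M_r$, and noting that $w_r \in \Sp(n,\Z)$ together with $k \in \GSp(n,\Z_p)$ forces $\alpha \in \GSp(n,\Z_p)$ (so that both $\mu(\alpha)$ and $\det D_\alpha$ lie in $\Z_p^\times$, killing the $|\cdot|_p^{s/2}$ factor), the compatibility reduces to
\[ \w_p^\nu(\mu(\alpha))\, \w_p(\det D_\alpha) = \w_p(\det D_k). \qquad (\ast) \]

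For the ``only if'' direction I would test $(\ast)$ on the one-parameter family $k_{a,b} := \diag(a\1_n, b\1_n)$ with $a, b \in \Z_p^\times$. Conjugation by $w_r$ swaps the diagonal positions $i \leftrightarrow n+i$ for $1 \le i \le r$, so $\alpha = \diag(b\1_r, a\1_{n-r}, a\1_r, b\1_{n-r})$ and hence $\mu(\alpha) = ab$, $\det D_\alpha = a^r b^{n-r}$, $\det D_{k_{a,b}} = b^n$. Substituting into $(\ast)$ and using $\w_p^2 = 1$, the condition collapses to $\w_p(ab)^{r+\nu} = 1$. Since $ab$ ranges over all of $\Z_p^\times$ and $\w_p$ is the nontrivial quadratic character, this forces $r + \nu$ to be even, and so every $h \in I_p^\nu(s,\w)_{K_p}$ automatically satisfies $h(w_r) = 0$ whenever $r \not\equiv \nu \pmod 2$.

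For the ``if'' direction, for each $r \equiv \nu \pmod 2$ I would construct $h_r \in I_p^\nu(s,\w)_{K_p}$ supported on the single cell $P_n(\Q_p) w_r K_p$ with $h_r(w_r) = 1$; the desired function is then $h := \sum_{r \equiv \nu \pmod 2} h_r$. Well-definedness of $h_r$ amounts to showing that the character
\[ \Phi_r \colon M_r \to \C^\times, \qquad k \longmapsto \w_p^\nu(\mu(\alpha))\, \w_p(\det D_\alpha)\, \w_p(\det D_k)^{-1}, \]
is identically trivial. Since $\w_p$ has conductor $p$, $\Phi_r$ vanishes on the principal congruence subgroup of level $p$ and descends to the finite group $\overline{M_r} = P_n(\F_p) \cap w_r^{-1} P_n(\F_p) w_r$, an intersection of two parabolics of $\GSp(n,\F_p)$ sharing a common maximal torus $T$. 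On the unipotent radical of $\overline{M_r}$, both $\mu$ and the relevant $\det D$ factors are $1$, so it suffices to check $\Phi_r|_T = 1$; extending the computation above from $k_{a,b}$ to the general torus element $k = \diag(a_1, \ldots, a_n, \mu/a_1, \ldots, \mu/a_n)$ yields, after $\w_p^2 = 1$ cancellations, $\Phi_r|_T = \w_p(\mu)^{r+\nu}$, which is trivial precisely when $r + \nu$ is even.

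The main obstacle I anticipate is this last extended torus calculation: one must carefully track which diagonal entries are swapped by conjugation by $w_r$, and verify that the apparent residual dependence on the $a_i$ really does cancel via $\w_p^2 = 1$, leaving only $\w_p(\mu)^{r+\nu}$. A secondary technical point is to justify the Levi-type decomposition of $\overline{M_r}$ into its torus and unipotent radical, which reduces the triviality check on the whole stabilizer to the torus; this is standard for intersections of parabolics containing a common maximal torus.
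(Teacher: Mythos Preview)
Your overall strategy is correct but differs from the paper's, and there is one slip in the sufficiency argument worth flagging. You assert that $\overline{M_r}$ decomposes as its maximal torus times its unipotent radical; this is false. For instance when $r=0$ one has $\overline{M_0}=P_n(\F_p)$, whose Levi is isomorphic to $\GL_n(\F_p)\times\F_p^\times$, strictly larger than $T$. More generally the intersection of two parabolics containing a common maximal torus has a Levi factor that can be a genuinely nonabelian reductive group. What \emph{is} true, and what suffices for your purpose, is that $\overline{M_r}$ is generated by $T(\F_p)$ together with the root subgroups it contains, all of which consist of unipotent elements. Your observation that $\Phi_r$ is trivial on the unipotent radical in fact shows it is trivial on every unipotent element (unipotent implies $\mu=1$ and both relevant $\det D$ blocks are $\equiv 1\pmod p$), so once you replace ``Levi decomposition into torus and unipotent radical'' by ``generation by torus and root subgroups'' the reduction to $T$ goes through. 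Your torus computation itself is correct and yields $\Phi_r|_T=\w_p(\mu)^{r+\nu}$ as claimed.

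The paper bypasses this structure theory entirely. It takes a general $\eta\in P_n(\Z_p)$ with $\kappa:=w_i^{-1}\eta w_i\in K_p$, writes both in $2\times 2$ block form (blocks of sizes $i$ and $n-i$), and reads off the relevant determinants directly. The key step is that the symplectic relation $A_\eta\, \tp \! D_\eta=\mu\1_n$ forces $a_1\, \tp \! d_1\equiv\mu\1_i\pmod p$ on the top-left $i\times i$ blocks, from which one obtains $\w_p(\mu)^i\,\w_p(\det D_\eta)=\w_p(\det D_\kappa)$ for \emph{every} element of the stabilizer in one line; the compatibility condition then reads $\w_p(\mu)^i=\w_p(\mu)^\nu$, giving both directions at once. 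Your approach has the merit of isolating the character-theoretic mechanism and would adapt more readily to other Weyl elements or groups; the paper's explicit block computation is shorter and entirely self-contained.
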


\begin{proof} Suppose that $\eta w_i = w_i \kappa$ with $\eta \in P_n(\Q_p)$ and $\kappa \in K_p$ holds, then all the entries of $\eta$ lie in $\Z_p$. Put $\mu = \mu(\eta) = \mu(\kappa)$. There exisits $h \in I_p^\nu(s,\w)_{K_p}$ such that $h(w_i) \ne 0$, if and only if $\w_p(\mu)^\nu \w_p(\det D_\eta) \, \bigl| \mu^n \det D_\eta^{-2} \bigr|_p^{s/2} = \w(\det D_\kappa)$ holds.
We write
\[ \eta = \left( \begin{array}{cc|cc} a_1 & a_2 & b_1 & b_2 \\ a_3 & a_4 & b_3 & b_4 \\ \hline 0 & 0& d_1 & d_2 \\ 0 & 0 & d_3 & d_4 \end{array} \right), \quad a_1, b_1, d_1 \in M_i(\Z_p). \]
Then
\[ \kappa = \left( \begin{array}{cc|cc} d_1 & 0 & 0 & d_2 \\ b_3 & a_4 & -a_3 & b_4 \\ \hline -b_1 &  -a_2 & a_1 & -b_2 \\ d_3 & 0 & 0 & d_4 \end{array} \right), \quad \begin{pmatrix} -b_1 & -a_2 \\ d_3 & 0 \end{pmatrix} \equiv 0_n \bmod p. \]	
We see that  all of $\det a_1$, $\det a_4$, $\det d_1$ and $\det d_4$ are contained in $\Z_p^\times$ and  $\w_p (\det D_\eta) = \w_p(\det d_1) \w_p( \det d_4)$. 
On the other hand, since $A_\eta \tp \!D_\eta = \mu \1_n$, we have $a_1 \tp d_1 \equiv \mu \1_i \bmod p$, thus  $\w_p(\det a_1 \det d_1) = \w_p(\mu)^i$. Hence
\[ \w_p(\mu)^i \w_p (\det D_\eta) \, \bigl|\mu^n \det D_\eta^{-2} \bigr|^{s/2}_p = \w_p(\det a_1 \det d_4 ) = \w_p(\det D_\kappa), \]
thus the desired condition is $\w_p(\mu)^i = \w_p(\mu)^\nu$. This proves our lemma.
\end{proof}

We put
\[ I_p (s,\w)_{K_p} = \begin{cases} I_p^0(s,\w)_{K_p} \vrule width0pt depth7pt & \text{$\w$ is trivial,} \\  I_p^0(s,\w)_{K_p} \oplus I_p^1(s,\w)_{K_p} & \text{$\w$ is quadratic.} \end{cases} \] 
By the above lemma, we have $\dim I_p(s, \w)_{K_p} = n+1$. We take the basis $\{ h_i \}_{0 \le i \le n}$ of $I_p(s, \w)_{K_p}$ so that the support of $h_i$ is  $P_n(\Q_p) w_i K_p$ and $h_i(w_i) = 1$. Note that if $\w$ is quadratic, then $h_{2i} \in I^0_p(s,\w)_{K_p}$ and $h_{2i-1} \in I^1_p(s,\w)_{K_p}$.

For $\varphi \in I_p^\nu(s, \w)_{K_p}$, we define the function $\bff_\varphi^{(\nu)} \colon G(\A) \to \C$ by
\[ \bff_\varphi^{(\nu)}( (g_v)_v ) = \varphi(g_p) \prod_{v \ne p} f^{(\nu)}_v(g_v), \quad (g_v)_v \in G(\A), \]
then $\bff_\varphi^{(\nu)}$ is left $P_n(\Q)$ invariant. Take $\varphi \in I_p(s,\w)_{K_p}$, if $\w$ is quadratic we write $\varphi= \varphi_0 + \varphi_1$ with $\varphi_\nu \in I^{\nu}_p(\w,s)_{K_p}$. We set 
\[ \bff_\varphi = \begin{cases} \bff^{(0)}_\varphi & \text{$\w$ is trivial,} \\ \bff^{(0)}_{\varphi_0} + \bff^{(1)}_{\varphi_1} & \text{$\w$ is quadratic.}  \end{cases} \]
For $g \in G(\A)$ and $\varphi \in I_p(s,\w)_{K_p}$ the function
\begin{equation} \label{eq_def_Eisenstein_adelic}
E( \bff_\varphi, g, s) = \sum_{\g \in P_n(\Q) \backslash G(\Q)} \bff_\varphi(\g g)
\end{equation}
converges when $\re(s) \gg 0$ and $E(\bff_\varphi,g,s)$ satisfies (\ref{eq_adelic_modular_form}). 

\begin{lemma} Let $\{ h_j \}$ be the basis of $I_{p}(2s,\w)_{K_p}$ defined as above. For $0 \le j \le n$ we have
\[ \Pi \bigl( E(\bff_{h_j}, g, 2s) \bigr)(Z) = E^n_{k,\psi}(w_j;Z,s). \]
\end{lemma}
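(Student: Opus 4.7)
The plan is to evaluate the adelic Eisenstein series at a point $g = g_\infty \in \GSp^+(n,\R)$ placed at the archimedean component, and match the resulting expression term by term with the classical definition. By strong approximation (\ref{eq_strong_approx_GSp}), such an evaluation determines $\Pi(E(\bff_{h_j},\cdot,2s))$ completely, and one has
\[ \Pi\bigl(E(\bff_{h_j},g,2s)\bigr)(Z) \;=\; j(g_\infty,\bfi_n)^k \sum_{\g\in P_n(\Q)\backslash G(\Q)} \bff_{h_j}(\g g_\infty), \qquad Z=g_\infty\langle \bfi_n\rangle. \]

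First I would identify the range of summation. The coset space $P_n(\Q)\backslash G(\Q)$ parametrises maximal isotropic $\Q$-subspaces of $\Q^{2n}$ via $P_n(\Q)g\mapsto L_0 g$ where $L_0=\{0\}\oplus \Q^n$; by the elementary divisor theorem each such subspace has a unique primitive integral representative, giving the identification $P_n(\Q)\backslash G(\Q)\cong P_{0,n}(\Z)\backslash \Sp(n,\Z)$. The support of $h_j$ is $P_n(\Q_p)w_jK_p$, and intersecting this double coset with $\Sp(n,\Z_p)$ yields $P_{0,n}(\Z_p)w_j\G_0^n(p)_p$. Pulling back to $\Sp(n,\Z)$ via the classical double coset decomposition $\Sp(n,\Z)=\coprod_r P_{0,n}(\Z)w_r\G_0^n(p)$, the support restricts the sum to $P_{0,n}(\Z)\backslash P_{0,n}(\Z)w_j\G_0^n(p)$, exactly the index set in the classical series $E^n_{k,\psi}(w_j;Z,s)$.

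Next I would evaluate the pure tensor $\bff_{h_j}(\g g_\infty)$ place by place. For a finite prime $q\ne p$ and $\g\in \Sp(n,\Z)$ one has $\g\in K_q$, so that $f_q^{(\nu)}(\g_q)=1$. At $p$, writing $\g=\eta w_j\kappa$ with $\eta\in P_{0,n}(\Z)$ and $\kappa\in \G_0^n(p)$, the left transformation law of $h_j\in I_p^\nu(2s,\w)_{K_p}$ and its right $K_p$-semi-invariance, together with $h_j(w_j)=1$, $\mu(\eta)=1$, and $\det D_\eta,\det D_\kappa\in\Z_p^\times$, collapse the norm factor $|\mu^n\det D^{-2}|_p^s$ to $1$, leaving only the character: $h_j(\g_p)=\w_p(\det D_\eta)\w_p(\det D_\kappa)=\psi^*(\g)$, using that $\psi$ is self-inverse so that $\w_p(u)=\psi(u\bmod p)$ for $u\in\Z_p^\times$.

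For the archimedean factor, Iwasawa-decompose $\g g_\infty=p\,k$ with $p=\begin{smatrix}\mu\,\tp A^{-1} & B\\ 0 & A\end{smatrix}\in P_n(\R)$ and $k\in K_\infty$, so that $p\langle\bfi_n\rangle=\g\langle Z\rangle$ and $\mu^n/(\det A)^2=\det\im(\g\langle Z\rangle)=\det Y/|j(\g,Z)|^2$, with $\mu=\mu(g_\infty)$. Using the $P_n(\R)$-semi-invariance of $f_\infty^{(\nu)}$, the $K_\infty$-rule $f_\infty^{(\nu)}(\cdot k)=j(k,\bfi_n)^{-k}f_\infty^{(\nu)}(\cdot)$, the cocycle $j(\g g_\infty,\bfi_n)=j(\g,Z)j(g_\infty,\bfi_n)$, and the identity $\mathrm{sign}(\det A)^k(\det A)^k=|\det A|^k$, a direct computation yields
\[ j(g_\infty,\bfi_n)^k\,f_\infty^{(\nu)}(\g g_\infty)\;=\;\det(Y)^{s-k/2}\,j(\g,Z)^{-k}\,|j(\g,Z)|^{-2s+k}. \]
Assembling the three local contributions reproduces $E^n_{k,\psi}(w_j;Z,s)$ exactly. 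The main obstacle is the archimedean matching in this last step: one must carefully track the sign character $\w_\infty$, account for the scalar $\mu(g_\infty)$, and cancel all exponents so that only the expected automorphy factor $j(\g,Z)^{-k}|j(\g,Z)|^{-2s+k}\det(Y)^{s-k/2}$ survives; the global-to-local coset identification in the first step, although classical, likewise requires some care.
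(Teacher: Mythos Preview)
Your proof is correct and follows essentially the same route as the paper: identify $P_n(\Q)\backslash G(\Q)$ with $P_{0,n}(\Z)\backslash \Sp(n,\Z)$, use the support of $h_j$ to cut down to $P_{0,n}(\Z)w_j\G_0^n(p)$, and then compute the local factors (the paper simply invokes $P_n(\Q)\Sp(n,\Z)=G(\Q)$ for the coset identification and tacitly takes $g_\infty\in\Sp(n,\R)$ so that $\mu=1$, whereas you carry a general $\mu>0$ and let it cancel). The archimedean and $p$-adic bookkeeping you outline matches the paper's computation line for line.
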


\begin{proof} Take $g_\infty \in G^+_\infty$ such that $g_\infty \langle \bfi_n \rangle = Z$. Since we have $P_n(\Q) \Sp(n,\Z) = G(\Q)$, one can write
\[ E(\bff_{h_j}, g_\infty, 2s)  = \sum_{\g \in P_{0,n}(\Z) \backslash \Sp(n,\Z)} \bff_{h_j}(\g g_\infty). \]
Put $\nu = 0$ or $1$ according as $j$ is even or odd, if $\w$ is quadratic. Then $\bff_{h_j}(\g g_\infty) = h_j(\g) f_\infty^{(\nu)}(\g g_\infty)$, thus we may assume $\g \in P_{0,n}(\Z) w_j \G_0^n(p)$ and we have
\[ \Pi \bigl( E(\bff_{h_j}, g, 2s) \bigr)(Z) = \sum_{ \g \in P_{0,n}(\Z) \backslash P_{0,n}(\Z) w_j \G^n_0(p)} \psi^*(\g) f_\infty^{(\nu)}(\g g_\infty) j(g_\infty,  \bfi_n)^k. \]
We write $\g g_\infty = \eta \kappa$ with $\eta \in P_{0,n}(\R)$ and $\kappa \in K_\infty$. Then
\begin{align*} f_\infty^{(\nu)} (\g g_\infty) & = \w_\infty(\det D_\eta) | \det D_\eta|^{-2s} j(\kappa,  \bfi_n)^{-k} 
\\
& =  \w_\infty (\det D_\eta)   |\det D_\eta|^{-2s} \det(D_\eta)^{k} j(\g,Z)^{-k} j(g_\infty, \bfi_n)^{-k}.
\end{align*}
We see that $\w_\infty(\det D_\eta) \det(D_\eta)^{k} = |\det D_\eta|^{k}$ by $\psi(-1) = (-1)^k$. Moreover
$\g \langle Z \rangle = \eta \langle  \bfi_n \rangle = B_\eta D_\eta^{-1} + \sqrt{-1} \, \tp \!D_\eta^{-1} D_\eta^{-1}$ shows 
\[ |\det D_\eta|^{-2s+k} = |\det ( \im \g \langle Z \rangle)|^{s-k/2} = \det (Y)^{s-k/2} \, |j(\g,Z)|^{-2s+k}. \]
This proves our lemma.
\end{proof}

By the above lemma  the map
\begin{equation} \label{isom_degenerate_principal_eisen} I_p(2s, \w)_{K_p} \to \mathcal{E}_{k,s}(\G^n_0(p), \psi), \quad \phi \mapsto \Pi\bigl( E( \bff_\phi, g, s) \bigr) 
\end{equation}
is an isomorphism of $\C$-vectors. We can consider the $U(p)$-operator defined in (\ref{eq_def_U(p)_adele}) on  $I(2s, \w)_{K_p}$, then the isomorphism (\ref{isom_degenerate_principal_eisen}) is $U(p)$-equivariant thanks to Lemma \ref{lemma_compatible_U(p)}. 
Therefore in order to study the $U(p)$-operator on the space of Siegel Eisenstein series, it suffices to consider the $U(p)$-operator (\ref{eq_def_U(p)_adele_another}) on $I_p(2s, \w)_{K_p}$. 

\subsection{$U(p)$-operator on the space of Eisenstein series} \label{subsection_U(p)-action_Eisen}

Now we investigate the action of the operator $U(p)$ on $I_p(2s,\w)_{K_p}$. Let $\varphi \in I_p(2s, \w)_{K_p}$. We put $d(n,k) = nk/2-n(n+1)/2$ as before. By (\ref{eq_def_U(p)_adele_another}) we have
\[ (U(p) \varphi)(w_j) = p^{d(n,k)} \sum_{u \in \Sym^n(\Z_p/p)} \varphi \left( w_j \begin{pmatrix} p \1_n & u \\ 0 & \1_n \end{pmatrix} \right). \]
Here we omit the subscript $p$ representing the $p$-component, for we only consider the $p$-local part. 
Write $u = \begin{pmatrix} u_1 & u_2 \\ \tp u_2 & u_4 \end{pmatrix}$ with $u_1 \in \Sym^j(\Z_p/p)$, $u_2 \in M_{j,n-j}(\Z_p/p)$ and $u_4 \in \Sym^{n-j}(\Z_p/p)$.
Then
\begin{align*} w_j  \begin{pmatrix} p \1_n & u \\ 0 & \1_n \end{pmatrix} & = \left( \begin{array}{cc|cc} 0 & 0 & -\1_j & 0 \\ 0 & p\1_{n-j} & \tp u_2 & u_4 \\ \hline p \1_j &0 & u_1 & u_2 \\ 0 & 0 & 0 & \1_{n-j} \end{array} \right) \\ 
& = \left( \begin{array}{cc|cc} \1_j & 0 & 0 & 0 \\ -\tp u_2  & \1_{n-j} & 0 & u_4 \\ \hline 0 &0 & \1_j & u_2 \\ 0 & 0 & 0 & \1_{n-j} \end{array} \right)
\left( \begin{array}{cc|cc} 0 & 0 & -\1_j & 0 \\ 0 & p\1_{n-j} & 0 & 0 \\ \hline p \1_j &0 & u_1 & 0 \\ 0 & 0 & 0 & \1_{n-j} \end{array} \right).
\end{align*}
We denote the latter matrix of the second line by $M(u_1)$, then $\varphi \left( w_j \begin{pmatrix} p \1_n & u \\ 0 & \1_n \end{pmatrix} \right) = \varphi(M(u_1))$. Since $d(n,k) + j(n-j) + (n-j)(n-j+1)/2 = nk/2-j(j+1)/2$, we have 
\[ (U(p)\varphi)(w_j)  = p^{nk/2-j(j+1)/2}\sum_{u_1 \in \Sym^j(\Z_p/p)} \varphi(M(u_1)). \]
For $u_1 \in \Sym^j(\Z_p)$ we can take $\g \in \GL_j(\Z_p)$ such that
\[ u_1 = \tp \g \begin{pmatrix} p \nu_i & 0 \\ 0 & \la_{j-i} \end{pmatrix} \g \]
with $\la_{j-i} \in \GL_{j-i}(\Z_p) \cap \Sym^{j-i}(\Z_p)$. Set $T(\g) = \diag(\tp \g^{-1}, \1_{n-i}, \g, \1_{n-i})$. Then we have
$\tp T(\g)^{-1} M(u_1) T(\g)^{-1} = M( \begin{smatrix} p \nu_i \\ & \la_{j-i} \end{smatrix})$, thus
\[ \varphi(M(u_1)) = \w(\g)^2 \varphi \bigl( M\begin{smatrix} p \nu_i \\ & \la_{j-i} \end{smatrix} \bigr) = \varphi \bigl( M\begin{smatrix} p \nu_i \\ & \la_{j-i} \end{smatrix} \bigr). \]
We have
\begin{align*} & M \bigl (\begin{smatrix} p \nu_i \\ & \la_{j-i} \end{smatrix} \bigr)  =  \left( \begin{array}{cc|cc} 0_j & 0 & -\1_j & 0 \\ 0 & p\1_{n-j} & 0 & 0_{n-j} \\ \hline p \1_j &0 & \begin{smallmatrix} p \nu_i \\ & \la_{j-i} \end{smallmatrix} \vrule width0pt height13pt depth5pt & 0 \\ 0 & 0_{n-j} & 0 & \1_{n-j} \end{array} \right)  \\
& = \left( \begin{array}{cc|cc} \1_i & & 0_i & \\ & p \1_{n-i} & & \begin{smallmatrix} - \la_{j-i}^{-1} \\ & 0_{n-j} \end{smallmatrix} \vrule width0pt depth9pt \\ \hline  & & p \1_i \\ & & &  \1_{n-i} \end{array} \right) w_i \left( \begin{array}{cc|cc} \1_i &  & \nu_i &  \\ &  \begin{smallmatrix} \la_{j-i}^{-1} \\ & \1_{n-i} \end{smallmatrix}  &  & 0_{n-i} \\ \hline  0_i & &   \1_i &   \\ & \begin{smallmatrix} p\1_{j-i} & \\ & 0_{n-j} \end{smallmatrix} & & \begin{smallmatrix}  \la_{j-i} \\ &  \1_{n-j} \end{smallmatrix}  \end{array} \right),
\end{align*}
the final matrix of the second line is contained in $K_p$. Hence we have the following result. For $u_1 \in \Sym^j(\Z_p)$ with $u_1 = \tp \g \begin{pmatrix} p \nu_j & 0 \\ 0 & \la_{j-i} \end{pmatrix} \g$, we have
\[ \varphi(M(u_1)) = \w(\det \la_{j-i}) p^{-ns+2si} \varphi(w_i). \]

Recall that $\psi = \chi_0$ or $\chi_p$ is the Dirichlet characters modulo $p$.  For positive integers $l$ and $m$ with $l \ge m$, we set
\[ W^l_m(\psi) = \sum_{\substack{ u \in \Sym^l(\Z/p) \\ \rank(u \bmod p) = m}} \widehat{\psi}(u), \]
here $\widehat{\psi}(u) = \psi(\det \la)$ with $u = \tp \g \begin{pmatrix} \la & \\ &  0_{l-m} \end{pmatrix} \g$ for some $\g \in \GL_l(\Z/p)$. We also set $W^m_0(\psi) = 1$ for $m \ge 0$.
Using it we get the following proposition.

\begin{proposition} Let $\{ h_i \}$ $(0 \le i \le n)$ be the basis of $I_p(2s, \w)_{K_p}$ such that the support of $h_i$ is $P_n(\Q_p)w_iK_p$ and $h_i(w_i) = 1$. Then
\[ (U(p)h_i)(w_j) =  \begin{cases} p^{n(k/2-s) + 2si - j(j+1)/2} W_{j-i}^j(\psi) & j \ge i \\ 0 & j<i . \end{cases} \]
\end{proposition}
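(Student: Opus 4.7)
The proposition is a direct consolidation of the calculations that immediately precede its statement, specialised to the basis element $\varphi = h_i$. My plan is to assemble three ingredients: the coset-sum formula for $U(p)$, the Bruhat-type analysis of $M(u_1)$, and the defining support property of $h_i$.

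First I would invoke the already-derived identity
\[ (U(p)\varphi)(w_j) = p^{nk/2 - j(j+1)/2} \sum_{u_1 \in \Sym^j(\Z_p/p)} \varphi(M(u_1)), \]
valid for any $\varphi \in I_p(2s,\w)_{K_p}$, obtained from the $P_n$-coset decomposition of $H_p$ together with the factorisation of $w_j \begin{smatrix} p\1_n & u \\ 0 & \1_n \end{smatrix}$ performed above. Next I would apply the local computation already carried out: when $u_1$ is brought into the normal form $\tp\g \begin{smatrix} p\nu_{i'} & \\ & \la_{j-i'} \end{smatrix}\g$ with $\g \in \GL_j(\Z_p)$ and $\la_{j-i'}$ invertible modulo $p$, the explicit factorisation displayed in the excerpt shows $M(u_1) \in P_n(\Q_p)\, w_{i'}\, K_p$ with $\rank(u_1 \bmod p) = j - i'$, and
\[ \varphi(M(u_1)) = \w(\det\la_{j-i'})\, p^{-ns + 2s i'}\, \varphi(w_{i'}). \]
Here $\w(\det\la) = \psi(\det\la \bmod p)$ since $\psi$ is quadratic or trivial, so the function $\widehat{\psi}(u_1) = \psi(\det\la_{j-i'})$ is well-defined on $\Sym^j(\Z/p)$.

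Finally I would specialise to $\varphi = h_i$. Since $h_i(w_{i'}) = \delta_{i,i'}$, only those $u_1$ with $\rank(u_1 \bmod p) = j - i$ contribute. If $j < i$ this is impossible and the sum is empty, giving $(U(p)h_i)(w_j) = 0$. For $j \ge i$, regrouping the surviving terms by the rank function produces the inner sum $W^j_{j-i}(\psi)$, and collecting the exponents $nk/2 - j(j+1)/2$ and $-ns + 2si$ of $p$ yields the asserted value $p^{n(k/2 - s) + 2si - j(j+1)/2}\, W^j_{j-i}(\psi)$. The only potential snag is careful bookkeeping: one must verify that $\varphi(M(u_1))$ depends only on $u_1 \bmod p$ (immediate, since replacing $u_1$ by $u_1 + pw$ amounts to right multiplication of $M(u_1)$ by a $K_p$-element whose $D$-block is $\1_n$) and that $\widehat{\psi}$ is invariant under the $\GL_j(\Z/p)$-congruence action (true because $\psi^2 = 1$). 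Both points are routine, and the substantive work has already been executed above the statement.
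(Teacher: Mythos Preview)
Your proposal is correct and follows exactly the paper's approach: the proposition is stated immediately after the computations you cite, with the paper simply saying ``Using it we get the following proposition,'' so the proof is precisely the consolidation you describe. Your added remarks on well-definedness (dependence on $u_1 \bmod p$ and $\GL_j(\Z/p)$-invariance of $\widehat\psi$ via $\psi^2=1$) are helpful clarifications that the paper leaves implicit.
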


In particular the representative matrix of the $U(p)$-operator on the space of Siegel Eisenstein series with respect to the basis $\{ E^n_{k,\psi}(w_i; Z,s) \}$ is a triangular matrix. Thus we have the following property.

\begin{corollary}[cf.\ {\cite[Section 4, Proposition]{Boe}, \cite[Corollary 4.2]{Wal}}]  \label{cor_eigenvalues_U(p)} Let
\[ l(s,j) = l_k(s,j) = n(k/2-s) + 2sj - \frac{j(j+1)}{2}. \]
The eigenvalues of $U(p)$-operator on $\mathcal{E}_{k,s}(\G^n_0(p),\psi)$ are given by
$ \{ p^{l(s,j)} \mid  0 \le j \le n \}$. 
In particular, substituting $s=k/2$, the eigenvalues of $U(p)$-operator on the space of holomorphic Siegel Eisenstein series of weight $k$, degree $n$, level $p$ with character $\psi$ are given by $p^{kj-j(j+1)/2}$ $(0 \le j \le n)$.
\end{corollary}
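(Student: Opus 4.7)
My plan is to deduce the corollary directly from the preceding proposition. The key reduction is that, by the isomorphism (\ref{isom_degenerate_principal_eisen}) together with the $U(p)$-equivariance established in Lemma \ref{lemma_compatible_U(p)}, diagonalizing $U(p)$ on $\mathcal{E}_{k,s}(\G^n_0(p),\psi)$ is equivalent to diagonalizing $U(p)$ on the local space $I_p(2s,\w)_{K_p}$ with respect to the basis $\{h_i\}_{0 \le i \le n}$.

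Next I would observe that since $G(\Q_p) = \coprod_{r=0}^n P_n(\Q_p) w_r K_p$ and the basis $\{h_j\}$ satisfies $h_j(w_i) = \delta_{ij}$, every $\varphi \in I_p(2s,\w)_{K_p}$ admits the reconstruction $\varphi = \sum_{j=0}^n \varphi(w_j) h_j$. Applying this to $\varphi = U(p) h_i$ and inserting the formula from the preceding proposition yields
\[ U(p) h_i = \sum_{j=i}^n p^{n(k/2-s)+2si-j(j+1)/2} W^j_{j-i}(\psi) \, h_j, \]
which exhibits the matrix of $U(p)$ in the ordered basis $(h_0, h_1, \ldots, h_n)$ as upper triangular.

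The eigenvalues are then immediately read off as the diagonal entries (corresponding to $j=i$), where by the convention $W^i_0(\psi) = 1$ set just before the proposition, the diagonal entry equals $p^{n(k/2-s)+2si-i(i+1)/2} = p^{l(s,i)}$. For the final assertion, I would substitute $s = k/2$ into $l(s,j)$, which immediately yields $l(k/2,j) = kj - j(j+1)/2$.

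The main (and rather mild) obstacle is bookkeeping the $U(p)$-equivariance through the isomorphism (\ref{isom_degenerate_principal_eisen}) so that the eigenvalues computed locally at $p$ correspond to eigenvalues of the operator acting on the classical Eisenstein space; this has essentially been set up in Section \ref{subsection_Siegel_Eisen_G(A)}, so the verification is purely formal and no genuine analytic input is needed beyond the combinatorial data already assembled.
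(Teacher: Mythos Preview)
Your proposal is correct and follows essentially the same route as the paper: the paper simply observes, immediately after the proposition, that the representative matrix of $U(p)$ with respect to the basis $\{E^n_{k,\psi}(w_i;Z,s)\}$ is triangular, so the eigenvalues are the diagonal entries $p^{l(s,j)}$. Your write-up just makes the triangularity and the reading-off of diagonal entries (via $W^i_0(\psi)=1$) more explicit.
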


Finally we can compute the value $W^j_{j-i}(\psi)$  as follows.

\begin{lemma} Assume that $l \ge m$.
\begin{enumerate}[$(1)$]
\item For the trivial character $\chi_0$ modulo $p$ we have
\[ W^l_{m}(\chi_0) = \begin{cases} p^{m(m+2)/4} \ \dfrac{ \prod_{r=l-m+1}^l (p^r-1)}{\prod^{m/2}_{r=1} (p^{2r}-1)} & \text{$m$ is even} \\ \\
p^{ (m^2-1)/4} \ \dfrac{ \prod_{r=l-m+1}^l (p^r-1)}{\prod^{(m-1)/2}_{r=1} (p^{2r}-1)} & \text{$m$ is odd.}  
\end{cases} \]
\item For the quadratic character $\chi_p$ modulo $p$ we have
\[ W^l_m(\chi_p) = \begin{cases} \chi_p(-1)^{m/2} p^{m^2/4} \ \dfrac{ \prod_{r=l-m+1}^l (p^r-1)}{\prod^{m/2}_{r=1} (p^{2r}-1)} & \text{$m$ is even} \\ 0 & \text{$m$ is odd.}\end{cases} \]
\end{enumerate}

\end{lemma}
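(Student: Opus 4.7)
The plan is to apply the orbit-stabilizer theorem to the action of $\GL_l(\F_p)$ on $\Sym^l(\F_p)$ given by $u \mapsto {}^t\gamma u \gamma$. I would first invoke the classical classification of symmetric bilinear forms over a finite field of odd characteristic: the orbits are parameterized by the rank $m$ and, when $m \ge 1$, by the discriminant class $\epsilon \in \F_p^\times/(\F_p^\times)^2$ of the non-degenerate part. Consequently $\widehat\psi(u)$ is constant on each orbit and equals $\psi(\epsilon)$, and each orbit admits the block representative $u_\epsilon := \diag(\lambda_\epsilon, 0_{l-m})$ with $\lambda_\epsilon$ non-degenerate of discriminant $\epsilon$.

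Next, a direct block-matrix calculation shows that the stabilizer of $u_\epsilon$ in $\GL_l(\F_p)$ consists of matrices $\begin{pmatrix} A & 0 \\ C & D \end{pmatrix}$ with $A \in O(\lambda_\epsilon)$, $D \in \GL_{l-m}(\F_p)$, and $C \in M_{l-m, m}(\F_p)$ arbitrary, so its order is $|O(\lambda_\epsilon)| \cdot p^{m(l-m)} \cdot |\GL_{l-m}(\F_p)|$. Combining orbit-stabilizer with the elementary identity $|\GL_l(\F_p)|/(|\GL_{l-m}(\F_p)|\, p^{m(l-m)}) = p^{m(m-1)/2}\prod_{r=l-m+1}^l(p^r-1)$ reduces the problem to computing
\[ W^l_m(\psi) = p^{m(m-1)/2}\prod_{r=l-m+1}^l(p^r-1)\cdot T_m(\psi), \qquad T_m(\psi):=\sum_\epsilon \frac{\psi(\epsilon)}{|O(\lambda_\epsilon)|}. \]

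To evaluate $T_m(\psi)$ I would exploit the observation that $O(\lambda) = O(c\lambda)$ as subgroups of $\GL_m(\F_p)$ for every $c \in \F_p^\times$, so both discriminant classes yield orthogonal groups of equal order. When $m$ is odd this immediately forces $T_m(\chi_p) = 0$, since $\chi_p$ sums to zero over $\F_p^\times/(\F_p^\times)^2$, giving $W^l_m(\chi_p) = 0$. In the remaining cases I would substitute the classical orders
\begin{align*}
|O^+_{2k}(\F_p)| &= 2p^{k(k-1)}(p^k-1)\prod_{r=1}^{k-1}(p^{2r}-1),\\
|O^-_{2k}(\F_p)| &= 2p^{k(k-1)}(p^k+1)\prod_{r=1}^{k-1}(p^{2r}-1),\\
|O_{2k+1}(\F_p)| &= 2p^{k^2}\prod_{r=1}^{k}(p^{2r}-1),
\end{align*}
together with the identification of plus/minus types with discriminants $(-1)^k$ and $(-1)^k\cdot(\text{non-square})$, so that $\chi_p(\mathrm{disc}^\pm) = \pm\chi_p(-1)^k$. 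For $\chi_0$ one takes the symmetric sum $1/|O^+|+1/|O^-|$ (even $m$) or $2/|O|$ (odd $m$); for $\chi_p$ with even $m$ one takes the antisymmetric difference weighted by $\chi_p(-1)^k$. Both collapse cleanly via $(p^k-1)(p^k+1) = p^{2k}-1$.

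The main obstacle is the final bookkeeping step: verifying that the various powers of $p$ arising from $|\GL_l|$, the stabilizer, and $|O^\pm_{2k}|$ collapse into the compact exponents $m(m+2)/4$, $(m^2-1)/4$, and $m^2/4$ claimed in the lemma, and that the sign factor $\chi_p((-1)^k)$ assembles correctly into $\chi_p(-1)^{m/2}$. These steps are error-prone but conceptually routine.
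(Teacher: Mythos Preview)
Your approach is correct and essentially identical to the paper's: both use the orbit--stabilizer theorem for the $\GL_l(\F_p)$-action $u \mapsto {}^t\gamma u \gamma$ on $\Sym^l(\F_p)$, compute the stabilizer of $\diag(\lambda,0_{l-m})$ as the block lower-triangular matrices with top-left block in $O(\lambda)$, and then reduce everything to the classical orders of the orthogonal groups $O_m^\pm(\F_p)$ (which the paper simply states as a Proposition rather than deriving). Your extra observation that $O(\lambda)=O(c\lambda)$ gives a clean way to see the odd-$m$ vanishing for $\chi_p$, but otherwise the two arguments match step for step.
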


\begin{proof}  For $A \in \Sym^r(\F_p)$ we put $\itO_r(A) = \{ \g \in \GL_r(\F_p) \mid \tp \g A \g = A \}$. For $T(\la) =\diag(\la,0_{l-m}) \in \Sym^l(\F_p)$ with $\la \in \Sym^m(\F_p) \cap \GL_m(\F_p)$, we have
\[ \itO_l( T(\la) ) = \left\{ \begin{pmatrix} a & 0 \\ b & c \end{pmatrix} \biggm| a \in \itO_m(\la), \ b \in M_{l-m,l}(\F_p), c \in \GL_{l-m}(\F_p) \right\}. \]
Fix an element $\delta \in \F_p^\times$ such that $\chi_p(\delta) = -1$ and put $E_m = \diag(1,\ldots, 1, \delta) \in \Sym^m(\F_p)$. It is well-known that for any $u \in \Sym^m(\F_p)$ with $\det u \ne 0$, there exists $\g \in \GL_m(\F_p)$ such that $\tp \g u \g = \1_m$ or $E_m$. Thus we have
\begin{align*} W^l_m(\chi_0) & = \sharp \bigl( \GL_l(\F_p)/ \itO_l(T(\1_m)) \bigr) + \sharp \bigl( \GL_l(\F_p)/ \itO_l(T(E_m)) \bigr), \\ 
W^l_m(\chi_p)  &= \sharp \bigl( \GL_l(\F_p)/\itO_l(T(\1_m)) \bigr)-\sharp \bigl( \GL_l(\F_p) /\itO_l(T(E_m)) \bigr).
\end{align*}
Hence the assertion follows from the fact
\[ \sharp \GL_l(\F_p) = p^{l(l-1)/2} \prod_{r=1}^l (p^r-1) \]
and the proposition below.
\end{proof}

\begin{proposition} 
\begin{enumerate}[$(1)$]
\item If $m$ is odd then
\[ \sharp \itO_m(\1_m) = \sharp \itO_m(E_m) = 2 p^{(m-1)^2/4} \prod_{r=1}^{(m-1)/2} (p^{2r}-1). \]
\item If $m$ is even then
\begin{align*}
\sharp \itO(\1_m) & = 2 p^{(m^2-2m)/4} \bigl( p^{m/2}-\chi_p(-1)^{m/2} \bigr) \prod_{r=1}^{m/2-1} (p^{2r}-1 ), \\
\sharp \itO(E_m) & = 2 p^{(m^2-2m)/4} \bigl( p^{m/2}+\chi_p(-1)^{m/2} \bigr) \prod_{r=1}^{m/2-1} (p^{2r}-1 ).
\end{align*}
\end{enumerate}
\end{proposition}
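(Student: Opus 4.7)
The plan is to prove both parts by induction on $m$ using the standard orbit--stabilizer recursion for finite orthogonal groups. By Witt's extension theorem, for any nondegenerate $A \in \Sym^m(\F_p) \cap \GL_m(\F_p)$ the group $\itO_m(A)$ acts transitively on the set $V(A,c) = \{ v \in \F_p^m \mid \tp v A v = c\}$ for each $c \in \F_p^\times$. Fixing $v_0 \in V(A,1)$, the stabilizer of $v_0$ is naturally identified with $\itO_{m-1}(A_0)$, where $A_0$ is the Gram matrix of the restriction of $A$ to $v_0^\perp$; expressing $A$ in a basis beginning with $v_0$ shows $\det A \equiv \det A_0$ modulo $(\F_p^\times)^2$, so the induction preserves the discriminant class. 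This yields the recursion $\sharp \itO_m(A) = \sharp V(A,1) \cdot \sharp \itO_{m-1}(A_0)$.

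For the odd $m$ case in (1), I would first note that $\det(\delta \1_m) = \delta^m$ is a non-square, so over $\F_p$ the quadratic forms $\delta \1_m$ and $E_m$ are equivalent, while $\itO_m(\delta \1_m) = \itO_m(\1_m)$ as subsets of $\GL_m$. This immediately gives $\sharp\itO_m(\1_m) = \sharp\itO_m(E_m)$, so only one of the two identities needs to be verified directly. The computational heart is then the evaluation of $\sharp V(A,1)$. Using the Fourier identity
\[ \sharp V(A, 1) = \frac{1}{p} \sum_{t \in \F_p} \sum_{v \in \F_p^m} e^{ 2 \pi \sqrt{-1}\, t (\tp v A v - 1) / p}, \]
together with the Gauss sum $\sum_{v \in \F_p} e^{2\pi \sqrt{-1}\, a v^2/p} = \chi_p(a)\tau$ for $a \in \F_p^\times$ (where $\tau^2 = \chi_p(-1) p$), diagonalizing $A$ reduces the inner sum to a product of $m$ Gauss sums. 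The result takes the shape $p^{m-1} + \chi_p(\det A)\cdot(\text{sign})\cdot p^{(m-1)/2}$ when $m$ is odd, and $p^{m-1} - \chi_p(\det A)\chi_p(-1)^{m/2} p^{m/2-1}$ when $m$ is even.

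Substituting these counts into the recursion and iterating from the base cases $\sharp \itO_0 = 1$ and $\sharp \itO_1(\1_1) = \sharp \itO_1(E_1) = 2$ then produces the claimed product formulas. The main obstacle is the bookkeeping of the sign factor $\chi_p(\det A)\chi_p(-1)^{m/2}$ as $m$ decreases by one: the discriminant class is preserved, but the parity of $m/2$ and the split/non-split character of the contribution alternate at each step, so one has to group consecutive odd and even reductions to see the telescoping $\prod_r (p^{2r}-1)$ emerge together with the single boundary factor $p^{m/2} \pm \chi_p(-1)^{m/2}$ in the even case.
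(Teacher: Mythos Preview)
The paper does not actually supply a proof of this proposition: it is stated immediately after the lemma on $W^l_m(\psi)$ and used there, but treated as a known fact (these are the classical order formulas for the finite orthogonal groups $\itO_m^{\pm}(\F_p)$). So there is no ``paper's proof'' to compare against.

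Your outline is a correct derivation and is one of the standard ways to obtain these formulas. A few remarks on points you left slightly vague. The transitivity of $\itO_m(A)$ on $V(A,1)$ does follow from Witt's extension theorem since $p$ is odd and $A$ is nondegenerate; the stabilizer of $v_0$ acts on $v_0^\perp$ as the full orthogonal group of the restricted form because any isometry of $v_0^\perp$ extends by the identity on $\F_p v_0$. Your discriminant bookkeeping is right: the restricted form $A_0$ has the same discriminant class as $A$, so the recursion stays within the sequences $\1_m \leadsto \1_{m-1} \leadsto \cdots$ and $E_m \leadsto E_{m-1} \leadsto \cdots$ (up to equivalence). Your Gauss sum count for even $m$,
\[
\sharp V(A,1)=p^{m-1}-\chi_p(\det A)\,\chi_p(-1)^{m/2}\,p^{m/2-1},
\]
is correct; for odd $m$ the precise ``sign'' you suppressed is $\chi_p(-1)^{(m-1)/2}$, so
\[
\sharp V(A,1)=p^{m-1}+\chi_p(\det A)\,\chi_p(-1)^{(m-1)/2}\,p^{(m-1)/2}.
\]
With these in hand, grouping two consecutive steps (from $m$ even to $m-2$ even, through the intermediate odd dimension) gives exactly the factor $p^{m-2}(p^{m-2}-1)\cdots$ needed for the product $\prod_r(p^{2r}-1)$, with the single unmatched even step producing the boundary factor $p^{m/2}\mp\chi_p(-1)^{m/2}$. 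Your trick for part (1), using $\itO_m(\delta\1_m)=\itO_m(\1_m)$ together with the equivalence $\delta\1_m\sim E_m$ for $m$ odd, is a clean way to get the equality of the two orders without a separate computation.
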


As a consequence we have the following result. In order to state the concrete description we prepare the notation. For $0 \le i,j \le n$ we put
\begin{equation*}
m_{\chi_0}(s)_{ij} = 
 \begin{cases} p^{2si -j(j+1)/2 + (j-i)(j-i+2)/4} \\ \qquad \quad \times 
\dfrac{\prod_{r=i+1}^j(p^r-1)}{\prod_{r=1}^{(j-i)/2} (p^{2r}-1)} & \text{if $i \le j$ and $j-i$ is even,}  \\ \\
p^{2si -j(j+1)/2 + ((j-i)^2-1)/4} \\ 
\qquad \quad \times  \dfrac{\prod_{r=i+1}^j(p^r-1)}{\prod_{r=1}^{(j-i-1)/2} (p^{2r}-1)} & \text{if $i \le j$ and $j-i$ is odd,} \\ 
0 & \text{if $i>j$.} \end{cases}
\end{equation*}
We also set
\begin{equation*}
m_{\chi_p}(s)_{ij} = 
 \begin{cases} \chi_p(-1)^{(j-i)/2} \, p^{2si -j(j+1)/2 + (j-i)^2/4} \\  \qquad  \times  \dfrac{\prod_{r=i+1}^j(p^r-1)}{\prod_{r=1}^{(j-i)/2} (p^{2r}-1)} & \text{if $i \le j$ and $j-i$ is even,}  \\ \\
0 & \text{otherwise.} \end{cases}
\end{equation*}

\begin{theorem}[cf.\ {\cite[Theorem 4.1]{Wal}}]
\begin{enumerate}[$(1)$]
\item For the trivial character case we have
\[ U(p) E^n_{k,\chi_0} (w_i;Z,s)  = p^{n(k/2-s)} \sum_{j=i}^n m_{\chi_0}(s)_{ij} \, E^n_{k,\chi_0}(w_j;Z,s). \]

\item For the quadratic character case we have
\[ U(p) E^n_{k,\chi_p} (w_i;Z,s)  = p^{n(k/2-s)} \sum_{\substack{j=i \\ j \equiv i \bmod 2}}^n m_{\chi_p}(s)_{ij} \, E^n_{k,\chi_p}(w_j;Z,s). \]

\end{enumerate}
\end{theorem}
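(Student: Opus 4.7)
The plan is to combine the adelic dictionary built in Section~\ref{subsection_Siegel_Eisen_G(A)} with the explicit local computations already carried out in the preceding Proposition and Lemma. Because the isomorphism $\phi \mapsto \Pi(E(\bff_\phi,g,\cdot))$ from (\ref{isom_degenerate_principal_eisen}) is $U(p)$-equivariant by Lemma~\ref{lemma_compatible_U(p)}, and sends the basis $\{h_j\}$ of $I_p(2s,\w)_{K_p}$ to the basis $\{E^n_{k,\psi}(w_j;Z,s)\}$ of $\mathcal{E}_{k,s}(\G^n_0(p),\psi)$, it suffices to expand $U(p) h_i$ in the basis $\{h_j\}$ and then transport the coefficients via this isomorphism.

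First I would write $U(p) h_i = \sum_{j=0}^n c_{ij} h_j$ and exploit the support condition defining the basis: the double cosets $P_n(\Q_p) w_j K_p$ for $0 \le j \le n$ are disjoint, so $h_j(w_l) = \delta_{jl}$, and therefore $c_{ij} = (U(p) h_i)(w_j)$. The preceding Proposition then gives at once
\[ c_{ij} = p^{n(k/2 - s) + 2si - j(j+1)/2} \, W^{j}_{j - i}(\psi) \quad (j \ge i), \qquad c_{ij} = 0 \quad (j < i). \]

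The remaining step is to identify $p^{2si - j(j+1)/2} W^{j}_{j-i}(\psi)$ with $m_\psi(s)_{ij}$ using the explicit formulas for $W^{j}_{j-i}(\psi)$ supplied by the preceding Lemma. For $\psi = \chi_0$ the parity of $j-i$ splits the formula into the two cases of $m_{\chi_0}(s)_{ij}$; adding the exponent hidden in $W^{j}_{j-i}(\chi_0)$, namely $(j-i)(j-i+2)/4$ or $((j-i)^2-1)/4$ according to the parity, to $2si - j(j+1)/2$ reads off the claim. For $\psi = \chi_p$ the vanishing $W^{j}_{j-i}(\chi_p) = 0$ for odd $j - i$ produces exactly the parity restriction in the summation, and in the even case the sign $\chi_p(-1)^{(j-i)/2}$ together with the power $p^{(j-i)^2/4}$ from the Lemma combine with the prefactor to yield $m_{\chi_p}(s)_{ij}$.

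Since all of the essential ingredients --- the adelic lift, the unwinding of the local double coset $H_p$ at $w_j$, and the orthogonal group counts behind $W^{j}_{j-i}(\psi)$ --- are already in place, no single step is a genuine obstacle. The only mildly delicate point is the bookkeeping of $p$-exponents in the $\chi_0$ case, where the two parity branches look asymmetric but are forced by the corresponding branches of $W^{j}_{j-i}(\chi_0)$; otherwise the proof is a direct transcription.
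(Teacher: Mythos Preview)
Your proposal is correct and follows essentially the same route as the paper: the theorem is stated there immediately after the Proposition computing $(U(p)h_i)(w_j)$ and the Lemma evaluating $W^l_m(\psi)$, with the phrase ``As a consequence we have the following result,'' i.e.\ exactly the combination of the $U(p)$-equivariant isomorphism (\ref{isom_degenerate_principal_eisen}), the identity $c_{ij}=(U(p)h_i)(w_j)$, and the substitution of the explicit values of $W^j_{j-i}(\psi)$ that you describe.
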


For $\psi = \chi_0$ or $\chi_p$, let $M_\psi(s)$ be the $(n+1) \times (n+1)$-matrix whose $(i,j)$-component is $m_\psi(s)_{ij}$, here $i$ and $j$ run through  $0 \le i,j \le n$. Then the representative matrix of $U(p)$ as the endomorphism of $\mathcal{E}_{k,s}(\G^n_0(p), \psi)$ with respect to the basis $ \{ E^n_{k,\psi}(w_i; Z, k) \mid 0 \le i \le n \}$ is given by $p^{n(k/2-s)} \, \tp \! M_\psi(s)$, that is a lower triangular matrix. 

We use the easy lemma for linear algebra.
\begin{lemma} Let $A = (a_{ij})_{0 \le i \le n, \, 0 \le j \le n}$ be a lower triangular matrix. Assume that the diagonal entries $\la_i = a_{ii}$ are mutually distinct. We define $v_j^{(i)}$ inductively by
\[ v_j^{(i)} = \begin{cases} 0 & 0 \le j \le i-1 \\ 1 & j = i \\ \displaystyle{- \frac{1}{\la_j - \la_i} \sum_{k=i}^{j-1} a_{jk} v_k^{(i)} } & j \ge i+1. \end{cases} \]
Then the column vector $\mathbf{v}^{(i)} = (v_j^{(i)})_{0 \le j \le n}$ is an eigenvector of $A$ with respect to the eigenvalue $\la_i$.
\end{lemma}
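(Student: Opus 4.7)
The plan is to verify directly that $A\mathbf{v}^{(i)} = \lambda_i \mathbf{v}^{(i)}$ by computing the $j$-th component of the left-hand side for each $j$ and splitting into the three cases $j < i$, $j = i$, and $j > i$. Since the recursion that defines $v^{(i)}_j$ is engineered precisely to produce this identity, the computation should be essentially mechanical.

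For $j < i$: both sums collapse. Because $A$ is lower triangular, $a_{jk} = 0$ for $k > j$, and because $v^{(i)}_k = 0$ for $k < i$, the sum $(A \mathbf{v}^{(i)})_j = \sum_k a_{jk} v^{(i)}_k$ is supported on indices $i \le k \le j$, which is empty when $j < i$. Thus $(A \mathbf{v}^{(i)})_j = 0 = \lambda_i v^{(i)}_j$.

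For $j = i$: the only surviving term is $k = i$, giving $a_{ii} v^{(i)}_i = \lambda_i \cdot 1$, as required. For $j > i$: split the sum as
\[ (A \mathbf{v}^{(i)})_j = \sum_{k=i}^{j-1} a_{jk} v^{(i)}_k + a_{jj} v^{(i)}_j. \]
By the defining recursion, $\sum_{k=i}^{j-1} a_{jk} v^{(i)}_k = -(\lambda_j - \lambda_i) v^{(i)}_j$, and $a_{jj} = \lambda_j$, so the right-hand side equals $-(\lambda_j - \lambda_i) v^{(i)}_j + \lambda_j v^{(i)}_j = \lambda_i v^{(i)}_j$. This completes the verification.

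There is no genuine obstacle: the hypothesis that the $\lambda_i$ are mutually distinct is used exactly to ensure that $\lambda_j - \lambda_i \ne 0$ for $j > i$, so the recursion is well-defined, and the triangular structure guarantees that the computation decomposes cleanly into the three cases above. The proof is essentially a one-line check once the recursion is rewritten in the form $(\lambda_j - \lambda_i) v^{(i)}_j + \sum_{k=i}^{j-1} a_{jk} v^{(i)}_k = 0$.
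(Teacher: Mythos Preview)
Your proof is correct; the direct componentwise verification is exactly the natural argument, and all three cases are handled correctly. The paper itself does not supply a proof of this lemma at all --- it is introduced as an ``easy lemma for linear algebra'' and left to the reader --- so your proposal simply fills in the omitted details.
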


For $\psi = \chi_0$ or $\chi_p$, we define $b_\psi(s)_{ij}$ with $0 \le i,j \le n$ inductively as follows: 
\begin{equation} \label{eq_b_psi}
\begin{split}
& \text{if $i>j$} \quad b_\psi(s)_{ij}=0,   \\ 
& \text{if $i=j$} \quad b_\psi(s)_{ii}=1, \\
& \text{if $i<j$} \quad b_\psi(s)_{ij} = -\frac{p^{-2si + i(i+1)/2}}{p^{(j-i) (2s-(j+i+1)/2 )} -1} \sum_{r=i}^{j-1} m_\psi(s)_{rj} b_\psi(s)_{ir}. 
\end{split} 
\end{equation} Note that $b_{\chi_p}(s)_{ij}=0$ unless $i \not\equiv j \bmod 2$.
Let $B_\psi(s)$ be the $(n+1)\times (n+1)$-matrix  whose $(i,j)$-component is $b_\psi(s)_{ij}$. 

Then by the above lemma we have the following proposition.

\begin{proposition} \label{prop_eigen_Eisenstein_series}
For $\psi = \chi_0$ or $\chi_p$, the Eisenstein series
\[ E^{n,(i)}_{k,\psi}(Z,s) = E^n_{k,\psi}(w_i;Z,s) + \sum_{j=i+1}^n b_\psi(s)_{ij} E^n_{k,\psi}(w_j;Z, s) \]
is an eigenfunction of $U(p)$, whose eigenvalue is $p^{n(k/2-s) + 2si-i(i+1)/2}$.

\end{proposition}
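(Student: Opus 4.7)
The plan is to deduce the proposition immediately from the preceding theorem and the linear algebra lemma. By the theorem, the action of $U(p)$ on $\mathcal{E}_{k,s}(\G^n_0(p),\psi)$ with respect to the basis $\{E^n_{k,\psi}(w_i;Z,s)\}_{0\le i \le n}$ is given by the matrix $A := p^{n(k/2-s)}\, \tp \! M_\psi(s)$, which is lower triangular. The diagonal entries are
\[ A_{jj} = p^{n(k/2-s)} m_\psi(s)_{jj} = p^{n(k/2-s)+2sj-j(j+1)/2} = p^{l(s,j)}, \]
since the formula for $m_\psi(s)_{jj}$ reduces (empty products) to $p^{2sj-j(j+1)/2}$ in both the $\psi=\chi_0$ and $\psi=\chi_p$ cases. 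Away from a discrete set of $s$, these diagonal entries are pairwise distinct, so the lemma applies and produces an eigenvector of $A$ for each eigenvalue $\lambda_i = p^{l(s,i)}$.

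The next step is to check that the vector $\mathbf{v}^{(i)}$ produced by the recursion in the lemma is precisely $\bigl(b_\psi(s)_{ij}\bigr)_{0 \le j \le n}$ as defined in (\ref{eq_b_psi}). Writing $a_{jk}$ for the $(j,k)$-entry of $A$, we have $a_{jk} = p^{n(k/2-s)} m_\psi(s)_{kj}$ for $k \le j$. The recursion of the lemma gives, for $j>i$,
\[ v^{(i)}_j = -\frac{1}{\lambda_j - \lambda_i}\sum_{r=i}^{j-1} a_{jr}\, v^{(i)}_r = -\frac{1}{p^{2sj-j(j+1)/2}-p^{2si-i(i+1)/2}}\sum_{r=i}^{j-1} m_\psi(s)_{rj}\, v^{(i)}_r, \]
the factor $p^{n(k/2-s)}$ cancelling. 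A direct computation shows
\[ p^{2sj-j(j+1)/2}-p^{2si-i(i+1)/2} = p^{2si-i(i+1)/2}\bigl(p^{(j-i)(2s-(j+i+1)/2)}-1\bigr), \]
using that $\tfrac{1}{2}(j(j+1)-i(i+1)) = \tfrac{1}{2}(j-i)(j+i+1)$. Substituting this identity into the recursion reproduces exactly the defining formula (\ref{eq_b_psi}) for $b_\psi(s)_{ij}$, with starting values $b_\psi(s)_{ii}=1$ and $b_\psi(s)_{ij}=0$ for $j<i$. Therefore $\mathbf{v}^{(i)} = \bigl(b_\psi(s)_{ij}\bigr)_j$, and the linear combination $\sum_{j\ge i} b_\psi(s)_{ij} E^n_{k,\psi}(w_j;Z,s) = E^{n,(i)}_{k,\psi}(Z,s)$ is an eigenfunction of $U(p)$ with eigenvalue $p^{l(s,i)}$, as claimed. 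In the $\psi = \chi_p$ case, the parity vanishing $b_{\chi_p}(s)_{ij}=0$ for $j\not\equiv i\bmod 2$ is consistent with the recursion because $m_{\chi_p}(s)_{rj}=0$ unless $j-r$ is even.

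There is essentially no obstacle: the entire proof is mechanical once the lemma is in place, and the only non-trivial verification is the factorization of $\lambda_j-\lambda_i$ above, which matches the denominator in (\ref{eq_b_psi}). The only mild subtlety is to note that the argument only produces an eigenvector at values of $s$ where the $\lambda_i$'s are distinct and the denominators $p^{(j-i)(2s-(j+i+1)/2)}-1$ are nonzero; at other values one extends the identity by meromorphic continuation, which is harmless since both sides are meromorphic in $s$.
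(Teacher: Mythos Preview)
Your proof is correct and follows exactly the approach of the paper, which simply states that the proposition follows from the preceding linear-algebra lemma without spelling out the details. You have supplied the routine verification that the recursion of the lemma applied to $A = p^{n(k/2-s)}\,\tp M_\psi(s)$ reproduces the defining recursion (\ref{eq_b_psi}) for $b_\psi(s)_{ij}$, including the factorization of $\lambda_j-\lambda_i$; nothing more is needed.
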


\section{Fourier expansion of the Siegel Eisenstein series} \label{section_Fourier_exp}

In order to write down the functional equations explicitly, we consider the Fourier expansion of the Siegel Eisenstein series. 
We put for $n \ge r$,
\[ \Z_{\rm prim}^{(n,r)} = \{A \in M_{n,r}(\Z) \mid \exists B \in M_{n,n-r}(\Z) \ \text{such that} (A,B) \in \GL_n(\Z) \}. \]
For matrices $A$ and $B$ we set $A[B] = \tp \! B A B$ if the matrix products are defined.
Then by a similar argument of \cite[p.306]{Ma} we have the following.
\begin{proposition} \label{prop_Fourier_exp_w_r}
\begin{align*}
E^n_{k,\psi}(w_\nu;Z,s) & = (\det Y)^{s-k/2} \sum_{r=\nu}^n \sum_{N \in \Sym^r(\Z)^*}  \mathscr{S}_{r}^{\nu}(\psi,N,2s) \\ 
& \times  \sum_{Q \in \Z^{(n,r)}_{\mathrm{prim}}/\GL_r(\Z)}  \xi_{r} \Bigl(Y[Q], N, s+\frac{k}{2}, s-\frac{k}{2} \Bigr) \epi(N[ \tp \! Q]X).
\end{align*}
\end{proposition}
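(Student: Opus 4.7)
The strategy is to adapt the classical Fourier expansion argument of Maass \cite[p.~306]{Ma} to the coset $P_{0,n}(\Z) \backslash P_{0,n}(\Z) w_\nu \G_0^n(p)$ with the character twist $\psi^*$. The outer sum over primitive $Q$ and the archimedean factor $\xi_r$ are structurally identical to the full-level case; all the novelty is concentrated in the singular series $\mathscr{S}_r^\nu(\psi,N,2s)$, so the bulk of the work is a local arithmetic analysis at $p$.

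First, I would parametrize the representatives. An element $\gamma \in P_{0,n}(\Z) \backslash P_{0,n}(\Z) w_\nu \G_0^n(p)$ is determined, up to the left $P_{0,n}(\Z)$-action, by its bottom row $(C_\gamma, D_\gamma)$, a coprime symmetric pair whose reduction mod $p$ is restricted by the double coset condition. Stratify by $r = \rank C_\gamma$; an inspection of the row structure of $w_\nu$, combined with the congruence $C_\kappa \equiv 0 \bmod p$ for $\kappa \in \G_0^n(p)$, forces $r \ge \nu$. For each such $r$, decompose $C_\gamma = Q\, C_0$ with $Q \in \Z^{(n,r)}_{\mathrm{prim}}/\GL_r(\Z)$ encoding the column span, leaving $(C_0, D_\gamma)$ as a coprime rank-$r$ pair. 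With $Q$ fixed, the remaining inner sum over $(C_0, D_\gamma)$ is handled by the Siegel--Poisson summation on $D_\gamma$ modulo the lattice $C_0 \Z^n$. Since $Q$ cleanly decouples from the rest, the archimedean integral reduces to the confluent hypergeometric--type function
\[ \xi_r\Bigl(Y[Q], N, s+\tfrac{k}{2}, s-\tfrac{k}{2}\Bigr) \epi(N[\tp \! Q]X), \]
attached to the $N$-th Fourier coefficient, exactly as in the full-level computation.

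The main obstacle is to identify the residual arithmetic sum, twisted by $\psi^*$, with $\mathscr{S}_r^\nu(\psi, N, 2s)$. Three compatibilities must be verified. First, the character $\psi^*$ defined in \eqref{eq_psi*} via the decomposition $\gamma = \eta w_r \kappa$ must behave multiplicatively under the $(Q, C_0, D_\gamma)$-parametrization; at primes $q \ne p$ it is trivial, so this is automatic there. Second, the condition that $\gamma$ lie in $P_{0,n}(\Z) w_\nu \G_0^n(p)$ rather than in a different double coset translates into a $p$-local rank condition on the reduction of $(C_0, D_\gamma) \bmod p$, and this is precisely what produces the superscript $\nu$ in $\mathscr{S}_r^\nu$. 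Third, the local sum at primes $q \ne p$ must reproduce the classical singular series factor, while at $p$ the analysis is parallel to, and should reuse, the $K_p$-orbit computation performed in section~\ref{section_U(p)-operator}. Once these local pieces are assembled, collection of the $Q$-sum and the sum over $r$ gives the stated formula.
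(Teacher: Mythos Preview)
Your proposal is correct and follows exactly the approach the paper indicates. The paper does not actually give a proof of this proposition; it simply writes ``by a similar argument of \cite[p.~306]{Ma}'' before stating it, and then records the definition of $\mathscr{S}_r^\nu(\psi,N,s)$ and $\widetilde{\psi}$ afterward. Your outline---stratifying by $r=\rank C_\gamma$ (with the observation $r\ge\nu$ forced by the $w_\nu$-coset), extracting the primitive $Q$, Poisson-summing to produce $\xi_r$, and tracking both the $p$-local rank condition and the character $\psi^*$ into the definition of $\mathscr{S}_r^\nu$---is precisely the natural elaboration of Maass's full-level computation to this setting, and it is consistent with the paper's definitions of $\mathcal{M}_r^\nu$ and $\widetilde{\psi}$ given immediately after the statement.
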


Here $\xi_r(g,h,\alpha,\beta)$ is the confluent hypergeometric function investigated in \cite{Shi1}; $\mathscr{S}_r^{\nu}(\psi,N,s)$ is a kind of Siegel series defined as follows. Let
\[ \mathcal{M}_r = \left\{(C,D) \in M_{r,2r}(\Z) \Bigm|  \begin{array}{l} \text{$(C,D)$ is symmetric and co-prime,} \\ \det C > 0 \end{array} \right\}. \]
Here $(C,D)$ is symmetric if $C \, \tp \! D = D \, \tp  C$ and $(C, D)$ is co-prime if there exists $U,V \in M_r(\Z)$ such that $CU+DV = \1_r$. Then the map 
\[ \SL_r(\Z) \backslash \mathcal{M}_r \to \Sym^r(\Q),  \quad (C,D) \mapsto C^{-1}D \] 
is bijective. For $T = C^{-1}D \in \Sym^r(\Q)$ we put $\delta(T) = \det C$.
For $0 \le \nu \le r$ set
\begin{align*} \mathcal{M}^\nu_r & = \{(C,D) \in \mathcal{M}_r \mid \rank(C \bmod p) = \nu \}, \\
 \Sym^r(\Q)^{(\nu)} & = \{ R = C^{-1} D \in \Sym^{r}(\Q) \mid (C,D) \in \mathcal{M}^\nu_r \}.
\end{align*}
We define the function $\widetilde{\psi}$ on $\Sym^r(\Q)^{(\nu)}$ as follows. For $R = C^{-1} D\in \Sym^r(\Q)^{(\nu)}$ with $(C,D) \in \mathcal{M}_r^{\nu}$, there exists $\g = \begin{smatrix} * & * \\ C & D \end{smatrix} \in P_{0,r}(\Z) w_{r,\nu} \G^r_0(p)$.
Then we put $\widetilde{\psi}(R) = \psi^*(\g)$, where $\psi^*$ is defined in (\ref{eq_psi*}). Under the above notation we define
\[ \mathscr{S}^{\nu}_r(\psi,N,s) = \sum_{R \in \Sym^{r}(\Q/\Z)^{(\nu)} } \widetilde{\psi}(R) \delta(R)^{-s} \epi(RN), \]
here we write $\Sym^r(\Q/\Z)^{(\nu)}$ instead of $\Sym^r(\Q)^{(\nu)} \bmod \Sym^r(\Z)$.

We can show that the Siegel series $\mathscr{S}_r^\nu(\psi,N,s)$ has an Euler product expression. 
 For a prime number $q$, let $\Sym^r(\Q)_q = \bigcup_{i=0}^\infty \dfrac{1}{q^i} \Sym^r(\Z)$. Any $R \in \Sym^r(\Q)$ has a decomposition $R = R_0 + \sum_{i=1}^s R_i$ with $R_0 \in \Sym^r(\Q)_p$ and $R_i \in \Sym^r(\Q)_{q_i}$, where  $p, q_1, \ldots, q_s$ are mutually distinct primes. This decomposition is unique up to $\Sym^r(\Z)$. Then we have 
\[ \delta(R) = \delta(R_0) \prod_{i=1}^s \delta(R_i). \]
We put $\Sym^r(\Q)^{(\nu)}_p = \Sym^r(\Q)^{(\nu)} \cap \Sym^r(\Q)_p$. 

\begin{lemma} \label{lem_euler_product_S_w_r} Suppose that $R \in \Sym^r(\Q)$ is written as  $R = R_0 + \sum_{i=1}^s R_i$ with $R_0 \in \Sym^r(\Q)_p$ and $R_i \in \Sym^r(\Q)_{q_i}$. 
Then $R \in \Sym^r(\Q)^{(\nu)}$ if and only if $R_0 \in \Sym^r(\Q)^{(\nu)}_p$,
and in that case we have
\[ \widetilde{\psi}(R) = \widetilde{\psi}(R_0) \prod_{i=1}^r \psi(\delta(R_i)). \]
\end{lemma}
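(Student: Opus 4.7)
The lemma splits into (i) the rank equivalence $R \in \Sym^r(\Q)^{(\nu)} \Leftrightarrow R_0 \in \Sym^r(\Q)^{(\nu)}_p$, and (ii) the multiplicative formula for $\widetilde\psi$. My plan is to prove both via a single $p$-adic comparison between the representatives $(C, D) \in \mathcal M_r$ of $R$ and $(C_0, D_0) \in \mathcal M_r$ of $R_0$, then lift that comparison to an identity in $\Sp(r, \Z_p)$.

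The starting point is that $R^{(p)} := \sum_{i \ge 1} R_i$ has all denominators coprime to $p$, so $R^{(p)} \in \Sym^r(\Z_p)$ and $R \equiv R_0 \pmod{\Sym^r(\Z_p)}$. A direct check shows that $(C_0, D_0 + C_0 R^{(p)})$ is a second representative of $R$ in $\mathcal M_r(\Z_p)$ (symmetry and coprimality over $\Z_p$ are routine). Consequently there exists $U \in \GL_r(\Z_p)$ with $(C, D) = U \cdot (C_0, D_0 + C_0 R^{(p)})$ over $\Z_p$; explicitly $U = C C_0^{-1}$, and one verifies $U \in M_r(\Z_p)$ by pairing with the B\'ezout relation for $(C_0, D_0 + C_0 R^{(p)})$, likewise $U^{-1} \in M_r(\Z_p)$. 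Comparing determinants gives $\det U = \delta(R)/\delta(R_0) = \prod_i \delta(R_i) \in \Z_p^\times$. Statement (i) is then immediate: $C \equiv \bar U\, C_0 \pmod p$ with $\bar U \in \GL_r(\F_p)$, so the rank is preserved.

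For (ii) I lift the $(C, D)$-relation to $\Sp(r, \Z_p)$. Set $\widetilde U := \diag(\tp U^{-1}, U) \in P_{0,r}(\Z_p)$ and $n_{R^{(p)}} := \begin{smatrix} I & R^{(p)} \\ 0 & I \end{smatrix}$; then $\widetilde U \cdot \g_0 \cdot n_{R^{(p)}}$ has the same $(C, D)$-blocks as $\g$ by construction, so the two differ by left multiplication by $n_S$ for some $S \in \Sym^r(\Z_p)$. Writing $\g_0 = \eta_0 w_\nu \kappa_0$ yields the local decomposition $\g = \eta w_\nu \kappa$ with $\eta := n_S \widetilde U \eta_0 \in P_{0,r}(\Z_p)$ and $\kappa := \kappa_0\, n_{R^{(p)}}$, whose $C$-block is still $\equiv 0 \pmod p$. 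Since $\psi^*$ factors through $\g \bmod p$, this local decomposition is enough for the evaluation: the $D$-block is multiplicative in $P_{0,r}$ and $n_S$ contributes $1$, giving $\det D_\eta = \det U \cdot \det D_{\eta_0}$; and the vanishing $C_{\kappa_0} \equiv 0 \pmod p$ prevents the right twist $n_{R^{(p)}}$ from altering the $D$-block modulo $p$, so $\det D_\kappa \equiv \det D_{\kappa_0} \pmod p$. Multiplying,
\[
 \widetilde\psi(R) \;=\; \psi(\det U)\, \widetilde\psi(R_0) \;=\; \widetilde\psi(R_0) \prod_{i=1}^s \psi(\delta(R_i)).
\]

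The principal subtlety is to work with $\GL_r(\Z_p)$- rather than $\SL_r(\Z_p)$-equivalence: it is precisely the determinant $\det U = \prod_i \delta(R_i)$ that carries the prime-to-$p$ contribution $\prod_i \psi(\delta(R_i))$ to the character, and it would be invisible under a naive $\SL_r$-normalization. Once this ambiguity is correctly identified, the rest reduces to bookkeeping of $D$-blocks inside the Siegel parabolic.
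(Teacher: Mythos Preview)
Your proof is correct and takes a genuinely different route from the paper's. The paper argues by brute force via the elementary divisor theorem: it puts $R$ in Smith normal form $R = U\,\diag(\la_i/\delta_i)\,V$, writes down an explicit coprime pair $(C,D)$, computes $\widetilde\psi(R)$ as an explicit product involving the $\delta_i$, $\la_i$, and a block of $W = V\,\tp U^{-1}$, and then repeats the whole computation for $R_0 = \delta' m R$ to compare. Your argument is structural: you observe once that $R \equiv R_0 \pmod{\Sym^r(\Z_p)}$, deduce that the coprime pairs differ over $\Z_p$ by a left factor $U \in \GL_r(\Z_p)$ with $\det U = \prod_i \delta(R_i)$, and then push this relation up to an identity $\g = n_S\,\widetilde U\,\g_0\,n_{R^{(p)}}$ in $\Sp(r,\Z_p)$, from which both the rank statement and the character identity drop out by reading off $D$-blocks.

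What each buys: the paper's computation is entirely self-contained and yields along the way an explicit closed formula for $\widetilde\psi(R)$ in terms of the elementary divisors. Your approach avoids all coordinates and makes transparent \emph{why} the answer is $\psi(\det U)\,\widetilde\psi(R_0)$; it also meshes naturally with the adelic viewpoint of Section~\ref{subsection_Siegel_Eisen_G(A)}. The one place you lean on the paper is the sentence ``$\psi^*$ factors through $\g \bmod p$'': what you actually use is that the $\Z_p$-version of $\psi^*$ (equivalently, the value $h_\nu(\g)$ of the standard section) is well-defined on $P_{0,r}(\Z_p)\, w_\nu\, K_p$ and agrees with the global $\psi^*$ on $\Sp(r,\Z)$. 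That is true and is precisely the content of the lemma preceding the definition of $I_p(s,\w)_{K_p}$ (specialized to $\mu=1$), so the dependence is harmless, but it would be worth saying so explicitly rather than packaging it as ``factors through $\g \bmod p$''.
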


\begin{proof}
For $R \in \Sym^r(\Q)$, there exists $U,V \in \SL_r(\Z)$ such that
\[ R = U\begin{pmatrix} \la_1/\delta_1 \\ & \ddots & \\ && \la_r/\delta_r \end{pmatrix}V, \quad \delta_i > 0, \ \delta_i \mid \delta_{i+1}, \  (\delta_i, \la_i)=1, \]
by the elementary divisor theorem. Put
\[ C = \begin{pmatrix} \delta_1 \\ & \ddots \\ && \delta_r \end{pmatrix} U^{-1}, \quad D = \begin{pmatrix} \la_1 \\ & \ddots \\ && \la_r \end{pmatrix} V, \]
then $(C,D) \in \mathcal{M}_r$ and $R = C^{-1}D$.
Assume that $R \in \Sym^r(\Q)^{(\nu)}$ for some $\nu$, then $(p, \delta_i) = 1$ for $1 \le i \le \nu$ and $p \mid \delta_i$ for $\nu+1 \le i \le r$.  Since $(CU, D \, \tp  U^{-1})$ is also symmetric and co-prime, if we put $W = V  \, \tp  U^{-1}$ and write
\[ W = \begin{pmatrix} W_1 & W_2 \\ W_3 & W_4 \end{pmatrix} \quad W_1 \in M_\nu(\Z), \ W_4 \in M_{r-\nu}(\Z), \]
then $W_3 \equiv 0 \bmod p$. In this case we can show 
\[ \widetilde{\psi}(R) = \psi(\det W_4) \prod_{i=1}^\nu \psi(\delta_i) \prod_{i=\nu+1}^r \psi(\la_i).\] 

Next we consider $R_0$. Put $\delta = \delta(R) = \prod_{i=1}^r \delta_i$. Let $e = \ord_p \delta$ and  $\delta = p^e \delta'$. If we write $1/\delta = m/p^e + l/\delta'$, then we can take $R_0 = \delta'm R$. We also put $e_i = \ord_p \delta_i$ for $\nu+1 \le i \le r$ and write $\delta_i = p^{e_i} \delta'_i$. Then $R_0 = C_0^{-1} D_0$ with
$C_0 = \diag(1, \ldots, 1, p^{e_{\nu+1}}, \ldots, p^{e_r})U^{-1}$ and
\[ D_0  = \begin{pmatrix} * \\ & m \delta' \la_{\nu+1}/\delta'_{\nu+1}  \\ && \ddots \\ &&& m \delta' \la_{r}/\delta_{r}' \end{pmatrix} V. \]
Thus we have proved that $R \in \Sym^r(\Q)^{(\nu)}$ if and only if $R_0 \in \Sym^r(\Q)^{(\nu)}_p$. Moreover, since $m \delta' \equiv 1 \bmod p^e$ by definition, we have
\[ \widetilde{\psi}(R_0) = \psi(\det W_4) \prod_{i=\nu+1}^r \psi(\la_i) \prod_{i=\nu+1}^r \psi(\delta'_i)^{-1}. \]
Since $\delta' = \bigl( \prod_{i=1}^\nu \delta_i \bigr) \bigl( \prod_{i=\nu+1}^r \delta'_i \bigr)$, we have $\widetilde{\psi}(R) = \widetilde{\psi}(R_0) \psi(\delta')$, which proves the lemma.
\end{proof}

We define the local Siegel series. For a prime number $q$, put
\[ \mathcal{M}_r(q) = \left\{(C,D) \in M_{r,2r}(\Z_q) \Bigm|  \begin{array}{l} \text{$(C,D)$ is symmetric and co-prime,} \\ \text{$\det C = q^l$ for some $l \ge 0$}  \end{array} \right\}. \]
Then the map $\SL_r(\Z_q) \backslash \mathcal{M}_r(q) \to \Sym^r(\Q_q)$, $(C,D) \mapsto C^{-1} D$ is bijective. For $T = C^{-1} D \in \Sym^r(\Q_q)$ we put $\delta_q(R) = \det C$. We also set for $0 \le \nu \le r$
\begin{align*} \mathcal{M}^\nu_r(p) & = \{(C,D) \in \mathcal{M}_r(p) \mid \rank(C \bmod p) = \nu \}, \\
 \Sym^r(\Q_p)^{(\nu)} & = \{ R = C^{-1} D \in \Sym^{r}(\Q_p) \mid (C,D) \in \mathcal{M}^\nu_r(p) \}.
\end{align*}
The function $\widetilde{\psi}_p$ on $\Sym^r(\Q_p)^{(\nu)}$ is defined similarly to $\widetilde{\psi}$. For $T \in M_r(\Z_q)$, put $\epi_q(T) = \exp  2 \pi \sqrt{-1} \varpi \bigl( \Tr(T) \bigr)$, with natural bijection $\varpi: \Q_q/\Z_q \simeq \bigcup_{i \ge 0} \dfrac{1}{q^i} \Z/\Z$.
The local Siegel series are defined as 
\begin{align*}
S_r(\psi,N,s)_q & = \sum_{R \in \Sym^r(\Q_q/\Z_q) } \psi(\delta_q(R)) \delta_q(R)^{-s}  \epi_q(RN), \quad (q \ne p) \\ 
 S_r^{\nu}(\psi, N,s)_p & = \sum_{R \in \Sym^r(\Q_q/\Z_q)^{(\nu)}} \widetilde{\psi}_p(R) \delta_p(R)^{-s}  \epi_p(RN).
\end{align*}

It is known that these series are rational functions in $q^{-s}$; thus by abuse of language we also write them $S_r(\psi,N,q^{-s})_q$ or $S_r^{\nu}(\psi,N,p^{-s})_p$. The ordinary Siegel series are given by
\[ S_r(N,q^{-s})_q = \sum_{R \in \Sym^r(\Q_q/\Z_q)} \delta_q(R)^{-s} \epi_q(RN), \]
then we have $S_r(\psi,N,q^{-s})_q = S_r(N,\psi(q)q^{-s})_q$ for $q \ne p$.

By Lemma \ref{lem_euler_product_S_w_r}, we have the follwing.

\begin{lemma} The Siegel series $\mathcal{S}_r^{\nu}(\psi,N,s)$ has an Euler product expression
\[ \mathscr{S}_r^{\nu}(\psi, N, s ) = S_{r}^{\nu}(\psi, N,s)_p \, \prod_{q \ne p} S_r (\psi,N,s)_q. \]
\end{lemma}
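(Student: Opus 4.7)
The plan is to apply a Chinese Remainder Theorem type decomposition for symmetric matrices together with the multiplicativity statements already proved in Lemma \ref{lem_euler_product_S_w_r}. The natural isomorphism $\Q/\Z \simeq \bigoplus_q \Q_q/\Z_q$ induces a bijection $\Sym^r(\Q/\Z) \simeq \bigoplus_q \Sym^r(\Q_q/\Z_q)$, so any class $R \in \Sym^r(\Q/\Z)$ corresponds to a tuple $(R_0; R_{q_1}, \ldots, R_{q_s})$ with $R_0 \in \Sym^r(\Q_p/\Z_p)$, each $R_{q_i} \in \Sym^r(\Q_{q_i}/\Z_{q_i})$, and only finitely many components nonzero. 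This is exactly the decomposition $R \equiv R_0 + \sum_i R_i$ appearing in Lemma \ref{lem_euler_product_S_w_r}.

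Next, I would translate each factor in the summand under this decomposition. By Lemma \ref{lem_euler_product_S_w_r}, the condition $R \in \Sym^r(\Q/\Z)^{(\nu)}$ is equivalent to $R_0 \in \Sym^r(\Q_p/\Z_p)^{(\nu)}$ with no constraint on the $R_{q_i}$; the quantity $\delta(R)$ factors as $\delta_p(R_0) \prod_i \delta_{q_i}(R_{q_i})$; and the character $\widetilde{\psi}(R)$ factors as $\widetilde{\psi}_p(R_0) \prod_i \psi(\delta_{q_i}(R_{q_i}))$. For the exponential factor I would use linearity of the trace together with the additive decomposition of $\Q/\Z$: writing $\Tr(RN) \equiv \Tr(R_0 N) + \sum_i \Tr(R_i N) \pmod{\Z}$, one obtains $\epi(RN) = \epi_p(R_0 N) \prod_i \epi_{q_i}(R_{q_i} N)$.

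Putting these factorizations together, within the region of absolute convergence the defining series for $\mathscr{S}_r^\nu(\psi,N,s)$ separates into independent local sums. The sum over $R_0 \in \Sym^r(\Q_p/\Z_p)^{(\nu)}$ is $S_r^\nu(\psi,N,s)_p$ by definition, and for each prime $q \ne p$ the sum over $R_q \in \Sym^r(\Q_q/\Z_q)$ is $S_r(\psi,N,s)_q$. Since for all but finitely many primes $q$ the only contributing term is $R_q = 0$, the infinite product makes sense termwise, and the Euler product identity holds in the stated region; the general case then follows from the known rationality of each local factor in $q^{-s}$. The argument is essentially bookkeeping: the only substantive point, namely the multiplicativity of the character $\widetilde{\psi}$ across primes, is exactly what Lemma \ref{lem_euler_product_S_w_r} provides, so I do not expect any real obstacle beyond carefully matching the local and global definitions of $\delta$, $\widetilde{\psi}$, and $\epi$.
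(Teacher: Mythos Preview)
Your proposal is correct and follows exactly the approach the paper intends: the paper simply writes ``By Lemma \ref{lem_euler_product_S_w_r}, we have the following'' and states the Euler product, so your argument is just a careful unpacking of that citation. The only bookkeeping you flag---matching the global objects $\delta$, $\widetilde{\psi}$, $\epi$ on $\Sym^r(\Q)_q$ with their local counterparts $\delta_q$, $\widetilde{\psi}_p$, $\epi_q$ on $\Sym^r(\Q_q)$---is indeed routine, and the multiplicativity of $\delta$ is recorded in the paper just before Lemma \ref{lem_euler_product_S_w_r}.
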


In order to give an explicit form of the functional equations, it suffices to compute the constant term of the Fourier expansion. For the Eisenstein series $E^n_{k,\psi}(w_\nu; Z,s)$, the constant term is given by
\[ \det(Y)^{s-k/2} \mathscr{S}_r^\nu(\psi,0,2s) \sum_{r=\nu}^n \sum_{Q \in \Z^{(n,r)}_{\rm prim}/ \GL_r(\Z)} \xi_r \Bigl( Y[Q], 0, s+\frac{k}{2}, s-\frac{k}{2} \Bigr). \]
 By \cite[(4.9), (4,34K)]{Shi1},  we can write it $\sum_{r=\nu}^n \mathscr{S}_r^\nu(\psi,0,2s) C_r(Y,k,s)$ with
\begin{align} C_r(Y,k,s) &  =  (\sqrt{-1})^{-r k} 2^{r(r+3)/2-2r s} \pi^{r(r+1)/2} \dfrac{\Gamma_r \left( 2s-\dfrac{r+1}{2} \right)}{\Gamma_r \left(s+\dfrac{k}{2} \right) \Gamma_r\left(s-\dfrac{k}{2}\right)} \notag \\ 
& \qquad \times (\det Y)^{s-k/2} \zeta_r^{(n)}\left(Y,2s-\dfrac{r+1}{2} \right). \label{eq_C(Y,k,s)}
\end{align}
Here $\Gamma_r(s) = \pi^{r(r-1)/4} \prod_{i=0}^{r-1} \Gamma(s-i/2)$ and $\zeta^{(n)}_r(Y,s)$ is the kind of Epstein zeta function given by
\[ \zeta^{(n)}_r(Y,s) = \sum_{Q \in \Z^{(n,r)}_{\rm prim}/\GL_r(\Z)} \det(Y[Q])^{-s}. \]
Note that $\zeta^{(n)}_n(Y,s) = (\det Y)^{-s}$ and we define $\zeta^{(n)}_0(Y,s) = 1$. The analytic property of $\zeta^{(n)}_r(Y,s)$ is studied in \cite{Miz}.

It is easy to see that
\[C_r(p^{-1}Y,k,s) = p^{l(s,r)} C_r(Y,k,s),\]
here $l(s,r)$ is defined in Corollary \ref{cor_eigenvalues_U(p)}. By (\ref{eq_U(p)_with_parameter_s}) and the construction of the $U(p)$-eigenfunction $E^{n,(\nu)}_{k,\psi}(Z,s)$ (Proposition \ref{prop_eigen_Eisenstein_series}), we have the following.

\begin{lemma} \label{lemm_const_term_eigen_function}
The constant term of the Fourier expansion of $E^{n,(\nu)}_{k,\psi}(Z,s)$ is given by $\mathscr{S}^{\nu}_\nu(\psi,0,2s) C_\nu(Y,k,s)$.
\end{lemma}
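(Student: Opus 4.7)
The plan is to combine the $U(p)$-eigenfunction property of $E^{n,(\nu)}_{k,\psi}(Z,s)$ with the scaling relation $C_r(p^{-1}Y,k,s)=p^{l(s,r)}C_r(Y,k,s)$ noted in the paragraph preceding the lemma. First I would read off the constant term of each $E^n_{k,\psi}(w_j;Z,s)$ from Proposition \ref{prop_Fourier_exp_w_r}: since $Q\in\Z^{(n,r)}_{\mathrm{prim}}$ has rank $r$, the equality $N[\tp Q]=Q N\tp Q=0$ forces $N=0$, so only the $N=0$ summand contributes to the $X$-independent part, and formula (\ref{eq_C(Y,k,s)}) yields
\[ \text{constant term of } E^n_{k,\psi}(w_j;Z,s) = \sum_{r=j}^n \mathscr{S}_r^j(\psi,0,2s)\, C_r(Y,k,s). \]
Substituting this into the definition of $E^{n,(\nu)}_{k,\psi}(Z,s)$ from Proposition \ref{prop_eigen_Eisenstein_series}, its constant term takes the form $F(Y,s)=\sum_{r=\nu}^n \alpha_r\, C_r(Y,k,s)$, with $\alpha_\nu=\mathscr{S}_\nu^\nu(\psi,0,2s)$ read off immediately since $C_\nu$ appears only in the constant term of $E^n_{k,\psi}(w_\nu;Z,s)$.

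The remaining task is to show $\alpha_r=0$ for $r>\nu$. Since $U(p)E^{n,(\nu)}_{k,\psi}(Z,s)=p^{l(s,\nu)}E^{n,(\nu)}_{k,\psi}(Z,s)$, applying (\ref{eq_U(p)_with_parameter_s}) and comparing the $T=0$ Fourier coefficients of both sides gives $F(p^{-1}Y,s)=p^{l(s,\nu)}F(Y,s)$; together with the scaling law for $C_r$ this yields
\[ \sum_{r=\nu}^n \alpha_r\bigl(p^{l(s,r)}-p^{l(s,\nu)}\bigr)C_r(Y,k,s)=0. \]
For generic $s$ the exponents $l(s,r)=n(k/2-s)+2sr-r(r+1)/2$ are pairwise distinct, so $C_\nu,\ldots,C_n$, being eigenvectors of $Y\mapsto p^{-1}Y$ with distinct eigenvalues, are linearly independent as functions of $Y$. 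Hence $\alpha_r=0$ for $r\ne\nu$, proving the lemma for generic $s$; the identity then extends by meromorphic continuation.

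The attractive feature of this approach is that it bypasses any direct manipulation of Siegel series, and in fact yields the nontrivial identities $\mathscr{S}_r^\nu(\psi,0,2s)+\sum_{j=\nu+1}^r b_\psi(s)_{\nu j}\mathscr{S}_r^j(\psi,0,2s)=0$ for $r>\nu$ as a byproduct. The main technical point to verify carefully is the linear independence of $C_\nu,\ldots,C_n$ for generic $s$, but this is immediate from their distinct scaling behaviors combined with the fact that each $C_r$, being a nonzero product of gamma factors and an Epstein-type zeta function, is a nonzero meromorphic function of $s$.
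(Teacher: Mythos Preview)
Your proof is correct and follows essentially the same approach as the paper: the paper's one-sentence justification cites precisely the scaling relation $C_r(p^{-1}Y,k,s)=p^{l(s,r)}C_r(Y,k,s)$, the $U(p)$-action on Fourier coefficients (\ref{eq_U(p)_with_parameter_s}), and the eigenfunction construction of Proposition \ref{prop_eigen_Eisenstein_series}, which are exactly the three ingredients you assemble. Your write-up simply makes explicit the linear-independence step (distinct eigenvalues under $Y\mapsto p^{-1}Y$) that the paper leaves implicit.
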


For the Siegel series, clearly we have $S_\nu^{\nu}(\psi,0,2s)_p = 1$. On the other hand for $q \ne p$, the following result is known.

\begin{lemma}[{\cite[Proposition 5.1]{Shi2}, \cite[Corollary 2]{Ki}}] \label{lem_unramified_S(psi,0)} For $q \ne p$ we have
\[ S_\nu(\psi,0,2s)_q = \frac{1-\psi(q)q^{-2s}}{1-\psi(q)q^{\nu-2s}} \prod_{i=1}^{[\nu/2]} \frac{1-q^{2i-4s}}{1-q^{2\nu+1 -2i-4s}}. \]
\end{lemma}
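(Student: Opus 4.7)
The plan is to use the observation already recorded above that for $q\ne p$ one has $S_\nu(\psi,N,s)_q = S_\nu(N,\psi(q)q^{-s})_q$, which reduces the problem to evaluating the classical (unramified) Siegel series $S_\nu(0,X)_q$ at $N=0$ and then substituting $X=\psi(q)q^{-2s}$. Since $\psi$ is either trivial or quadratic, $\psi(q)^2=1$, so the substitution will only introduce genuine $\psi(q)$-dependence in those factors of $S_\nu(0,X)_q$ that involve odd powers of $X$; the others will automatically become $\psi$-independent functions of $q^{-4s}$. This structural observation already explains why the stated formula has $\psi(q)$ only in the outer factor $(1-\psi(q)q^{-2s})/(1-\psi(q)q^{\nu-2s})$ and not in the product over $i$.

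The main work is therefore the evaluation of $S_\nu(0,X)_q$. I parametrise $R\in \Sym^\nu(\Q_q/\Z_q)$ via the bijection $\SL_\nu(\Z_q)\backslash \mathcal{M}_\nu(q)\simeq \Sym^\nu(\Q_q)$, $(C,D)\mapsto C^{-1}D$, and use the elementary divisor theorem to reduce $C$ to the diagonal form $\mathrm{diag}(q^{a_1},\ldots,q^{a_\nu})$ with $0\le a_1\le\cdots\le a_\nu$. The symmetry condition $CD^t=DC^t$ then translates into a linear condition on $D$ that can be analysed block by block; for each fixed sequence $(a_i)$ one counts the number of admissible cosets of $D$ modulo the appropriate subgroup of $M_\nu(\Z_q)$, and this count is given by a classical $q$-combinatorial expression involving orders of orthogonal-type groups over $\F_q$ (essentially what appeared in the computation of $W^l_m(\chi_0)$ above).

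Having this count, I assemble the generating function $S_\nu(0,X)_q=\sum_e a_e X^e$ and evaluate it by induction on $\nu$. The base $\nu=1$ is immediate, $S_1(0,X)_q=(1-X)/(1-qX)$. For the inductive step I peel off the largest elementary divisor $a_\nu$: symmetric matrices of size $\nu$ with prescribed leading divisor $q^{a_\nu}$ fibre (after a Bruhat-type reduction) over symmetric matrices of size $\nu-1$, and the contribution of the "new" row-column satisfies a geometric-series identity that produces exactly one additional factor of the shape $(1-q^{2i}X^2)/(1-q^{2\nu+1-2i}X^2)$ when the new block is non-degenerate, and the outer factor $(1-X)/(1-q^\nu X)$ when it is degenerate. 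Summing these contributions yields the product formula for $S_\nu(0,X)_q$. This is essentially the content of \cite[Prop.~5.1]{Shi2} and \cite[Cor.~2]{Ki}, to which one can appeal directly.

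The last step is to substitute $X=\psi(q)q^{-2s}$ and simplify using $\psi(q)^2=1$; this gives the displayed formula. The principal technical obstacle is the combinatorial count of symmetric matrices of prescribed elementary divisor type, i.e.\ the induction step, where one has to track orthogonal group orders carefully to recognise the clean product form; once this is in hand, the rest of the argument is a routine substitution.
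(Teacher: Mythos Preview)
The paper does not supply its own proof of this lemma; it simply records the statement with the attributions to \cite[Proposition~5.1]{Shi2} and \cite[Corollary~2]{Ki} and uses the result as input. Your proposal is therefore strictly more than what the paper does: you give a reasonable outline of the standard argument (reduce to the ordinary series via $S_\nu(\psi,N,s)_q=S_\nu(N,\psi(q)q^{-s})_q$, evaluate $S_\nu(0,X)_q$ by elementary-divisor parametrisation and induction on $\nu$, then substitute $X=\psi(q)q^{-2s}$ and use $\psi(q)^2=1$), and you correctly identify that the substantive combinatorics lives in the inductive step, for which you ultimately defer to the same references. That deferral is exactly the paper's ``proof'', so your approach and the paper's are aligned; the extra sketch you provide is consistent with what one finds in Shimura and Kitaoka, and the base case $S_1(0,X)_q=(1-X)/(1-qX)$ and the observation about where $\psi(q)$ survives are both correct.
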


Thus we can write
\[ \mathscr{S}_\nu^{\nu}(\psi,0,2s) = \frac{L(2s-\nu,\psi)}{L(2s,\psi)} \prod_{i=1}^{[\nu/2]} \frac{L(4s+2i-2\nu-1,\chi_0)}{L(4s-2i,\chi_0)}. \]

\begin{remark} As far as the author knows, an explicit formula of $S_r^{\nu}(\psi,0,2s)_p$ is not yet known in general. However by looking at the constant term of the Fourier expansion we have
\[ E_{k,\psi}^n(w_\nu;Z,s) = \sum_{r=\nu}^n S_r^{\nu}(\psi,0,2s)_p  \, E_{k,\psi}^{n,(r)}(Z,s), \]
since for $r \ge \nu$ we have
\[ S_r^{\nu}(\psi,0,2s)_p \cdot \mathscr{S}_r^{r}(\psi,0,2s) = \mathscr{S}_r^{\nu}(\psi,0,2s). \]
Thus $S_r^{\nu}(\psi,0,2s)_p$ is the $(\nu,r)$-component of the matrix $B_\psi(s)^{-1}$, here $B_\psi(s)$ is the strict upper triangular matrix defined in (\ref{eq_b_psi}).
\end{remark}

\section{Functional equations} \label{section_functional_eq}

Now we consider the functional equations of Siegel Eisenstein series of level $p$. 

\subsection{The case of the trivial character}
First we consider the case of $\psi = \chi_0$. 
Let $\kappa_n = (n+1)/2$.
First we remark the following fact.

\begin{lemma} \label{lem_FE_rough_form}
Let $E^{n,(\nu)}_{k, \chi_0}(Z,s)$ be the $U(p)$-eigenfunction defined as above. Then there exists a functional equation   
\[ E^{n,(\nu)}_{k, \chi_0}(Z,s) = C(s) E^{n, (n-\nu)}_{k,\chi_0}(Z, \kappa_n-s), \]
here $C(s)$ is a function in $s$.
\end{lemma}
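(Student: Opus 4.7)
The plan is to derive the functional equation by combining Theorem \ref{thm_FE_Siegel_Eisenstein} with the $U(p)$-eigenstructure developed in Section \ref{section_U(p)-operator}, using the adelic interpretation of $\mathcal{E}_{k,s}(\G_0^n(p),\chi_0)$ as a stepping stone. Since $\chi_0$ is trivial, the full-level series $E_k^n(Z,s)$ lies in $\mathcal{E}_{k,s}(\G_0^n(p),\chi_0)$, and the double-coset decomposition $\Sp(n,\Z)=\bigsqcup_{j=0}^n P_{0,n}(\Z)w_j\G_0^n(p)$ gives
\[ E_k^n(Z,s)=\sum_{j=0}^n E_{k,\chi_0}^n(w_j;Z,s), \]
so Theorem \ref{thm_FE_Siegel_Eisenstein} yields one scalar functional equation among the $E_{k,\chi_0}^n(w_j;Z,s)$. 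The adelic identification (\ref{isom_degenerate_principal_eisen}) then packages the full $(n+1)$-dimensional functional equation as the action of Langlands' local intertwining operator $M(s)\colon I_p(2s,\w)\to I_p(2(\kappa_n-s),\w)$ on $K_p$-fixed vectors.

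Concretely, I would first produce an invertible meromorphic matrix $F(s)$ with
\[ \bigl(E_{k,\chi_0}^n(w_j;Z,s)\bigr)_{0\le j\le n} = F(s)\,\bigl(E_{k,\chi_0}^n(w_j;Z,\kappa_n-s)\bigr)_{0\le j\le n}, \]
obtained as the matrix of $M(2s)$ in the basis $\{h_j\}$ of $I_p(2s,\w)_{K_p}$. Its meromorphy in $s$ follows from the classical meromorphic continuation of each $E_{k,\chi_0}^n(w_j;Z,s)$, and its existence in vector-valued form is Langlands' functional equation for the Eisenstein series, whose classical incarnation at the level-$1$ cusp is precisely Theorem \ref{thm_FE_Siegel_Eisenstein}. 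Next, I would observe that $F(s)$ commutes with $U(p)$: on the adelic side the operator in (\ref{eq_def_U(p)_adele}) is convolution against a fixed bi-$K_p$-invariant function on $G(\Q_p)$ and hence commutes with the $G(\Q_p)$-equivariant intertwiner $M(s)$, and Lemma \ref{lemma_compatible_U(p)} transfers this to the classical identity $F(s)\circ U(p)=U(p)\circ F(s)$.

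The final step is an eigenvalue-matching argument. By Corollary \ref{cor_eigenvalues_U(p)} the $U(p)$-eigenvalues on $\mathcal{E}_{k,s}(\G_0^n(p),\chi_0)$ are $\{p^{l(s,\nu)}\}_{\nu=0}^n$, mutually distinct for generic $s$. The identity $l(s,\nu)=l(\kappa_n-s,n-\nu)$ noted in the introduction shows that $F(s)\,E^{n,(n-\nu)}_{k,\chi_0}(Z,\kappa_n-s)$ is a $U(p)$-eigenvector with eigenvalue $p^{l(s,\nu)}$, so it lies in the one-dimensional eigenspace spanned by $E^{n,(\nu)}_{k,\chi_0}(Z,s)$; this yields the desired proportionality with some meromorphic $C(s)$.

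The main obstacle is verifying $F(s)\circ U(p)=U(p)\circ F(s)$ in a rigorous and book-keepable way: although transparent on the adelic side, tracking the classical normalizations through (\ref{isom_degenerate_principal_eisen}) and the analytic continuation of the entries of $F(s)$ requires some care. The actual determination of $C(s)$ is a separate (and more delicate) task handled afterwards by comparing constant terms via the formulas of Section \ref{section_Fourier_exp}.
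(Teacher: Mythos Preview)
Your argument is correct, but it takes a heavier route than the paper. You invoke the full Langlands functional equation to produce a matrix $F(s)$ intertwining $\mathcal{E}_{k,s}$ with $\mathcal{E}_{k,\kappa_n-s}$, then argue that $F(s)$ commutes with $U(p)$ and match eigenvalues. The paper instead stays entirely classical and uses \emph{only} the scalar level-$1$ functional equation (Theorem~\ref{thm_FE_Siegel_Eisenstein}): by \cite[Section~4, Proposition]{Boe} the iterates $U(p^i)E_k^n(Z,s)$ for $0\le i\le n$ span $\mathcal{E}_{k,s}(\G_0^n(p),\chi_0)$, so the level-$1$ series is a cyclic vector for $U(p)$ and one may write
\[
E^{n,(\nu)}_{k,\chi_0}(Z,s)=A(s)\Bigl(\prod_{j\ne\nu}(U(p)-p^{l(s,j)})\Bigr)E_k^n(Z,s)
\]
for some $A(s)$. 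Applying $E_k^n(Z,s)=D(s)E_k^n(Z,\kappa_n-s)$ and the identity $l(s,j)=l(\kappa_n-s,n-j)$ then gives the lemma directly, with $C(s)=A(s)D(s)A(\kappa_n-s)^{-1}$. This avoids both the intertwining operator and any commutation check.

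What each approach buys: the paper's argument is self-contained and elementary, relying only on the already-known Theorem~\ref{thm_FE_Siegel_Eisenstein} and the cyclicity statement; it is special to $\chi_0$ because a level-$1$ anchor exists. Your approach is more uniform and is in fact exactly what the paper does for the quadratic character in Lemma~\ref{lem_existence_FE_chi_p}, where no level-$1$ series is available; so your proof would handle both cases at once. The ``obstacle'' you flag is not serious: $M(s)$ is defined by an integral over the unipotent radical on the left, while $U(p)$ in (\ref{eq_def_U(p)_adele}) is right convolution against a compactly supported function, so they commute tautologically on $I_p(2s,\w)_{K_p}$.
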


\begin{proof}
Let
\[ E^{n}_k (Z,s) = \det(Y)^{s-k/2} \sum_{\g \in P_{n,0}(\Z) \backslash \Sp(n,\Z)} j(\g,Z)^{-k} \, |j(\g,Z)|^{-2s+k} \]
be the Siegel Eisenstein series of level $1$. By \cite[Section 4, Proposition]{Boe}, we have
\[ \left\langle U(p^i) E^{n}_k (Z,s )  \bigm| 0 \le i \le n \right\rangle_\C =   \mathcal{E}_{k,s}(\G^n_0(p), \chi_0). \] 
By Corollary \ref{cor_eigenvalues_U(p)} the eigenvalues of the $U(p)$-operator are given by $p^{l(s,j)}$ $(0 \le j \le n)$ with $l(s,j) = n(k/2-s)+2sj-j(j+1)/2$. Thus we have
\[ E^{n, (\nu)}_{k,\chi_0} (Z,s ) = A(s) \Bigl( \prod_{j \ne \nu}(U(p)-p^{l(s,j)} ) \Bigr)  E^{n}_k (Z,s )   \]
for some function $A(s)$, that is independent of $Z$.

By \cite[Corollary 6.6]{Miz}, the functional equation of the form
\[ E^{n}_k (Z,s ) = D(s) E^{n}_k (Z,\kappa_n-s ) \]
holds. Thus
\begin{align*} E^{n,(\nu)}_{k,\chi_0} (Z,s ) & = A(s) \Bigl(\prod_{j \ne \nu}(U(p)-p^{l(s,j)}) \Bigr)  E^{n}_k (Z,s)  \\
& = A(s) D(s) \Bigl( \prod_{j \ne \nu}(U(p)-p^{l(s,j)}) \Bigr)  E^{n}_k (Z,\kappa_n-s).
\end{align*}
Since  $l(s,j) = l\bigl(\kappa_n-s,n-j \bigr)$, we have
\[  E^{n,(\nu)}_{k,\chi_0} (Z,s) =A(s)D(s)A(\kappa_n-s )^{-1}  E^{n,(n-\nu)}_{k,\chi_0} (Z,\kappa_n-s),\]
that proves the lemma.
\end{proof}

In order to write down the functional equations explicitly, it suffices to see the constant terms of the Fourier expansions. 
Let
\[ \xi(s) = \pi^{-s/2} \Gamma\left( \frac{s}{2} \right) \zeta(s) =\xi(1-s) \]
be the completed Riemann zeta function.
Though we use the same letter as the confluent hypergeometric function $\xi_n(g,s,\alpha, \beta)$ appeared in the previous section,  it will not occur confusion.
Following \cite{Miz} we define
\begin{align*} 
& \mathbb{E}_{k,\chi_0}^{n,(\nu)}(Z,s)  = \g_\nu(\chi_0,s) \dfrac{\Gamma_n\left(s + \dfrac{k}{2} \right)}{\Gamma_n(s)} \, \xi(2s) \prod_{j=1}^{[n/2]} \xi(4s-2j) \,  E_{k,\chi_0}^{n,(\nu)}(Z,s),
\end{align*}
with
\[ \g_\nu(\chi_0,s) =  \frac{1-p^{-2s}}{1-p^{\nu-2s}} \prod_{i=1}^{[\nu/2]} \frac{1-p^{2i-4s}}{1-p^{2\nu+1 -2i-4s}}.
\]
Then by (\ref{eq_C(Y,k,s)}), Lemma \ref{lemm_const_term_eigen_function} and Lemma \ref{lem_unramified_S(psi,0)}, the constant term $\Lambda_{k,\chi_0}^{n,(\nu)}(Y,s)$ of the Fourier expansion of $\mathbb{E}^{n,(\nu)}_{k,\chi_0}(Z,s)$ is given by 
\begin{align*} \Lambda_{k,\chi_0}^{n,(\nu)}(Y,s) & =  (\sqrt{-1})^{-\nu k} 2^{\nu(\nu+3)/2-2\nu s} \pi^{\nu(\nu+1)/2} \dfrac{\Gamma_\nu \left( 2s-\dfrac{\nu+1}{2} \right)\Gamma_n\left(s + \dfrac{k}{2} \right)}{\Gamma_\nu \left(s+\dfrac{k}{2} \right) \Gamma_\nu\left(s-\dfrac{k}{2}\right)\Gamma_n(s)}  \\
& \quad \times \dfrac{\zeta(2s-\nu)}{\zeta(2s)} \prod_{i=1}^{[\nu/2]} \dfrac{\zeta (4s+2i-2\nu-1)}{\zeta (4s-2i)} \xi(2s) \prod_{j=1}^{[n/2]} \xi(4s-2j) \\
& \quad \times (\det Y)^{s-k/2} \zeta_\nu^{(n)}\left(Y,2s-\dfrac{\nu+1}{2} \right).
\end{align*}

\begin{proposition} \label{prop_FE_constant_term_chi_0}
We have
\[ \Lambda_{k,\chi_0}^{n,(\nu)}(Y,s) = \Lambda_{k,\chi_0}^{n,(n-\nu)}(Y,\kappa_n-s). \]
\end{proposition}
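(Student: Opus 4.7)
The plan is a direct computation using the explicit formula for the constant term displayed in the paragraph just before the proposition:
\[
\Lambda_{k,\chi_0}^{n,(\nu)}(Y,s) = (\text{archimedean }\Gamma\text{ and }\xi\text{ factors}) \cdot (\det Y)^{s-k/2}\,\zeta_\nu^{(n)}\!\left(Y, 2s-\tfrac{\nu+1}{2}\right) \cdot \g_\nu(\chi_0,s).
\]
Forming the ratio $\Lambda_{k,\chi_0}^{n,(\nu)}(Y,s)/\Lambda_{k,\chi_0}^{n,(n-\nu)}(Y,\kappa_n-s)$, the only $Y$-dependent part is $(\det Y)^{2s-\kappa_n}$ together with the two Epstein-type zeta values $\zeta_\nu^{(n)}(Y,2s-(\nu+1)/2)$ and $\zeta_{n-\nu}^{(n)}(Y,(n+\nu+1)/2-2s)$. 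I would invoke Mizumoto's functional equation for the completed Epstein-type zeta function $\zeta_r^{(n)}(Y,\cdot)$ (the same tool he uses to derive Theorem \ref{thm_FE_Siegel_Eisenstein}), which converts this $Y$-dependent ratio into a product of completed $\xi$-factors and $\Gamma$-factors that is independent of $Y$.

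After this reduction, what remains is a $Y$-independent identity between: (i) the explicit gamma and $\xi$ factors produced by the Mizumoto functional equation; (ii) the archimedean normalization factors $\Gamma_n(s+k/2)/\Gamma_n(s)$, $\xi(2s)$ and $\prod_{j=1}^{[n/2]}\xi(4s-2j)$ compared with their images under $s \mapsto \kappa_n-s$; and (iii) the $p$-local ratio $\g_\nu(\chi_0,s)/\g_{n-\nu}(\chi_0,\kappa_n-s)$. Piece (i) combined with (ii) can be simplified using $\xi(t) = \xi(1-t)$ together with the reflection and duplication consequences of $\Gamma_n(s)=\pi^{n(n-1)/4}\prod_{i=0}^{n-1}\Gamma(s-i/2)$. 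After collecting terms, the archimedean contribution should reduce to an elementary rational function of $p^{-s}$, and one must verify that it equals the explicit $p$-factor ratio given by piece (iii). This last verification is a routine manipulation of the finite products defining $\g_\nu(\chi_0,s)$.

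The main obstacle is the combinatorial bookkeeping. Under $s\mapsto\kappa_n-s$ the factor $\prod_{j=1}^{[n/2]}\xi(4s-2j)$ is sent, via $\xi(t)=\xi(1-t)$, to a product of $\xi$-values at odd shifts $4s-(2n+1-2j)$ rather than even shifts, so it does not formally match the original product; the discrepancy must be absorbed exactly by the complementary $\xi$-values produced by the Mizumoto functional equation for $\zeta_\nu^{(n)}$ (whose argument shifts from $2s-(\nu+1)/2$ to $(n+\nu+1)/2-2s$) together with the product appearing in the definition of $\g_\nu(\chi_0,s)$. Ensuring all parity subcases of $n$ and $\nu$ (which determine the lengths of the products $\prod_{j=1}^{[n/2]}$ and $\prod_{i=1}^{[\nu/2]}$) work out consistently is where the bulk of the detailed verification lies. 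Alternatively, one can shortcut by noting that Lemma \ref{lem_FE_rough_form} already guarantees $\Lambda_{k,\chi_0}^{n,(\nu)}(Y,s)/\Lambda_{k,\chi_0}^{n,(n-\nu)}(Y,\kappa_n-s)$ is a scalar function of $s$ alone, so matching any single $Y$-coefficient (for instance, comparing leading behaviors of $\zeta_\nu^{(n)}(Y,\cdot)$ in a suitable asymptotic regime) pins down this scalar, and the remaining work is purely to evaluate it to $1$.
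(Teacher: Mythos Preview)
Your overall plan---apply Mizumoto's functional equation for $\zeta_\nu^{(n)}(Y,\cdot)$ (Lemma~\ref{lem_FE_epstein_zeta}) to handle the $Y$-dependent part, then verify a residual $\Gamma$/$\xi$ identity---is exactly the paper's route. But your treatment of $\g_\nu(\chi_0,s)$ contains a genuine misconception that would derail the computation.

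You write $\Lambda_{k,\chi_0}^{n,(\nu)}(Y,s)$ as $(\text{archimedean }\Gamma,\xi\text{ factors})\times(\text{Epstein zeta})\times \g_\nu(\chi_0,s)$ and propose to match the ratio $\g_\nu(\chi_0,s)/\g_{n-\nu}(\chi_0,\kappa_n-s)$ against an ``elementary rational function of $p^{-s}$'' left over from the archimedean side. Both halves of this plan are wrong. First, the displayed formula for $\Lambda$ just before the proposition has \emph{already absorbed} $\g_\nu$: the Siegel series $\mathscr{S}_\nu^\nu(\chi_0,0,2s)$ contributes ratios of $L(\chi_0,\cdot)$ (Riemann zeta with the $p$-Euler factor removed), and the sole purpose of $\g_\nu(\chi_0,s)$ is to reinsert those missing $p$-Euler factors, converting $L(\chi_0,\cdot)$ into the full $\zeta(\cdot)$ that you see in the display. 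After this absorption the prime $p$ appears nowhere in $\Lambda$. Second, and consequently, no genuinely archimedean manipulation can produce a rational function of $p^{-s}$; the $\Gamma/\xi$ side must simply reduce to $1$. If you keep $\g_\nu$ as a separate factor you are double-counting it and are forced to check $\g_\nu(\chi_0,s)=\g_{n-\nu}(\chi_0,\kappa_n-s)$, which is false already for $n=1$, $\nu=0$ (the left side is $1$, the right side is $(1-p^{2s-2})/(1-p^{2s-1})$).

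The paper avoids this by writing $\Lambda = G_\nu^*(s)\,Z_\nu(s)$, where $Z_\nu$ packages all completed $\xi$-values together with the Epstein zeta and $G_\nu^*$ collects the $\Gamma$-factors and elementary constants; both pieces are \emph{separately} invariant under $(s,\nu)\mapsto(\kappa_n-s,n-\nu)$, the first via Lemma~\ref{lem_FE_epstein_zeta} and $\xi(t)=\xi(1-t)$, the second via Lemma~\ref{lem_FE_Gamma_function}. There is no $p$-local step at all.
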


Together with Lemma \ref{lem_FE_rough_form} and Proposition \ref{prop_FE_constant_term_chi_0}, our main result follows.
\begin{theorem} \label{thm_FE_eigen_function_chi_0}
We have a functional equation
\[ \mathbb{E}^{n,(\nu)}_{k,\chi_0}(Z,s) = \mathbb{E}^{n,(n-\nu)}_{k,\chi_0}(Z, \kappa_n-s). \]
\end{theorem}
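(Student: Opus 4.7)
The plan is to combine Lemma~\ref{lem_FE_rough_form} with Proposition~\ref{prop_FE_constant_term_chi_0} to bootstrap a relation-up-to-scalar into an exact equality. Lemma~\ref{lem_FE_rough_form} already supplies a meromorphic function $C(s)$ (independent of $Z$) with
\[ E^{n,(\nu)}_{k,\chi_0}(Z,s) = C(s)\, E^{n,(n-\nu)}_{k,\chi_0}(Z,\kappa_n - s). \]
Writing $N_\nu(s) := \g_\nu(\chi_0,s)\, \Gamma_n(s+k/2)\Gamma_n(s)^{-1}\, \xi(2s)\prod_{j=1}^{[n/2]}\xi(4s-2j)$ for the normalization factor so that $\mathbb{E}^{n,(\nu)}_{k,\chi_0}(Z,s) = N_\nu(s)\, E^{n,(\nu)}_{k,\chi_0}(Z,s)$, one immediately rewrites the lemma as
\[ \mathbb{E}^{n,(\nu)}_{k,\chi_0}(Z,s) = \widetilde{C}(s)\, \mathbb{E}^{n,(n-\nu)}_{k,\chi_0}(Z,\kappa_n - s), \qquad \widetilde{C}(s) := \frac{N_\nu(s)\, C(s)}{N_{n-\nu}(\kappa_n - s)}. \]
Thus the task reduces to proving $\widetilde{C}(s) \equiv 1$.

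To pin down $\widetilde{C}(s)$, the next step is to compare constant Fourier coefficients in $X$ on both sides. Taking the constant term produces the identity
\[ \Lambda^{n,(\nu)}_{k,\chi_0}(Y,s) = \widetilde{C}(s)\, \Lambda^{n,(n-\nu)}_{k,\chi_0}(Y,\kappa_n - s). \]
But Proposition~\ref{prop_FE_constant_term_chi_0} asserts $\Lambda^{n,(\nu)}_{k,\chi_0}(Y,s) = \Lambda^{n,(n-\nu)}_{k,\chi_0}(Y,\kappa_n - s)$, so substituting this on the right and rearranging gives
\[ \bigl(\widetilde{C}(s) - 1\bigr)\, \Lambda^{n,(\nu)}_{k,\chi_0}(Y,s) \equiv 0 \]
as a meromorphic function of $(Y,s)$. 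The claim $\widetilde{C}(s) \equiv 1$ follows at once, provided the factor $\Lambda^{n,(\nu)}_{k,\chi_0}(Y,s)$ is not identically zero.

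The main obstacle, therefore, is verifying this non-vanishing. Using the explicit formula for $\Lambda^{n,(\nu)}_{k,\chi_0}(Y,s)$ displayed just before Proposition~\ref{prop_FE_constant_term_chi_0}, the constant term is a product of nonzero meromorphic $\Gamma$- and $\xi$-factors in $s$ times $(\det Y)^{s-k/2}\, \zeta^{(n)}_\nu\!\left(Y,\, 2s - \tfrac{\nu+1}{2}\right)$. The Epstein-type series $\zeta^{(n)}_\nu(Y,\cdot)$, whose analytic properties are recalled in \cite{Miz}, is a nontrivial meromorphic function of $(Y,s)$ (it converges to a non-constant function of $Y$ for $\re s \gg 0$ when $\nu \ge 1$, and equals $1$ when $\nu = 0$). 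Combined with the nontriviality of the prefactors, this yields $\Lambda^{n,(\nu)}_{k,\chi_0}(Y,s) \not\equiv 0$, so $\widetilde{C}(s) = 1$ and the theorem follows. All substantive analytic content has already been absorbed into Lemma~\ref{lem_FE_rough_form} (built on Mizumoto's level-$1$ functional equation) and Proposition~\ref{prop_FE_constant_term_chi_0}; the argument above is purely a rigidity principle extracting the exact equality from those two inputs.
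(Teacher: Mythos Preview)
Your proof is correct and follows essentially the same approach as the paper: the paper simply states that the theorem follows by combining Lemma~\ref{lem_FE_rough_form} with Proposition~\ref{prop_FE_constant_term_chi_0}, and you have spelled out this combination explicitly, including the non-vanishing of $\Lambda^{n,(\nu)}_{k,\chi_0}(Y,s)$ needed to conclude $\widetilde{C}(s)=1$.
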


We can also write down the functional equations in terms of the basis $\{ E^{n}_{k,\chi_0}(w_i;Z,s) \}_{0 \le i \le n}$ of the space $\mathcal{E}^n_{k,s}(\G^n_0(p),\chi_0)$. We define 
\[\mathbb{E}_{k,\chi_0}^{n}(w_\nu;Z,s)  = \dfrac{\Gamma_n\left(s + \dfrac{k}{2} \right)}{\Gamma_n(s)} \, \xi(2s) \prod_{j=1}^{[n/2]} \xi(4s-2j) \,  E_{k,\chi_0}^n(w_\nu;Z,s). \]

\begin{theorem} \label{thm_FE_chi_0_standard_basis}
Let $T_{\chi_0}(s)$ be the anti-diagonal matrix of size $n+1$, whose \\ $(n-\nu,\nu)$-component is $\g_\nu(\chi_0,s) \g_{n-\nu}(\chi_0, \kappa_n-s)^{-1}$ for $0 \le \nu \le n$. Then we have
\begin{align*}  \begin{pmatrix} \mathbb{E}^{n}_{k,\chi_0} (w_0;Z,\kappa_n-s ) \\ \vdots \\ \mathbb{E}^{n}_{k,\chi_0}(w_n;Z,\kappa_n-s ) \end{pmatrix} = B_{\chi_0}(\kappa_n-s)^{-1} T_{\chi_0}(s) B_{\chi_0}(s) \begin{pmatrix} \mathbb{E}^{n}_{k,\chi_0}(w_0;Z,s) \\ \vdots \\ \mathbb{E}^{n}_{k,\chi_0}(w_n;Z,s) \end{pmatrix},  
\end{align*}
here $B_{\chi_0}(s)$ is the upper triangular matrix defined in (\ref{eq_b_psi}).

\end{theorem}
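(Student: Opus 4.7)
The plan is to deduce this theorem directly from Theorem \ref{thm_FE_eigen_function_chi_0} by transporting the eigenfunction functional equation through the change-of-basis matrix supplied by Proposition \ref{prop_eigen_Eisenstein_series}. Since the deep analytic content already lies in the eigenfunction functional equation, what remains is an elementary exercise in linear algebra.

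Comparing the definitions of $\mathbb{E}^{n,(\nu)}_{k,\chi_0}(Z,s)$ and $\mathbb{E}^n_{k,\chi_0}(w_j;Z,s)$, the normalizing factor $\Gamma_n(s+k/2)\Gamma_n(s)^{-1}\xi(2s)\prod_{j=1}^{[n/2]}\xi(4s-2j)$ is common, so Proposition \ref{prop_eigen_Eisenstein_series} yields
\[ \mathbb{E}^{n,(\nu)}_{k,\chi_0}(Z,s) = \g_\nu(\chi_0,s)\sum_{j=\nu}^n b_{\chi_0}(s)_{\nu j}\,\mathbb{E}^{n}_{k,\chi_0}(w_j;Z,s). \]
Writing $D_{\chi_0}(s) = \diag(\g_0(\chi_0,s),\ldots,\g_n(\chi_0,s))$, and letting $\vec{\mathbb{E}}^*(s)$ and $\vec{\mathbb{E}}(s)$ denote the column vectors whose components are the $\mathbb{E}^{n,(\nu)}_{k,\chi_0}(Z,s)$ and the $\mathbb{E}^{n}_{k,\chi_0}(w_\nu;Z,s)$ respectively, this says $\vec{\mathbb{E}}^*(s) = D_{\chi_0}(s) B_{\chi_0}(s) \vec{\mathbb{E}}(s)$.

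Next, Theorem \ref{thm_FE_eigen_function_chi_0} is equivalent to the matrix identity $\vec{\mathbb{E}}^*(s) = J_n \vec{\mathbb{E}}^*(\kappa_n-s)$, where $J_n$ is the anti-diagonal permutation matrix of size $n+1$ with $(J_n)_{\nu,n-\nu}=1$. Substituting the change-of-basis relation on both sides and solving for $\vec{\mathbb{E}}(\kappa_n-s)$ gives
\[ \vec{\mathbb{E}}(\kappa_n-s) = B_{\chi_0}(\kappa_n-s)^{-1}\bigl(D_{\chi_0}(\kappa_n-s)^{-1} J_n D_{\chi_0}(s)\bigr)B_{\chi_0}(s)\vec{\mathbb{E}}(s), \]
where $B_{\chi_0}(\kappa_n-s)$ is invertible because it is strict upper triangular with ones on the diagonal. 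A direct matrix multiplication then shows that the $(n-\nu,\nu)$-entry of $D_{\chi_0}(\kappa_n-s)^{-1} J_n D_{\chi_0}(s)$ is $\g_{n-\nu}(\chi_0,\kappa_n-s)^{-1}\g_\nu(\chi_0,s)$, which is precisely $(T_{\chi_0}(s))_{n-\nu,\nu}$ by definition, while all other entries vanish.

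There is no serious obstacle in this argument; once Theorem \ref{thm_FE_eigen_function_chi_0} is available, the passage to the basis $\{\mathbb{E}^{n}_{k,\chi_0}(w_j;Z,s)\}$ is forced by the triangular structure of $B_{\chi_0}(s)$, and the only thing one genuinely has to verify is the elementary scalar identity $T_{\chi_0}(s) = D_{\chi_0}(\kappa_n-s)^{-1} J_n D_{\chi_0}(s)$.
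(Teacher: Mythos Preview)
Your argument is correct and is precisely the intended derivation: the paper does not give a separate proof of this theorem, since it is obtained from Theorem~\ref{thm_FE_eigen_function_chi_0} by the change-of-basis relation $E^{n,(\nu)}_{k,\chi_0}(Z,s)=\sum_j b_{\chi_0}(s)_{\nu j}E^n_{k,\chi_0}(w_j;Z,s)$ of Proposition~\ref{prop_eigen_Eisenstein_series}, exactly as you write. The identification $T_{\chi_0}(s)=D_{\chi_0}(\kappa_n-s)^{-1}J_nD_{\chi_0}(s)$ is the only thing to check, and you have verified it.
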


In order to prove the Proposition \ref{prop_FE_constant_term_chi_0} we quote two lemmas from \cite{Miz}.

\begin{lemma}[{\cite[Lemma 6.1]{Miz}}] \label{lem_FE_Gamma_function}  For $r \in \Z$ we have
\[ \frac{\Gamma_m(s)}{\Gamma_m(s+r)} = (-1)^{mr} \frac{ \Gamma_m(\kappa_m-r-s)}{\Gamma_m(\kappa_m-s)}. \]

\end{lemma}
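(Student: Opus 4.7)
The plan is to reduce the identity to the classical reflection formula for the Euler $\Gamma$-function. Writing
\[ \Gamma_m(s) = \pi^{m(m-1)/4}\prod_{i=0}^{m-1}\Gamma\!\left(s-\frac{i}{2}\right), \]
the prefactor $\pi^{m(m-1)/4}$ cancels in every ratio of the form $\Gamma_m(\alpha)/\Gamma_m(\beta)$, so both sides of the claimed identity become products of $m$ ratios of ordinary $\Gamma$-factors. The idea is to match those factors via the substitution that sends one side of the identity to the other.

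First I would reindex the product appearing in the right-hand side. Setting $j = m-1-i$ (so $i$ and $j$ both run through $0,\dots,m-1$), a direct computation gives
\[ \kappa_m - s - \frac{i}{2} \;=\; \frac{m+1}{2}-s-\frac{m-1-j}{2} \;=\; 1-\left(s-\frac{j}{2}\right), \]
and similarly $\kappa_m-r-s-\tfrac{i}{2}=1-r-(s-\tfrac{j}{2})$. Hence, writing $u_j = s-j/2$, the right-hand-side ratio becomes
\[ \frac{\Gamma_m(\kappa_m-r-s)}{\Gamma_m(\kappa_m-s)} \;=\; \prod_{j=0}^{m-1}\frac{\Gamma(1-r-u_j)}{\Gamma(1-u_j)}. \]

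Next I would invoke the reflection formula $\Gamma(u)\Gamma(1-u)=\pi/\sin(\pi u)$ twice, once for $u=u_j$ and once for $u=u_j+r$, to obtain
\[ \frac{\Gamma(1-r-u_j)}{\Gamma(1-u_j)} \;=\; \frac{\Gamma(u_j)}{\Gamma(u_j+r)}\cdot\frac{\sin(\pi u_j)}{\sin(\pi(u_j+r))}. \]
Because $r\in\Z$, the sine ratio equals $(-1)^{-r}=(-1)^r$. Taking the product over the $m$ values of $j$ turns the sign into $(-1)^{mr}$ and rebuilds $\Gamma_m(s)/\Gamma_m(s+r)$ on the left, giving
\[ \frac{\Gamma_m(\kappa_m-r-s)}{\Gamma_m(\kappa_m-s)} \;=\; (-1)^{mr}\,\frac{\Gamma_m(s)}{\Gamma_m(s+r)}, \]
which is the claimed identity.

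There is no real obstacle here: the entire proof is symbolic bookkeeping. The only point that needs care is the integrality of $r$, which is precisely what converts $\sin(\pi(u_j+r))/\sin(\pi u_j)$ into the integer-power sign $(-1)^r$; without it the identity would acquire an extra analytic factor. One should also note that the argument makes sense as an identity of meromorphic functions in $s$, avoiding poles by analytic continuation, since both sides are meromorphic and the reflection identities used are valid off a discrete set.
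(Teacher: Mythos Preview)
Your proof is correct. The paper itself does not supply a proof of this lemma; it is quoted verbatim from \cite[Lemma 6.1]{Miz} without argument, so there is nothing in the present paper to compare against. Your route via the reflection formula $\Gamma(u)\Gamma(1-u)=\pi/\sin(\pi u)$ together with the reindexing $j=m-1-i$ (which converts $\kappa_m-s-i/2$ into $1-(s-j/2)$) is the standard way to establish such identities and is almost certainly what Mizumoto does as well. The key observation, which you state explicitly, is that integrality of $r$ collapses each sine ratio to $(-1)^r$, producing the global sign $(-1)^{mr}$.
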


\begin{lemma}[{\cite[Lemma 6.2]{Miz}}] \label{lem_FE_epstein_zeta} For $0 \le \nu \le n$, we have
\[ \zeta^{(n)}_{n-\nu}(Y,s) = \det(Y)^{\nu/2-s} \, \frac{\prod_{j=n-\nu}^{n-1} \, \xi(2s-j)}{\prod_{j=0}^{\nu-1} \, \xi(2s-j)} \zeta_{\nu}^{(n)}\left( Y, \frac{n}{2}-s \right). \] 

\end{lemma}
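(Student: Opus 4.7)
The plan is to deduce the functional equation of $\zeta^{(n)}_r(Y,s)$ from the known level-one functional equation $\mathbb{E}^n_k(Z,s) = \mathbb{E}^n_k(Z,\kappa_n-s)$ (Theorem \ref{thm_FE_Siegel_Eisenstein}) by equating constant Fourier coefficients and then separating the resulting contributions via homogeneity in $Y$.

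First I would compute the constant Fourier coefficient of $E^n_k(Z,s)$ at the unique cusp of level one. The Bruhat-type argument that gives Proposition \ref{prop_Fourier_exp_w_r}, specialized to the trivial character and $\nu=0$, shows that this constant term takes the form $\sum_{r=0}^{n}\mathscr{S}_r(0,2s)\,C_r(Y,k,s)$, with $C_r$ as in (\ref{eq_C(Y,k,s)}) and $\mathscr{S}_r(0,2s)$ the global Siegel series at $N=0$ whose unramified Euler factors are given by Lemma \ref{lem_unramified_S(psi,0)} (with $\psi$ trivial) as explicit ratios of values of $\zeta(2s-j)$. Substituting (\ref{eq_C(Y,k,s)}) puts the constant term into the shape
\[ \det(Y)^{s-k/2}\sum_{r=0}^{n} a_r(k,s)\,\zeta^{(n)}_r\Bigl(Y,\,2s-\tfrac{r+1}{2}\Bigr), \]
where $a_r(k,s)$ is an explicit product of $\Gamma_r$-factors and Riemann zeta values.

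Next I would apply $\mathbb{E}^n_k(Z,s) = \mathbb{E}^n_k(Z,\kappa_n-s)$ and equate the two resulting constant terms. To isolate each $\zeta^{(n)}_r$, I would rescale $Y \mapsto \lambda Y$: the $r$-th summand transforms by $\lambda^{n(s-k/2)-r(2s-(r+1)/2)}$, and these exponents are linear in $s$ with mutually distinct slopes $n-2r$, so the $n+1$ summands are linearly independent as functions of $(Y,s)$. A direct check that the exponents at $(r,s)$ and $(n-r,\kappa_n-s)$ coincide (and no other pairing does) then forces
\[ a_r(k,s)\,\zeta^{(n)}_r\Bigl(Y,\,2s-\tfrac{r+1}{2}\Bigr) = a_{n-r}(k,\kappa_n-s)\,\zeta^{(n)}_{n-r}\Bigl(Y,\,2(\kappa_n-s)-\tfrac{n-r+1}{2}\Bigr). \]

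Finally I would set $\sigma=2s-(r+1)/2$, so that the companion argument on the right becomes $n/2-\sigma$, and simplify the ratio $a_{n-r}(k,\kappa_n-s)/a_r(k,s)$ using Lemma \ref{lem_FE_Gamma_function} for the $\Gamma_r$-factors and the functional equation $\xi(s)=\xi(1-s)$ for the Riemann factors. After cancellation this ratio collapses to the asserted quotient of completed $\xi$-factors multiplied by $\det(Y)^{\nu/2-\sigma}$ with $\nu=n-r$, which is exactly the lemma after renaming $\sigma\mapsto s$ and $r\mapsto n-\nu$. I expect the main obstacle to be the first step: one has to carry the Fourier-coefficient bookkeeping out carefully enough to recognize $a_r(k,s)$ as a clean $\Gamma_n$ and $\xi$ product, with the correct split between factors that are invariant under $s\mapsto\kappa_n-s$ and factors that reverse under it, so that the final cancellation is transparent. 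Once that is in place the remaining identity is routine symbol-pushing with the two standard functional equations and Lemma \ref{lem_FE_Gamma_function}.
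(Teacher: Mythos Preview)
The paper does not prove this lemma at all: it is simply quoted from \cite{Miz} and then used as a black box in the proof of Proposition~\ref{prop_FE_constant_term_chi_0}. So there is no ``paper's own proof'' to compare against; what you have written is an independent argument.

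Your argument is essentially sound, but you should be aware that it runs the logic of \cite{Miz} backwards. In Mizumoto's paper the functional equation for $\zeta^{(n)}_r(Y,s)$ is established first (by a direct Epstein--Poisson type argument), and Theorem~\ref{thm_FE_Siegel_Eisenstein} is then deduced from it by exactly the constant-term comparison you describe. Your route therefore becomes circular if you appeal to Mizumoto's proof of Theorem~\ref{thm_FE_Siegel_Eisenstein}; to make it honest you must invoke the Kalinin/Langlands proof instead, which does not pass through the Epstein zeta. Once that is granted, the homogeneity separation works as you say (the exponents $n(s-k/2)-r(2s-(r+1)/2)$ have pairwise distinct $s$-slopes $n-2r$, and one checks that the exponent at $(r,s)$ equals that at $(n-r,\kappa_n-s)$ and no other pairing matches), and the simplification of $a_{n-r}(k,\kappa_n-s)/a_r(k,s)$ is literally the computation carried out in the proof of Proposition~\ref{prop_FE_constant_term_chi_0}, read in reverse. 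In particular the $k$-dependence must and does cancel via Lemma~\ref{lem_FE_Gamma_function}, leaving a $k$-free ratio of $\xi$-values; this is worth saying explicitly, since otherwise one has only shown the lemma up to a factor that might a priori depend on the auxiliary weight.
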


\begin{proof}[Proof of Proposition \ref{prop_FE_constant_term_chi_0}] We put $\la= n-\nu$. 
Multiplying suitable power of $\pi$ and certain Gamma functions in order to change zeta functions into completed zeta functions, we have
\begin{equation}  \Lambda^{n,(\nu)}_{k,\chi_0}(s)  = (\sqrt{-1})^{-\nu k} 2^{\nu(\nu+3)/2-2 \nu s} \pi^{\nu(\nu+1)/4} G_\nu(s) Z_\nu(s) \label{eq_Lambda_Z(s)_G(s)}
\end{equation}
with
\[ G_\nu(s) = \dfrac{\Gamma_\nu \left( 2s-\dfrac{\nu+1}{2} \right)\Gamma_n\left(s + \dfrac{k}{2} \right)}{\Gamma_\nu \left(s+\dfrac{k}{2} \right) \Gamma_\nu\left(s-\dfrac{k}{2}\right)\Gamma_n(s)} \cdot \dfrac{\Gamma(s)}{\Gamma\left(s-\dfrac{\nu}{2} \right)}  \prod_{i=1}^{[\nu/2]} \frac{\Gamma(2s-i)}{\Gamma \left(2s+i- \dfrac{2\nu+1}{2} \right)} \]
and
\begin{align*} Z_\nu(s) & = \xi(2s-\nu) \prod_{j=1}^{[\nu/2]}\xi(4s+2j-2\nu-1) \prod_{j=[\nu/2]+1}^{[n/2]} \xi(4s-2j) \\
& \quad \times  (\det Y)^{s-k/2} \zeta^{(n)}_\nu \left(Y, 2s- \frac{\nu+1}{2} \right).
\end{align*}

First we treat $Z_\nu(s)$. We have
\[ \prod_{j=1}^{[\nu/2]}\xi(4s+2j-2\nu-1) = \prod_{\substack{j=2 \\ \text{even}}}^{\nu} \xi(4s+j-2\nu-1) = \prod_{\substack{j=0 \\ j \equiv \nu \, (2)}}^{\nu-1} \xi(4s-\nu-1-j), \]
the latter is obtained by the translation $j \mapsto \nu-j$. Further
\[ \prod_{j=[\nu/2]+1}^{[n/2]} \xi(4s-2j) =  \prod_{\substack{j=\nu+1 \\ \text{even}}}^n \xi(4s-j) = \prod_{\substack{j=0 \\ j \not\equiv \nu \, (2)}}^{\la-1} \xi(4s-\nu-1-j), \]
thus we can rewrite $Z_\nu(s)$ as follows. If $\nu \le \la$ then
\begin{align}  Z_\nu(s) & =  \xi(2s-\nu) \prod_{j=0}^{\nu-1} \xi(4s-\nu-1-j) \prod_{\substack{j=\nu \\ j \not\equiv \nu \, (2)}}^{\la-1} \xi(4s-\nu-1-j) \notag \\
& \quad \times (\det Y)^{s-k/2} \zeta^{(n)}_\nu \left(Y, 2s- \frac{\nu+1}{2} \right), \label{eq_Z_nu(s)_nu_le_la}
\end{align}
and if $\nu \ge \la$ then
\begin{align}  Z_\nu(s) & =  \xi(2s-\nu) \prod_{j=0}^{\la-1} \xi(4s-\nu-1-j) \prod_{\substack{j=\la \\ j \equiv \nu \, (2)}}^{\nu-1} \xi(4s-\nu-1-j) \notag \\& \quad \times (\det Y)^{s-k/2} \zeta^{(n)}_\nu \left(Y, 2s- \frac{\nu+1}{2} \right). \label{eq_Z_nu(s)_nu_ge_la}
\end{align}
By Lemma \ref{lem_FE_epstein_zeta} we have
\begin{align*} & (\det Y)^{s-k/2} \zeta^{(n)}_\nu \left( Y, 2s-\frac{\nu+1}{2} \right) \\
& = \frac{\prod_{j=\nu}^{n-1} \xi(4s-\nu-1-j)}{\prod_{j=0}^{\la-1}\xi(4s-\nu-1-j)} (\det Y)^{\kappa_n-s-k/2} \zeta_{\la}^{(n)} \left(Y, 2(\kappa_n-s)- \frac{\la+1}{2} \right).
\end{align*}
In the case  of $\nu \le \la$,  decompose $\prod_{j=0}^{\la-1} = \prod_{j=0}^{\nu-1} \cdot \prod_{j=\nu}^{\la-1}$ then we have
\begin{align*}
Z_\nu(s) & = \xi(2s-\nu) \prod_{j=\la}^{n-1} \xi(4s-\nu-1-j) \prod_{\substack{j=\nu \\ j \not\equiv \nu \, (2)}}^{\la-1} \xi(4s-\nu-1-j) \\
& \quad \times (\det Y)^{\kappa_n-s-k/2} \zeta_{\la}^{(n)} \left(Y, 2(\kappa_n-s)- \frac{\la+1}{2} \right).
\end{align*}
We have $\xi(2s-\nu) = \xi \bigl( 2(\kappa_n-s) - \la \bigr)$ and 
\begin{align*}
\prod_{j=\la}^{n-1} \xi(4s-\nu-1-j) & = \prod_{j=\la}^{n-1} \xi(2-4s + \nu + j) \\
& = \prod_{j=\la}^{n-1} \xi \bigl( 4(\kappa_n-s) - \la  -n+ j \bigr)  \\
& = \prod_{j=0}^{\nu-1} \xi \bigl( 4(\kappa_n-s)-\la-1-j \bigr),
\end{align*}
the final equality is obtained by $j \mapsto n-1-j$. Similarly we have
\[ \prod_{\substack{j=\nu \\ j \not\equiv \nu \, (2)}}^{\la-1} \xi(4s-\nu-1-j) = \prod_{\substack{j=\nu \\ j \equiv \la \, (2)}}^{\la-1} \xi \bigl( 4(\kappa_n-s)-\la-1-j), \]
therefore by comparing (\ref{eq_Z_nu(s)_nu_ge_la}) we have $Z_\nu(s) = Z_\la(\kappa_n-s)$. Also in the case of $\nu \ge \la$, one can show that  $Z_\nu(s) = Z_\la(\kappa_n-s)$ holds similarly.

Next we consider $G_\nu(s)$. For an integer $r$, we denote $\widetilde{r} = r(r-1)/4$ for simplicity.

If $\nu$ is even then by $i \mapsto \nu/2-i$ we have
\[ \prod_{i=1}^{[\nu/2]}\Gamma \left( 2s+i-\frac{2 \nu+1}{2} \right) = \prod_{i=0}^{\nu/2-1} \Gamma \left( 2s-\frac{\nu+1}{2}-i \right), \]
thus 
\begin{align*} & \quad  \  \Gamma_\nu \left(2s-\frac{\nu+1}{2} \right) \prod_{i=1}^{[\nu/2]} \frac{\Gamma(2s-i)}{\Gamma \left(2s+i -\dfrac{2 \nu+1}{2} \right)} \\
& =  \pi^{\widetilde{\nu}} \prod_{\substack{i=0 \\ \text{odd}}}^{\nu-1} \Gamma \left(2s-\frac{\nu+1}{2}-\frac{i}{2} \right) \prod_{i=1}^{\nu/2} \Gamma(2s-i)  = \pi^{\widetilde{\nu}} \prod_{i=1}^\nu \Gamma(2s-i). 
\end{align*}
By a similar calculation the above identity holds also in the case of $\nu$ is odd.
thus we have
\[ G_\nu(s) = \pi^{\widetilde{\nu}}  \frac{\displaystyle \Gamma_n \left( s + \dfrac{k}{2} \right) \, \prod_{i=1}^{\nu} \Gamma(2s-i)}{\Gamma_\nu \left( s + \dfrac{k}{2} \right)  \Gamma_\nu\left(s-\dfrac{k}{2} \right) \Gamma_n(s)} \cdot \frac{\Gamma(s)}{\Gamma \left(s- \dfrac{\nu}{2} \right)}.  \]
By Legendre's duplication formula we have
\[ \prod_{i=1}^{\nu} \Gamma(2s-i) = \pi^{-\nu/2} 2^{2s\nu-\nu(\nu+3)/2} \prod_{i=1}^\nu \left\{ \Gamma\Bigl( s- \frac{i}{2} \Bigr) \Gamma \Bigl(s-\frac{i}{2}+\frac{1}{2} \Bigr) \right\}, \]
therefore
\[ G_\nu(s) = \pi^{-\nu(\nu+1)/4} 2^{2s \nu - \nu(\nu+3)/2} \frac{ \Gamma_n\left( s+ \dfrac{k}{2} \right)  \Bigl( \Gamma_\nu(s) \Bigr)^2}{ \Gamma_\nu \left(s+\dfrac{k}{2} \right)\Gamma_\nu \left(s-\dfrac{k}{2} \right)\Gamma_n(s)}. \]
We have
\begin{align*} 
& \frac{\Gamma_n\left(s + \dfrac{k}{2} \right)}{\Gamma_\nu \left(s + \dfrac{k}{2} \right)}  = \pi^{\widetilde{n}-\widetilde{\nu}} \prod_{i=\nu}^{n-1} \Gamma \left( s + \frac{k}{2}-\frac{i}{2} \right) = \pi^{\widetilde{n}-\widetilde{\nu}-\widetilde{\la}} \, \Gamma_\la \left(s -\frac{\nu}{2} + \frac{k}{2}\right), \\
& \frac{\Gamma_\nu(s)}{\Gamma_n(s)}  = \pi^{-\widetilde{n}+ \widetilde{\nu}+\widetilde{\la}} \frac{1}{ \Gamma_\la \left(s- \dfrac{\nu}{2} \right)}. 
\end{align*}
As a consequence we have the following. 
Let 
\[G^*_\nu(s) = (\sqrt{-1})^{-\nu k} 2^{\nu(\nu+3)/2-2\nu s}\pi^{\nu(\nu+1)/4} G_\nu(s),\]
 then by (\ref{eq_Lambda_Z(s)_G(s)}) we have $\Lambda_{k,\chi_0}^{n,(\nu)}(s) = G^*_\nu(s) Z_\nu(s)$ and
\[ G^*_\nu(s) = (-1)^{-\nu k/2} \frac{ \Gamma_\nu(s)}{\Gamma_\nu \Bigl( s- \dfrac{k}{2} \Bigr)} \,  \frac{\Gamma_\la \Bigl( s - \dfrac{\nu}{2} +\dfrac{k}{2} \Bigr)}{ \Gamma_\la \Bigl(s- \dfrac{\nu}{2} \Bigr)}.  \]
We remark that $k$ is even since $\chi_0(-1) = 1$. By Lemma \ref{lem_FE_Gamma_function} we have
\begin{align*} \frac{ \Gamma_\nu(s)}{\Gamma_\nu \Bigl( s- \dfrac{k}{2} \Bigr)}   & = (-1)^{\nu k/2} \frac{\Gamma_\nu\Bigl(\kappa_n-s - \dfrac{\la}{2} + \dfrac{k}{2} \Bigr)}{\Gamma_\nu\Bigl( \kappa_n-s- \dfrac{\la}{2} \Bigr)}, \\
\frac{\Gamma_\la \Bigl( s - \dfrac{\nu}{2} +\dfrac{k}{2} \Bigr)}{ \Gamma_\la \Bigl(s- \dfrac{\nu}{2} \Bigr)} & = (-1)^{\la k/2} \frac{\Gamma_\la ( \kappa_n-s) }{\Gamma_\la \Bigl(\kappa_n-s - \dfrac{k}{2} \Bigr)},
\end{align*}
which shows that $G^*_\nu(s) = G^*_\la(\kappa_n-s)$. This completes the proof.
\end{proof}

\subsection{The case of the quadratic character}

Finally we consider the case of $\psi = \chi_p$. In this case we cannot induce the functional equations of $E^{n}_{k,\chi_p}(w_\nu;Z,s)$ from Theorem \ref{thm_FE_Siegel_Eisenstein}. 
However we also have the following. 
\begin{lemma} \label{lem_existence_FE_chi_p} There exists a functional equation of the form \[E^{n,(\nu)}_{k,\chi_p}(Z,s) = C(s) E^{n,(n-\nu)}_{k,\chi_p}(Z,\kappa_n-s).\]

\end{lemma}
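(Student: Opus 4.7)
The plan is to deduce the functional equation from Langlands' general theory of Eisenstein series in the adelic framework of Section \ref{subsection_Siegel_Eisen_G(A)}, combined with the $U(p)$-eigenvalue symmetry $l(s,\nu) = l(\kappa_n-s, n-\nu)$ recorded in Corollary \ref{cor_eigenvalues_U(p)}.

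First I would invoke Langlands' functional equation for the adelic Eisenstein series $E(\bff_\phi,g,2s)$ associated with $\phi \in I_p(2s,\w)_{K_p}$, which gives an identity of the form
\[ E(\bff_\phi, g, 2s) = E(\bff_{M(s)\phi}, g, n+1-2s), \]
where $M(s) \colon I(2s,\w) \to I(n+1-2s, \w^{-1})$ is the global intertwining operator attached to the longest Weyl element modulo the Siegel parabolic $P_n$. Since $\chi_p$ is quadratic we have $\w^{-1} = \w$, and $M(s)$ decomposes as a restricted tensor product $\prod_v M_v(s)$ of local intertwining operators. At every place $v \neq p$ the space $I_v(2s,\w)_{K_v}$ is one-dimensional, so each $M_v(s)$ acts on the distinguished section $f_v^{(\nu)}$ by a scalar $c_v(s)$---an archimedean gamma factor at $v=\infty$ and an unramified Gindikin--Karpelevich quotient at finite $v \ne p$. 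The problem thus reduces to analyzing the local intertwiner $M_p(s) \colon I_p(2s,\w)_{K_p} \to I_p(n+1-2s,\w)_{K_p}$.

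The key local step is that $M_p(s)$, being a $G(\Q_p)$-equivariant map, commutes with the $U(p)$-operator defined in (\ref{eq_def_U(p)_adele}), since the latter is given by integration of right translations against a $(K_p, \w_p)$-equivariant kernel supported on $H_p$. By Corollary \ref{cor_eigenvalues_U(p)}, $U(p)$ acts with simple spectrum on both source and target spaces, with eigenvalue sets $\{p^{l(s,j)}\}_{0 \le j \le n}$ and $\{p^{l(\kappa_n-s,j)}\}_{0 \le j \le n}$ respectively. The identity $l(s,\nu) = l(\kappa_n-s, n-\nu)$ then forces $M_p(s)$ to send the $\nu$-th normalized $U(p)$-eigenvector (corresponding to $E^{n,(\nu)}_{k,\chi_p}(Z,s)$ under the isomorphism (\ref{isom_degenerate_principal_eisen})) to a scalar multiple $c_p(s)$ of the $(n-\nu)$-th normalized eigenvector (corresponding to $E^{n,(n-\nu)}_{k,\chi_p}(Z, \kappa_n - s)$). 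Transporting this identity back to the classical side yields the desired functional equation with $C(s) = c_\infty(s)\, c_p(s) \prod_{q \ne p} c_q(s)$.

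The main obstacle I expect is to make the eigenvector-matching argument rigorous, since simplicity of the $U(p)$-spectrum can fail on the discrete set $\{s : 2s = (j+j'+1)/2 \text{ for some } j \neq j'\}$. I would handle this by working at generic $s$ in the region of absolute convergence and extending the equation by meromorphic continuation. The commutation $M_p(s) \circ U(p) = U(p) \circ M_p(s)$ on the $(K_p,\w_p)$-equivariant subspaces also deserves careful justification, but follows from the $G(\Q_p)$-equivariance of $M_p(s)$ together with the fact that $U(p)$ is given by an integral operator against a function that is $(K_p, \w_p)$-equivariant on both sides.
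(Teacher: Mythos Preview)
Your proposal is correct and follows essentially the same approach as the paper: invoke Langlands' functional equation $E(f,g,s) = E(M(s)f,g,\kappa_n-s)$ via the global intertwining operator, observe that $M(s)$ commutes with the $U(p)$-operator (since $U(p)$ is built from right translations and $M(s)$ intertwines the right regular actions), and then use the eigenvalue symmetry $l(s,\nu) = l(\kappa_n-s,n-\nu)$ together with simplicity of the $U(p)$-spectrum to match eigenvectors. Your local decomposition of $M(s)$ and your remarks on the generic-$s$ simplicity argument are more detailed than the paper's proof, but the underlying strategy is identical.
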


\begin{proof} The desired assertion follows from the general theorem due to Langlands (\cite{La}). Using the notation in subsection \ref{subsection_Siegel_Eisen_G(A)}, put 
\[ I^{(\nu)}(s,\w)_K = \bigotimes_v I^{(\nu)}_v(s,\w)_{K_v} \]
for $\nu=0$ or $1$, and $I(s,\w)_K = I^{(0)}(s,\w)_K \oplus I^{(1)}(s,\w)_K$. We define the intertwining operator $M(s) \colon I(s,\w)_K \to I(n+1-s,\w)_K$ by
\[   M(s)f(g) = \int_{\Sym^n(\A)} f \left( w_n \begin{pmatrix} 1 & X \\ 0 & 1 \end{pmatrix} g \right) \, dX. \]
For $f \in I(s,\w)_K$, the Eisenstein series $E(f,g,s)$ is defined by
\[ E(f,g,s) = \sum_{\g \in P_n(\Q) \backslash G(\Q)} f(\g g). \]
Then Langlands showed that the  functional equation 
\[ E(f, g,s) = E(M(s)f, g, \kappa_n-s) \]
holds (cf.\ \cite[Part I, Main Theorem (a), (ii)]{Ar}, \cite[VI.2.1. Theorem (1)]{MW}).  We denote by $\varphi(\nu,2s)$ the $U(p)$-eigenfunction in $I_p(2s,\w)_{K_p}$ with the eigenvalue $p^{l(s,\nu)}$.  Since the intertwining operator $M(s)$ is compatible with $U(p)$-operator given in (\ref{eq_def_U(p)_adele_another}), we have $M(2s) \mathbf{f}_{\varphi(\nu,2s)} = C(s) \mathbf{f}_{\varphi(n-\nu, n+1-2s)}$ for a certain function $C(s)$, for $l(s,\nu) = l(\kappa_n-s, n-\nu)$. This proves our assertion.
 \end{proof}

We define the completed Dirichlet $L$-function as follows. For a Dirichlet character $\psi$ we put $\delta_\psi = (1 - \psi(-1))/2$. For simplicity we denote $\delta_p = \delta_{\chi_p}$, then $\delta_p=0$ or $1$ according as $p \equiv 1 $ or $p \equiv 3 \bmod 4$, equivalently $k$ is even or odd, for $\chi_p(-1)=(-1)^k$. The completed Dirichlet $L$-function is now defined by
\[ \xi(\chi_p,s) = \left( \frac{p}{\pi} \right)^{(s+ \delta_p)/2} \Gamma \left( \frac{s+ \delta_p}{2} \right) L(\chi_p,s), \]
that satisfies $\xi(\chi_p,s) = \xi(\chi_p,1-s)$.
We put $\ve_p = 1$ or $\sqrt{-1}$ according as $p \equiv 1$ or $p \equiv 3 \bmod 4$. For
\[ \g_\nu(\chi_p,s) =  (\ve_p p^{-1/2})^\nu  \prod_{i=1}^{[\nu/2]} \frac{1-p^{2i-4s}}{1-p^{2\nu+1 -2i-4s}},
\]
we define
\begin{align*} 
& \mathbb{E}_{k,\chi_p}^{n,(\nu)}(Z,s)  = \g_\nu(\chi_p,s) \dfrac{\Gamma_n\left(s + \dfrac{k}{2} \right)}{\Gamma_n \left( s + \dfrac{\delta_p}{2} \right)} \, \xi(\chi_p,2s) \prod_{j=1}^{[n/2]} \xi(4s-2j) \,  E_{k,\chi_0}^{n,(\nu)}(Z,s).
\end{align*}
Then the constant term of the Fourier expansion of $\mathbb{E}_{k,\chi_p}^{n,(\nu)}(Z,s)$ is given by
\begin{align*}  & \Lambda_{k,\chi_p}^{n,(\nu)}(Y,s)  =  (\sqrt{-1})^{-\nu k} 2^{\nu(\nu+3)/2-2\nu s} \pi^{\nu(\nu+1)/2}  \\
&  \quad \times (\ve_p p^{-1/2})^\nu  \dfrac{\Gamma_\nu \left( 2s-\dfrac{\nu+1}{2} \right)\Gamma_n\left(s + \dfrac{k}{2} \right)}{\Gamma_\nu \left(s+\dfrac{k}{2} \right) \Gamma_\nu\left(s-\dfrac{k}{2}\right)\Gamma_n\left( s + \dfrac{a}{2} \right)}  \\
&  \quad \times \dfrac{L(\chi_p, 2s-\nu)}{L(\chi_p, 2s)} \prod_{i=1}^{[\nu/2]} \dfrac{\zeta (4s+2i-2\nu-1)}{\zeta (4s-2i)} \xi(\chi_p, 2s) \prod_{j=1}^{[n/2]} \xi(4s-2j) \\
& \quad \times (\det Y)^{s-k/2} \zeta_\nu^{(n)}\left(Y,2s-\dfrac{\nu+1}{2} \right).
\end{align*}

The following similar results as the case of trivial character holds.
%

\begin{theorem} \label{thm_FE_eigen_function_chi_p}
We have a functional equation
\[ \mathbb{E}^{n,(\nu)}_{k,\chi_p}(Z,s) = \mathbb{E}^{n,(n-\nu)}_{k,\chi_p}(Z, \kappa_n-s). \]
\end{theorem}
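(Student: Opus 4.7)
The plan is to imitate the proof of Theorem \ref{thm_FE_eigen_function_chi_0}. By Lemma \ref{lem_existence_FE_chi_p} we already know that a functional equation
\[ \mathbb{E}^{n,(\nu)}_{k,\chi_p}(Z,s) = C(s)\, \mathbb{E}^{n,(n-\nu)}_{k,\chi_p}(Z,\kappa_n-s) \]
holds for some meromorphic function $C(s)$ independent of $Z$; hence it suffices to pin down $C(s) \equiv 1$. Since the two sides are nonzero meromorphic functions of $(Z,s)$ that have Fourier expansions in $X$, we can compare them by comparing constant terms, which reduces the whole theorem to the identity
\[ \Lambda^{n,(\nu)}_{k,\chi_p}(Y,s) = \Lambda^{n,(n-\nu)}_{k,\chi_p}(Y,\kappa_n-s). \]

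To verify this, I would factor $\Lambda^{n,(\nu)}_{k,\chi_p}(Y,s) = G^*_\nu(s) \cdot Z_\nu(s)$ in direct analogy with the decomposition used in the proof of Proposition \ref{prop_FE_constant_term_chi_0}. The zeta-type piece $Z_\nu(s)$ should be built from the Epstein factor $(\det Y)^{s-k/2}\zeta^{(n)}_\nu\bigl(Y,2s-(\nu+1)/2\bigr)$ together with the completed $L$-function $\xi(\chi_p,2s-\nu)$ and the $\xi(4s-2j)$ products. The passage from the raw formula to this form uses the trivial identity
\[ \xi(\chi_p,2s)\,\frac{L(\chi_p,2s-\nu)}{L(\chi_p,2s)} = \left(\frac{p}{\pi}\right)^{\nu/2}\frac{\Gamma\!\left(\dfrac{2s+\delta_p}{2}\right)}{\Gamma\!\left(\dfrac{2s-\nu+\delta_p}{2}\right)}\,\xi(\chi_p,2s-\nu), \]
converting the $L$-ratio into a genuine completed $L$-value at the price of one extra gamma ratio, which I absorb into $G^*_\nu(s)$. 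The symmetry $Z_\nu(s)=Z_\la(\kappa_n-s)$, with $\la=n-\nu$, should then follow exactly as for $\chi_0$: $\xi(\chi_p,2s-\nu)$ is automatically invariant under $(s,\nu)\mapsto(\kappa_n-s,n-\nu)$ by $\xi(\chi_p,w)=\xi(\chi_p,1-w)$, the $\xi(4s-\nu-1-j)$ products are rearranged by the involutions $j\mapsto n-1-j$ and $j\mapsto \nu-j$, and Lemma \ref{lem_FE_epstein_zeta} supplies the needed identity for the Epstein zeta factor.

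The bulk of the real work is the gamma piece $G^*_\nu(s)$. It collects $(\sqrt{-1})^{-\nu k}$, $2^{\nu(\nu+3)/2-2\nu s}$, $\pi$-powers, the normalizer $(\ve_p p^{-1/2})^\nu$, the factor $(p/\pi)^{\nu/2}$ inherited from the step above, and the gamma quotient involving $\Gamma_n(s+k/2)/\Gamma_n(s+\delta_p/2)$, $\Gamma_\nu(2s-(\nu+1)/2)/(\Gamma_\nu(s\pm k/2))$ and $\Gamma((2s+\delta_p)/2)/\Gamma((2s-\nu+\delta_p)/2)$. The $p$-powers cancel in the combination $(p^{-1/2})^\nu(p/\pi)^{\nu/2}=\pi^{-\nu/2}$, precisely explaining the normalization $\ve_p^\nu p^{-\nu/2}$ in $\g_\nu(\chi_p,s)$. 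Applying Legendre's duplication formula to $\Gamma_\nu(2s-(\nu+1)/2)$ exactly as in the proof of Proposition \ref{prop_FE_constant_term_chi_0}, and regrouping the $\Gamma_n/\Gamma_\nu$ ratios into $\Gamma_\la$ factors, reduces the desired symmetry $G^*_\nu(s)=G^*_\la(\kappa_n-s)$ to a finite list of applications of Lemma \ref{lem_FE_Gamma_function}, but now with shift parameter $k$ replaced by a mix of $k$ and $\delta_p$.

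The main obstacle will be the careful bookkeeping of parities in this last step, because for $p\equiv 3\pmod 4$ one has $\delta_p=1$ and $k$ odd, so the sign $(-1)^{mr}$ produced by Lemma \ref{lem_FE_Gamma_function} depends nontrivially on $\nu$, $\la$, $k$ and $\delta_p$, and it must cancel exactly against the factor $\ve_p^{\nu-\la}$ coming from $\g_\nu(\chi_p,s)\g_\la(\chi_p,\kappa_n-s)^{-1}$. Verifying that $\ve_p^{\nu-\la}$ with $\ve_p=\sqrt{-1}$ reproduces this sign (and is $+1$ throughout when $p\equiv 1\pmod 4$) is the only genuinely new calculation beyond the $\chi_0$ case, and it is precisely what forces the specific form of $\g_\nu(\chi_p,s)$ chosen in the statement of the theorem.
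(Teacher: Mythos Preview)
Your plan is correct and coincides with the paper's own proof: reduce to the constant term via Lemma~\ref{lem_existence_FE_chi_p}, factor $\Lambda^{n,(\nu)}_{k,\chi_p}(Y,s)=G^*_\nu(s)Z_\nu(s)$ exactly as in Proposition~\ref{prop_FE_constant_term_chi_0}, verify $Z_\nu(s)=Z_{n-\nu}(\kappa_n-s)$ by the same $\xi$-manipulations plus Lemma~\ref{lem_FE_epstein_zeta}, and check $G^*_\nu(s)=G^*_{n-\nu}(\kappa_n-s)$ via Legendre duplication and Lemma~\ref{lem_FE_Gamma_function}. The paper streamlines your final sign discussion by absorbing $\ve_p^\nu$ into the prefactor $(\sqrt{-1})^{-\nu k}$ to produce $(\sqrt{-1})^{-\nu(k-\delta_p)}$ and then observing that $(k-\delta_p)/2\in\Z$ (since $\chi_p(-1)=(-1)^k$), so Lemma~\ref{lem_FE_Gamma_function} applies with integer shift and the signs match automatically; this is exactly the parity check you anticipate.
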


 We put
\[\mathbb{E}_{k,\chi_p}^{n,}(w_\nu;Z,s)  = \dfrac{\Gamma_n\left(s + \dfrac{k}{2} \right)}{\Gamma_n \left(s + \dfrac{\delta_p}{2} \right)} \, \xi(\chi_p,2s) \prod_{j=1}^{[n/2]} \xi(4s-2j) \,  E_{k,\chi_p}^n(w_\nu;Z,s). \]

\begin{theorem}  \label{thm_FE_chi_p_standard_basis}
Let $T_{\chi_p}(s)$ be the anti-diagonal matrix of size $n+1$, whose \\ $(n-\nu,\nu)$-component is $\g_\nu(\chi_p,s) \g_{n-\nu}(\chi_p, \kappa_n-s)^{-1}$ for $0 \le \nu \le n$. We have
\begin{align*}  \begin{pmatrix} \mathbb{E}^{n}_{k,\chi_p} (w_0;Z,\kappa_n-s ) \\ \vdots \\ \mathbb{E}^{n}_{k,\chi_p}(w_n;Z,\kappa_n-s ) \end{pmatrix} = B_{\chi_p}(\kappa_n-s)^{-1} T_{\chi_p}(s) B_{\chi_p}(s) \begin{pmatrix} \mathbb{E}^{n}_{k,\chi_p}(w_0;Z,s) \\ \vdots \\ \mathbb{E}^{n}_{k,\chi_p}(w_n;Z,s) \end{pmatrix}.  
\end{align*}

\end{theorem}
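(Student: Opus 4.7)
The plan is to reduce this theorem to a linear-algebra manipulation on top of Theorem \ref{thm_FE_eigen_function_chi_p}. Introduce the column vectors $\mathbb{E}^{\rm std}(s) = {}^{t}(\mathbb{E}^n_{k,\chi_p}(w_0;Z,s),\dots,\mathbb{E}^n_{k,\chi_p}(w_n;Z,s))$ and $\mathbb{E}^{\rm eig}(s) = {}^{t}(\mathbb{E}^{n,(0)}_{k,\chi_p}(Z,s),\dots,\mathbb{E}^{n,(n)}_{k,\chi_p}(Z,s))$, and the diagonal matrix $\Gamma_{\chi_p}(s) = \diag(\g_0(\chi_p,s),\dots,\g_n(\chi_p,s))$. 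By Proposition \ref{prop_eigen_Eisenstein_series} the unnormalized eigenfunctions satisfy $E^{n,(i)}_{k,\chi_p}(Z,s) = \sum_{j=i}^{n} b_{\chi_p}(s)_{ij} E^n_{k,\chi_p}(w_j;Z,s)$, and since the Gamma/$\xi$ factor common to both normalizations cancels in the ratio, one obtains
\[ \mathbb{E}^{\rm eig}(s) = \Gamma_{\chi_p}(s)\, B_{\chi_p}(s)\, \mathbb{E}^{\rm std}(s). \]

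Next, Theorem \ref{thm_FE_eigen_function_chi_p} states $\mathbb{E}^{n,(\nu)}_{k,\chi_p}(Z,s) = \mathbb{E}^{n,(n-\nu)}_{k,\chi_p}(Z,\kappa_n-s)$, which in vector form is $\mathbb{E}^{\rm eig}(s) = J\,\mathbb{E}^{\rm eig}(\kappa_n-s)$, where $J$ is the $(n+1)\times(n+1)$ anti-diagonal permutation matrix with $J_{i,n-i}=1$. Combining with the previous identity applied at both $s$ and $\kappa_n-s$,
\[ \Gamma_{\chi_p}(s)\,B_{\chi_p}(s)\,\mathbb{E}^{\rm std}(s) = J\,\Gamma_{\chi_p}(\kappa_n-s)\,B_{\chi_p}(\kappa_n-s)\,\mathbb{E}^{\rm std}(\kappa_n-s), \]
and solving for $\mathbb{E}^{\rm std}(\kappa_n-s)$ gives
\[ \mathbb{E}^{\rm std}(\kappa_n-s) = B_{\chi_p}(\kappa_n-s)^{-1}\bigl[\Gamma_{\chi_p}(\kappa_n-s)^{-1} J\, \Gamma_{\chi_p}(s)\bigr] B_{\chi_p}(s)\, \mathbb{E}^{\rm std}(s). \]

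The last thing to check is that the bracketed factor is exactly $T_{\chi_p}(s)$. This is a direct computation: $\Gamma_{\chi_p}(\kappa_n-s)^{-1}J\Gamma_{\chi_p}(s)$ is anti-diagonal, and setting $i=n-\nu$ its $(n-\nu,\nu)$-entry equals $\g_{n-\nu}(\chi_p,\kappa_n-s)^{-1}\,\g_\nu(\chi_p,s)$, matching the definition of $T_{\chi_p}(s)$ given right before Theorem \ref{thm_FE_chi_0_standard_basis}. This yields the claimed matrix identity.

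There is no genuine obstacle here: once Theorem \ref{thm_FE_eigen_function_chi_p} and Proposition \ref{prop_eigen_Eisenstein_series} are in hand, the argument is entirely formal. The only thing that needs care is bookkeeping — verifying that the normalization factor $\Gamma_n(s+k/2)/\Gamma_n(s+\delta_p/2)\cdot\xi(\chi_p,2s)\prod_{j}\xi(4s-2j)$ that is common to $\mathbb{E}^{n,(\nu)}_{k,\chi_p}$ and $\mathbb{E}^{n}_{k,\chi_p}(w_\nu;\,\cdot)$ cancels cleanly so that only the diagonal twist $\Gamma_{\chi_p}(s)$ remains, and that the anti-diagonal conjugation of a diagonal matrix indeed reverses its entries; both are immediate from the definitions.
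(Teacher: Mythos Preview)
Your argument is correct and is exactly the (implicit) approach of the paper: the paper does not give a separate proof of Theorem \ref{thm_FE_chi_p_standard_basis}, treating it as an immediate linear-algebra consequence of Theorem \ref{thm_FE_eigen_function_chi_p} together with the change-of-basis relation of Proposition \ref{prop_eigen_Eisenstein_series}. One tiny slip: the definition of $T_{\chi_p}(s)$ you invoke is stated in Theorem \ref{thm_FE_chi_p_standard_basis} itself, not before Theorem \ref{thm_FE_chi_0_standard_basis} (that was $T_{\chi_0}(s)$), but this does not affect the mathematics.
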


\begin{proof}[{Proof of Theorem \ref{thm_FE_eigen_function_chi_p}}] 
By Lemma \ref{lem_existence_FE_chi_p} it suffices to show the functional equation of the constant term; $\Lambda_{k,\chi_p}^{n,(\nu)}(s) = \Lambda_{k,\chi_p}^{n,(n-\nu)}(\kappa_n-s)$. 
The proof  is quite similar to the proof of Proposition \ref{prop_FE_constant_term_chi_0}. We only consider the case $p \equiv 3 \bmod 4$, equivalently to the case that $k$ is odd.
By multiplying suitable power of $\pi$ and Gamma functions we have
\begin{equation*}  \Lambda^{n,(\nu)}_{k,\chi_0}(s)  = (\sqrt{-1})^{-\nu(k-1)} 2^{\nu(\nu+3)/2-2 \nu s} \pi^{\nu(\nu+1)/4} G_\nu(s) Z_\nu(s) 
\end{equation*}
with
\begin{align*} G_\nu(s) & = \dfrac{\Gamma_\nu \left( 2s-\dfrac{\nu+1}{2} \right)\Gamma_n\left(s + \dfrac{k}{2} \right)}{\Gamma_\nu \left(s+\dfrac{k}{2} \right) \Gamma_\nu\left(s-\dfrac{k}{2}\right)\Gamma_n \left(s+\dfrac{1}{2} \right)} \cdot \dfrac{\Gamma\left( s + \dfrac{1}{2} \right)}{\Gamma\left(s-\dfrac{\nu-1}{2} \right)}  \\  & \quad \times \prod_{i=1}^{[\nu/2]} \frac{\Gamma(2s-i)}{\Gamma \left(2s+i- \dfrac{2\nu+1}{2} \right)},
\end{align*}
\begin{align*} Z_\nu(s) & = \xi(\chi_p, 2s-\nu) \prod_{j=1}^{[\nu/2]}\xi(4s+2j-2\nu-1) \prod_{j=[\nu/2]+1}^{[n/2]} \xi(4s-2j) \\
& \quad \times  (\det Y)^{s-k/2} \zeta^{(n)}_\nu \left(Y, 2s- \frac{\nu+1}{2} \right).
\end{align*}
One can rewrite $Z_\nu(s)$ as (\ref{eq_Z_nu(s)_nu_le_la}) or (\ref{eq_Z_nu(s)_nu_ge_la}) replacing $\xi(n-\nu)$ to $\xi(\chi_p, n-\nu)$; it shows that $Z_\nu(s) = Z_{\la}(\kappa_n-s)$ holds.

Also the term $G_\nu(s)$, by the similar procedure as above we have
\[ G_\nu(s) = \pi^{-\nu(\nu+1)/4} 2^{2s \nu - \nu(\nu+3)/2} \frac{ \Gamma_n\left( s+ \dfrac{k}{2} \right)  \Gamma_\nu \left(s - \dfrac{1}{2} \right) \Gamma_\nu \left( s + \dfrac{1}{2} \right)}{ \Gamma_\nu \left(s+\dfrac{k}{2} \right)\Gamma_\nu \left(s-\dfrac{k}{2} \right)\Gamma_n \left(s + \dfrac{1}{2} \right)}. \]By
\begin{align*} 
& \frac{\Gamma_n\left(s + \dfrac{k}{2} \right)}{\Gamma_\nu \left(s + \dfrac{k}{2} \right)}  =  \pi^{\widetilde{n}-\widetilde{\nu}-\widetilde{\la}} \, \Gamma_\la \left(s -\frac{\nu-1}{2} + \frac{k-1}{2} \right), \\ & \frac{\Gamma_\nu \left(s + \dfrac{1}{2} \right)}{\Gamma_n\left(s + \dfrac{1}{2} \right)}  = \pi^{-\widetilde{n}+ \widetilde{\nu}+\widetilde{\la}} \frac{1}{ \Gamma_\la \left(s - \dfrac{ \nu-1}{2}  \right)},
\end{align*}
the term $G^*_\nu(s) = (\sqrt{-1})^{-\nu(k-1)} 2^{\nu(\nu+3)/2-2 \nu s} \pi^{\nu(\nu+1)/4} G_\nu(s)$ is given by
\[ G^*_\nu(s) = (\sqrt{-1})^{-\nu(k-1)} \frac{\Gamma_\nu\left(s-\dfrac{1}{2} \right)}{\Gamma_\nu \left( s-\dfrac{1}{2} - \dfrac{k-1}{2} \right)} \frac{\Gamma_\la \left(s -\dfrac{\nu-1}{2} + \dfrac{k-1}{2} \right)}{\Gamma_\la \left(s - \dfrac{ \nu-1}{2}  \right)}. \]
Since $k-1$ is even, we apply Lemma \ref{lem_FE_Gamma_function} and get $G^*_\nu(s) = G^*_{\la}(\kappa_n-s)$. This proves the proposition.

\end{proof}

\section{Examples} \label{section_example}

In this section we calculate the example in the case of degree $2$ with quadratic character  $\chi_p$. By Proposition \ref{prop_eigen_Eisenstein_series}  we have
\[ E^2_{k,\chi_p}(w_1;Z,s) = E^{2,(1)}_{k,\chi_p}(Z,s). \]
Let $N \in \Sym^2(\Z)^*$ be positive definite. Then the Fourier coefficient of $E^{2,(1)}(Z,s)$ at $N$ is given by
\[ \det(Y)^{s-k/2} \mathscr{S}^{(1)}_2(\chi_p,N,2s) \xi_2 \left( Y,N, s + \frac{k}{2}, s-\frac{k}{2} \right). \]
By \cite[(4.34K)]{Shi1} we know
\begin{align*}  \xi_2 \left( Y,N, s + \frac{k}{2}, s- \frac{k}{2} \right) & = (-1)^k 2^{2k+2} \pi^{2s+k} \frac{\det(Y)^{k/2-s}}{\Gamma_{2} \left(s+ \dfrac{k}{2} \right)} \\
& \quad \times  \det(N)^{s+k/2-3/2} \w_2 \left(2 \pi Y, N; s+\frac{k}{2}, s-\frac{k}{2} \right). 
\end{align*}
On the other hand $\mathscr{S}_2^{(1)}(\chi_p,N,s)$ is written as follows. Let $D_N$ be the absolute value of the discriminant of the quadratic field $K=\Q(\sqrt{- \det(2N)})$. We write $\det(2N) = D_N \mathfrak{f}^2$ and $f_q = \ord_q \mathfrak{f}$ for each prime $q$. We set $\chi_N$ the quadratic character associated with $K$. that is defined as follows. We have $\chi_N(-1) = -1$ and for a prime number $q$ with $(q,D_N) = 1$, we have $\chi_N(q) = \left( \dfrac{-D_N}{q} \right)$. Here we understand if $q=2$, then
\[ \left( \frac{a}{2} \right)  = \begin{cases} 0 & a \equiv 0 \bmod 2, \\ 1 & a \equiv \pm 1 \bmod 8, \\ -1 & a \equiv \pm 5 \bmod 8. \end{cases} \]
Let $N[\g] = p^m (\alpha, p^t \beta )$ with $\alpha, \beta \in \Z_p^\times$ for some $\g \in \GL_2(\Z_p)$. Then $p \mid D_N$ if and only if $t$ is odd. If $t$ is even then the Dirichlet character $\chi_N \chi_p$ is primitive, on the other hand if $t$ is odd then we have $\chi_N \chi_p = \chi_N' \chi_0$ for a certain primitive character $\chi_N'$. We put $\chi_N^* = \chi_N \chi_p$ or $\chi_N'$ according as $t$ is even or odd respectively.  We put $l_N = f_p-m + \ord_p D_N$, that is $l_N = t/2$ or $(t+1)/2$ according as  $t$ is even or odd respectively. Then by \cite[Hifrssatz 10]{Kau} we can write
\begin{align*} \mathscr{S}_2^{(1)}(\chi_p,N,2s) & = S_2^{(1)}(\chi_p,N,s)_p \frac{L(\chi_N^*, 2s-1)(1-\chi_N^*(p)p^{1-2s})}{L(\chi_p,2s) \zeta(4s-2)(1-p^{2-4s})} \\
& \qquad \times  \prod_{q \ne p} F_N^{(q)}(\chi_p(q)q^{-2s}),
\end{align*}
Here $F_N^{(q)}(X)$ is a certain polynomial in $X$, that equals to $1$ unless $q \mid f_q$.  It satisfies the functional equation
\begin{equation} \label{eq_FE_unramified_Siegel_series_deg_2}
F_N^{(q)}(q^{-3}X^{-1}) = (q^3 X^2)^{-f_q} F_N^{(q)}(X).
\end{equation}
The term $S_2^{(1)}(\chi_p,N,2s)$ is computed in \cite[Proposition 3.2]{Gu}. We have
\begin{align*} S_2^{(1)}(\chi_p,N,2s) & = \ve_p \chi_p(\alpha)p^{(2-2s)m+3/2-2s} \frac{1-p^{2-4s}}{(1-p^{3-4s})(1- \chi_N^*(p) p^{1-2s})} \\
& \quad \times \bigl( 1-p^{(3-4s)l_N} - \chi_N^*(p) p^{1-2s} (1-p^{(3-4s)(l_N-1)}) \bigr).
\end{align*}
Therefore the Fourier coefficient $C(N,s)$ of 
\[ \mathbb{E}^{2,(1)}_{k,\chi_p}(Z,s) = \ve_p p^{-1/2} \frac{\Gamma_2 \left( s + \dfrac{k}{2} \right)}{\Gamma_2 \left( s + \dfrac{\delta_p}{2} \right)} \xi(\chi_p,2s) \xi(4s-2) E^2_{k,\chi_p}(w_1;Z,s) \]
 at $N$ is given by
\begin{align*} & C(N,s)   =  2^{2k+2} \pi^{-s+k+(1-\delta_p)/2} \chi_p(\alpha)  p^{s+(\delta_p-1)/2}  \frac{\Gamma(2s-1)}{\Gamma \left( s+ \dfrac{\delta_p-1}{2} \right) } L(\chi_N^*,2s-1 ) \\
& \times \det(N)^{s-3/2 + k/2}  \w_2 \left(2 \pi Y, N; s+\frac{k}{2}, s-\frac{k}{2} \right)  F_N^{(p)}( p^{-2s}) \prod_{q \ne p} F_N^{(q)}(\chi_p(q)q^{-2s}) \end{align*}
with
\begin{equation}  F_N^{(p)}(p^{-2s}) =  \frac{p^{(2-2s)m+3/2-2s}}{1-p^{3-4s}}\bigl( 1-p^{(3-4s)l_N} - \chi_N^*(p) p^{1-2s} (1-p^{(3-4s)(l_N-1)}) \bigr).  \label{eq_Siegel_series_principal_deg2}
\end{equation}  
We denote the absolute value of the conductor of $\chi_N^*$ by $D_N^*$. Then\[ D_N^* = \begin{cases} p D_N & \text{if $(p, D_N) = 1$,} \\ p^{-1} D_N &  \text{if $p \mid D_N$.} \end{cases} \]
Since $\chi_N^*(-1) = - \chi_p(-1)$ ,we have $\delta_{\chi_N^*} = (1 - \chi_N^*(-1))/2 = 1-\delta_p$.
The completed Dirichlet $L$ function $\xi(\chi_N^*,2s-1)$ is given by
\[ \xi(\chi_N^*,2s-1) = \left( \frac{D_N^*}{\pi} \right)^{s-\delta_p/2} \Gamma \left( s-\dfrac{\delta_p}{2} \right) L(\chi_N^*,2s-1). \]
Since we can show that the Gauss sum of $\chi_N^*$ coincides with $(\sqrt{-1})^{1-\delta_p} \sqrt{D_N^*}$, the functional equation $\xi(\chi_N^*,s) = \xi(\chi_N^*,1-s)$ holds.
For both cases $\delta_p=0$ or $1$ we have
\[ \Gamma\left(s + \frac{\delta_p-1}{2} \right) \Gamma \left( s - \frac{\delta_p}{2} \right) =  \Gamma(s) \Gamma \left( s - \frac{1}{2} \right) = 2^{2-2s} \pi^{1/2} \Gamma(2s-1), \]
thus we can write
\begin{align*} 
& C(N,s)  = \chi_p(\alpha)2^{k+3} \pi^{k-\delta_p}p^{s+(\delta_p-1)/2} (D_N^*)^{-s+\delta_p/2} \det(2N)^{s-3/2+k/2}  \\
& \quad \times \xi(\chi_N^*,2s-1)  \w_2 \left(2 \pi Y, N; s+\frac{k}{2}, s-\frac{k}{2} \right)  F_N^{(p)}( p^{-2s}) \prod_{q \ne p} F_N^{(q)}(\chi_p(q)q^{-2s}).
\end{align*}
Let $d = \ord_p D_N \in \{ 0, 1 \}$. Then
$\ord_p \det(2N) = 2 f_p + b$ and  $D_N^* = D_N' p^{1-d}$ with $(D_N',p) = 1$. Since $\det(2N) = p^{2f_p + d} D_N' \prod_{q \ne p} q^{2f_q}$ we can write by using a constant $A$ that is independent of $s$,
\begin{align*}
C(N,s) & = A \, p^{2(f_p + d)s}  \xi(\chi_N^*,2s-1)   \w_2 \left(2 \pi Y, N; s+\frac{k}{2}, s-\frac{k}{2} \right) \\
& \quad \times F_N^{(p)}( p^{-2s}) \prod_{q \ne p} q^{2s f_q } F_N^{(q)}(\chi_p(q)q^{-2s}).
\end{align*}
By \cite[Theorem 4.2]{Shi1} and (\ref{eq_FE_unramified_Siegel_series_deg_2}), \[ \xi(\chi_N^*,2s-1) \w_2 \left(2 \pi Y, N; s+\frac{k}{2}, s-\frac{k}{2} \right)\prod_{q \ne p} q^{2 sf_q } F_N^{(q)}(\chi_p(q)q^{-2s}) \]
is invariant under $s \mapsto 3/2-s$. 
 Note that $2(f_p+d) = 2m + 2l_N$ and one can check frome (\ref{eq_Siegel_series_principal_deg2}) that $p^{2(m+l_N)s} F_N^{(p)} (p^{-2s})$ is also invariant $s \mapsto 3/2-s$, i.e.
\[ F_N^{(p)}(p^{2s-3}) =  p^{(4s-3)(m+l_N)} F_N^{(p)} (p^{-2s}) \]
holds. As a consequence we have $C(N,s) = C(N, 3/2-s)$, that is the evidence of our functional equation.

We remark that the concrete form of the functional equation (Theorem \ref{thm_FE_eigen_function_chi_p}) is formulated so that the constant terms of the Fourier expansion coincide each other. However we have shown the equality of the Fourier coefficients at $N>0$.

\end{document}